\documentclass[11pt]{amsart}

\usepackage[margin=1.2in]{geometry}

\usepackage[font=small,labelfont=bf,
justification=justified,
format=plain]{caption}

\usepackage{verbatim}

\usepackage[shortlabels]{enumitem}
\setlist{leftmargin=1.4em, itemsep=2pt, topsep=3pt, parsep=0pt}

\usepackage{amsmath,amssymb,amsthm,graphicx,enumitem,subcaption}
\usepackage{wrapfig}
\usepackage{adjustbox}
\usepackage{caption}

\usepackage[dvipsnames]{xcolor}

\definecolor{darkgreen}{rgb}{0,0.50,0}
\definecolor{darkred}{rgb}{0.55,0,0}
\definecolor{darkblue}{rgb}{0,0,0.6}
\definecolor{darkteal}{rgb}{0.0,0.25,0.5}

\usepackage[pdfborder=0,pagebackref,colorlinks,
citecolor=darkgreen,
linkcolor=darkteal,
urlcolor=darkblue]{hyperref}

\usepackage{tikz,tikz-cd}

\usetikzlibrary{
matrix,
arrows,
decorations.pathmorphing,
cd,
patterns,
calc,
backgrounds,
shapes.geometric
}

\tikzset{dummy/.style={circle,fill,draw,inner sep=0pt,minimum size=1.2mm}}
\tikzset{vertex/.style={fill,circle,minimum size=.1cm,inner sep=0pt}}

\usepackage[capitalise]{cleveref}

\crefname{convention}{Convention}{Convention}
\crefname{lemma}{Lemma}{Lemmas}
\crefname{theorem}{Theorem}{Theorems}
\crefname{definition}{Definition}{Definitions}
\crefname{proposition}{Proposition}{Propositions}
\crefname{remark}{Remark}{Remarks}
\crefname{corollary}{Corollary}{Corollaries}
\crefname{equation}{Equation}{Equations}

\theoremstyle{plain}

\newtheorem*{theorem*}{Theorem}

\newtheorem{theorem}{Theorem}[section]
\newtheorem{lemma}[theorem]{Lemma}
\newtheorem{corollary}[theorem]{Corollary}
\newtheorem{proposition}[theorem]{Proposition}

\theoremstyle{definition}

\newtheorem{notation}{Notation}
\newtheorem{construction}[theorem]{Construction}
\newtheorem{algorithm}[theorem]{Algorithm}
\newtheorem{definition}[theorem]{Definition}
\newtheorem{example}[theorem]{Example}

\newtheorem{remark}[theorem]{Remark}

\newtheorem{convention}[theorem]{Convention}

\numberwithin{equation}{section}

\newcommand{\End}{\mathrm{End}}
\newcommand{\TFS}{\mathbf{TFS}}
\newcommand{\NF}{\mathrm{NF}}

\newcommand{\id}{{\mathrm{id}}}

\newcommand{\RR}{\mathbb{R}}

\usepackage[textwidth=25mm, textsize=tiny]{todonotes}

\title[A normal form for the Thin Flat Surfaces category]
{A normal form for the Thin Flat Surfaces category}

\author[Herrera]{Jaime Herrera}

\address{
Department of Mathematics and Statistics\\
Loyola University Chicago\\
1032 W. Sheridan Road\\
Chicago, IL 60660, USA
}

\email{jherrera19@luc.edu}

\author[Rovi]{Carmen Rovi}

\address{
Department of Mathematics and Statistics\\
Loyola University Chicago\\
1032 W. Sheridan Road\\
Chicago, IL 60660, USA
}

\email{crovi@luc.edu}

\keywords{Thin flat surfaces,
cobordism categories,
Normal form, sufficiency of relations
}

\subjclass[2020]{
57R90,
55N22,
18F99,
57K16
}

\begin{document}

\begin{abstract}
The category $\TFS$ of thin flat surfaces, introduced by Khovanov, Qi, and
Rozansky, is a strict symmetric monoidal category whose morphisms are compact
oriented surfaces with corners arising as neighbourhoods of immersed graphs in
a strip. We prove two main results. First, a \emph{normal form theorem}: every
connected viewable tf-cobordism is equal, in $\TFS$, to a canonical composite
determined by its topological type. Second, a \emph{sufficiency theorem}: we establish a list of relations in the $\TFS$ category which are sufficient, meaning that any two composites of generators
representing the same tf-cobordism are related by a finite sequence of the
listed relations. Together these results give a complete generators-and-relations
presentation of $\TFS$.  
\end{abstract}

\maketitle
\tableofcontents

\maketitle

\section{Introduction}

The category $\TFS$ of thin flat surfaces was introduced by Khovanov, Qi, and
Rozansky~\cite{TFS} as the categorical framework in which the evaluation of
link invariants via two-dimensional cobordisms takes place. The objects of
$\TFS$ are non-negative integers, representing ordered disjoint unions of
closed intervals. Its morphisms are compact oriented surfaces with corners,
called thin flat surfaces or tf-cobordisms, arising as regular neighbourhoods
of immersed graphs in the strip $\RR \times [0,1]$. The category $\TFS$ is a
strict symmetric monoidal category, and its relevance to Khovanov homology and
related link invariants rests on the possibility of defining symmetric monoidal
functors out of it: an assignment of algebraic data to the generators of $\TFS$
that is consistent with the relations of the category determines such a functor
and hence an invariant of links, \cite{Seong-Khovanov}, \cite{Seong-Khovanov2}. In this setting, tf-cobordisms govern the maps
between homology groups induced by cobordisms between links, and any algebraic
object satisfying the $\TFS$ relations gives a consistent evaluation of those maps.

The $\TFS$ admits a description by generators and
relations. The generators are five basic morphisms: the unit $\iota : 0 \to
1$, the counit $\varepsilon : 1 \to 0$, the multiplication $\mu : 2 \to 1$,
the comultiplication $\Delta : 1 \to 2$, and the permutation $P : 2 \to 2$. (We also refer to permutations as twists throughout).
These generators satisfy a list of relations reflecting the topology of the
surfaces. What, to our knowledge, is not addressed in the current Literature is
whether these relations are \emph{sufficient}: that is, whether any two composites of
generators representing the same tf-cobordism are shown to be related by a finite
sequence of the listed relations. Without sufficiency, one cannot define a
functor out of $\TFS$ simply by specifying images of generators; one must
instead verify well-definedness separately on every morphism, or employ a
different mechanism altogether. The question of sufficiency is the central
question of the present paper.

\medskip

\noindent\textbf{Main results.}

The first main result is the construction of a Normal Form. 
Every connected viewable tf-cobordism $f : n \to m$ is equal, as a morphism in
$\TFS$, to a canonical composite
\begin{equation}
  \NF(f)
  \;:=\;
  \Gamma(c,\, g)
  \;\circ\;
  \bar\rho(f)
  \;\circ\;
  K_p\!\bigl(\lambda_1, \ldots, \lambda_p\bigr)
  \;\circ\;
  \bar\sigma(f).
\end{equation}
determined by the number $p$ of interval cycles, the locally cyclic partition
$\lambda = (\lambda_1, \ldots, \lambda_p)$ of the horizontal boundary
components, and the genus $g$ and number of side boundary circles $c$ of the
surface.

\begin{theorem*}[Sufficiency]
The relations of $\TFS$ are sufficient. Any two composites of the generators
$\iota, \varepsilon, \mu, \Delta, P$ representing diffeomorphic tf-cobordisms
relative to their horizontal boundary are related by a finite sequence of
applications of the listed relations.
\end{theorem*}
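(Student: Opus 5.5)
The plan is to deduce sufficiency from the Normal Form Theorem in the usual ``reduce everything to a canonical representative'' fashion. Write $\approx$ for the equivalence relation on composites (words) of $\iota,\varepsilon,\mu,\Delta,P$ generated by the listed relations, together with the strict symmetric monoidal axioms. It suffices to show that every word $w$ representing a tf-cobordism $f$ satisfies $w\approx \NF(f)$, where $\NF(f)$ is a fixed word depending only on the diffeomorphism class of $f$ relative to its horizontal boundary. Two words representing diffeomorphic cobordisms are then both $\approx \NF(f)$, hence $\approx$ each other. The essential point is that the Normal Form Theorem must be read as a statement about words: its proof rewrites a given composite into $\NF(f)$ using only the listed relations. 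So I would first check, and if necessary restate, that each rewriting step in that proof is an instance of a listed relation, and not merely an equality of morphisms in $\TFS$.

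\textbf{Step 1: reduction to connected pieces.} Given a word $w$, use the symmetric monoidal relations (naturality of $P$, the braid and involutivity relations for $P$, interchange) to slide generators past each other. This separates $w$ into $\sigma'\circ(w_1\otimes\cdots\otimes w_k)\circ\sigma$, where each $w_i$ represents a connected component and $\sigma,\sigma'$ are permutation words. By Coxeter's presentation of the symmetric group, any two permutation words inducing the same permutation are $\approx$. The decomposition is therefore canonical once the components are ordered, say by their smallest source or target label. Components with empty horizontal boundary (closed surfaces) are handled by moving them to a fixed position. They are normalized via the $\Gamma(c,g)$ part, i.e.\ $\varepsilon\circ(\text{handles})\circ\iota$, using that $\varepsilon\circ\iota$ composites commute with everything.

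\textbf{Step 2: each connected word reaches the normal form.} Let $w$ be a connected word, possibly not in viewable shape. I would first use the relations to make it viewable: move all $\iota,\mu$ below all $\Delta,\varepsilon$ by associativity/coassociativity, unit/counit, Frobenius, and the twist relations. Then invoke the Normal Form Theorem's rewriting to reach $\Gamma(c,g)\circ\bar\rho(f)\circ K_p(\lambda)\circ\bar\sigma(f)$.

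The invariants $p$, $\lambda$, $g$, $c$ are read from the diffeomorphism type rel.\ horizontal boundary, so they agree for diffeomorphic $f,f'$. The permutation parts $\bar\rho,\bar\sigma$ are determined by the boundary labelling up to the symmetries of the cyclic partition. I would then show that these residual ambiguities are absorbed by relations. Cyclically rotating the intervals within one interval cycle $\lambda_i$ is realized by the twist-absorption relations $\mu\circ P\approx\mu'$ (the thin-flat version, possibly involving orientation) and their duals. Reordering the cycles $\lambda_1,\dots,\lambda_p$ of equal size is absorbed by symmetry of $K_p$ under block permutations.

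\textbf{Main obstacle.} The hard step is the last one: verifying that all choices made in producing $\NF(f)$ are connected by relations, so that the normal form word is genuinely unique up to $\approx$. This means a canonical labelling of interval cycles, a starting point in each cycle, and the placement of handles versus side circles. Diffeomorphisms of the surface rel.\ horizontal boundary act on these choices through a mapping-class-type group, and each generator of that action must be witnessed by a relation. In the thin flat setting, cyclic rotation within a boundary cycle is the delicate case, since $P$ is not freely absorbable by $\mu$ as in the commutative Frobenius case.

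I would attack it by listing the finitely many elementary moves between choices: rotate a cycle by one, swap adjacent cycles, and exchange a handle with a side circle if those are distinguished. Each move I would verify by an explicit relation chain on small words, e.g.\ $K_1(\lambda)$ with $\lambda$ of size $2$. I would then extend by induction on $|\lambda|$ using (co)associativity.
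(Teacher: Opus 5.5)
Your outer logic is the paper's: reduce every word to one fixed word $\NF(f)$ that depends only on $f$, then compare. But Step~2 assumes the theorem. The normal form statement available beforehand says only that $f$ and $\NF(f)$ are equal as morphisms of $\TFS$, i.e.\ diffeomorphic. That follows from the classification of connected tf-surfaces and says nothing about relations. What you invoke, that an arbitrary word can be rewritten into $\NF(f)$ by the listed relations, is exactly what Section~\ref{Sec:Wild-to-normal} proves. ``Check that each rewriting step is an instance of a listed relation'' presupposes a rewriting procedure you never describe.

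The missing content is the reduction itself:
\begin{itemize}
\item a lower bound \eqref{eq:req-pants-bound} on $\#\mu$, $\#\Delta$ in terms of $a_i, a'_i, c, g, p, v, w$;
\item a procedure (Algorithm~\ref{alg:remove-excess}) that removes excess pants by locating capped composites $\Delta\circ\iota$, $\varepsilon\circ\mu$ and cancelling them with the unit/counit, (co)associativity, Frobenius and saddle relations;
\item isolation of the connectors $b_3$, holes $b_1$ and handles $b_2$ as explicit subwords (Algorithms~\ref{alg:b3-factorization}--\ref{alg:b2-factorization}), which needs the saddle representatives and the equivalence of the representations of $b_2$;
\item transport of the $b_1$'s and $b_2$'s into the coupon, using their commutation with pants, with each other and with $b_3$;
\item a proof that whatever tree pattern the word uses to join the $p$ cycles by connectors is relation-equivalent to the star (the three-cycle lemma plus $R$-moves, Corollary~\ref{cor:tree-order});
\item normalization of each genus-zero single-cycle piece to the output of Algorithm~\ref{alg:cycle-to-gen}.
\end{itemize}
None of this is in your plan, and it is the whole theorem.

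Conversely, the obstacle you single out is mostly not there. Equivalence is relative to the intervals, so all labels are fixed. With the conventions of Notation~\ref{Notation1} (cycles ordered by smallest label, each read from its smallest label, connectors in a star, coupon on the first outgoing interval), $\NF(f)$ is one specific word, and no mapping-class-type group acts on it. Rotating the base point of an interval cycle changes nothing, since a cycle is already a cyclic order, so no relation is needed. There is no twist-absorption relation for $\mu$ in $\TFS$ anyway: $\mu$ and $\mu\circ P$ have the different cycles $(1\,2\,1')$ and $(2\,1\,1')$. Handles and holes are not exchangeable either, since $g$ and $c$ are independent invariants. The one choice that genuinely needs relations, the tree of connectors, is missing from your list of elementary moves.

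Finally, the only concrete rewriting you propose, moving all $\iota,\mu$ below all $\Delta,\varepsilon$, fails whichever way it is read. A connected word with every merge before every split is a disk whose incoming intervals form one block and whose outgoing intervals form one block, so it cannot represent $S_1$, $b_2$ or $b_3$. With all splits before all merges, no connected word represents $\Delta\circ\iota$ or $\varepsilon\circ\mu$. Also, viewability is a property of the surface, not a shape of the word.
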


The four components of the normal form are: an input permutation
$\bar{\sigma}(f)$, which reorders the incoming intervals into their canonical
arrangement; the connected core $K_p(\lambda_1, \ldots, \lambda_p)$, which
assembles $p$ minimal cycle surfaces linked by $(p-1)$ connector morphisms
$b_3$; a topology coupon $\Gamma(c, g)$, which appends $c$ hole morphisms
$b_1$ and $g$ genus morphisms $b_2$; and an output permutation $\bar{\rho}(f)$,
which places the outgoing intervals in their canonical order. Each piece is
defined precisely in Section~\ref{sec:normalform}. The sufficiency theorem is
proved constructively: we give an explicit algorithm that transforms any
composite of generators representing a connected viewable tf-cobordism into
normal form using only the listed relations. Since the normal form is determined
by topological invariants of the surface, any two representations of the same
cobordism reduce to the same normal form and therefore are related by the listed
relations.

\medskip

\noindent\textbf{Consequences.}

\noindent\emph{Complete algebraic presentation.} The normal form and sufficiency
theorem together give a complete generators-and-relations presentation of $\TFS$:
two composites of generators represent the same morphism if and only if they are
related by the listed relations. This is the difference between a list of
identities that are known to hold and a complete presentation of the category.
In practice, equality of morphisms in $\TFS$ can be decided algebraically by
reducing both sides to normal form using the relations, with no appeal to
geometric arguments about surfaces or direct verification of diffeomorphism.

\noindent\emph{Functors from generators and relations.} Once the relations are
known to be sufficient, a symmetric monoidal functor $F : \TFS \to \mathcal{C}$
is determined uniquely by specifying images of $\iota, \varepsilon, \mu, \Delta,
P$ in $\mathcal{C}$, subject only to checking that those images satisfy the
listed relations in $\mathcal{C}$. This is the standard mechanism by which a
cobordism category produces algebraic invariants, and it applies whenever the
target algebra satisfies the $\TFS$ relations. In particular, this includes
non-commutative Frobenius algebras with a non-degenerate trace, a class broader
than the scalar-parametrised evaluations of~\cite{TFS} and potentially relevant
in the categorification of non-semisimple representations, where the algebra of
interest need not arise from the universal construction.

\noindent\emph{Freeness theorem.} The normal form and sufficiency theorem imply
that $\TFS$ is the symmetric monoidal category freely generated by the algebraic
structure defined by its generators and relations. Precisely, $\TFS \cong
\mathrm{Th}(\mathcal{A})$, the theory of symmetric Frobenius algebra objects
with non-commutative multiplication, carrying an additional generator $b_3$
subject to the $\TFS$ relations. This is the $\TFS$ analogue of the central theorem
of Lauda and Pfeiffer~\cite{LaudaPfeiffer2008}, who showed that the open-closed cobordism
category $\mathbf{2Cob}^{\mathrm{ext}}$ is equivalent, as a symmetric monoidal
category, to the theory of knowledgeable Frobenius algebras.

\medskip

\noindent\textbf{Relation to the Open-closed normal form of Lauda and Pfeiffer.}
The category $\TFS$ sits inside $\mathbf{2Cob}^{\mathrm{ext}}$ as a proper
full subcategory on the interval objects. As observed in~\cite[Section~5.2]{TFS},
extending $\TFS$ by allowing circles as objects and as boundary components
recovers the category studied in~\cite{LaudaPfeiffer2008}. A natural question is therefore
whether the normal form and sufficiency theorem of~\cite{LaudaPfeiffer2008} restricts to
$\TFS$ directly. The answer is, as we understand it, no, for three independent structural reasons.

First, the Lauda--Pfeiffer normal form for an interval-to-interval morphism in
$\mathbf{2Cob}^{\mathrm{ext}}$ passes through the circle objects in an
essential way. Its second stage applies a bijection $\Lambda$ that converts
the morphism into one between circle objects before converting back; the
intermediate morphism is not a morphism in $\TFS$, and the normal form, viewed
as a composite, is not an expression in $\TFS$ generators.

Second, the Lauda--Pfeiffer sufficiency proof uses generators and relations
that have no counterpart in $\TFS$. The key step (Step~IV) removes all open
comultiplications using the Cardy condition $\mu \circ \tau \circ \Delta =
\iota \circ \iota^*$, which requires both the zipper $\iota : C \to A$ and the
cozipper $\iota^* : A \to C$. Neither generator exists in $\TFS$. This matters
in particular for the connector morphism $b_3 = \mu \circ P \circ \Delta \in
\End_{\TFS}(1)$: in $\mathbf{2Cob}^{\mathrm{ext}}$ the same surface equals
$\iota \circ \iota^*$, and the Lauda--Pfeiffer proof uses the Cardy condition
to rewrite and eliminate it. In $\TFS$ no such rewrite is available; the
identities $b_3^2 = b_1 \cdot b_3$ and the commutativity of $b_3$ with all
other morphisms (cf.~\cite[Propositions~2.1--2.2]{TFS}) must be handled
directly from the $\TFS$ relations for $\mu$, $P$, and $\Delta$. This is
part of our work in this paper that cannot be extracted from the Lauda--Pfeiffer argument.

\medskip

\noindent\textbf{Comparison with the methods of Khovanov--Qi--Rozansky.}
In~\cite{TFS}, functors out of $\TFS$ and its linearisations are defined by two
mechanisms that do not require knowing the relations to be sufficient. The first
is the \emph{universal construction}: given a multiplicative evaluation $\alpha$,
a functor is built not from images of generators but from a bilinear form defined
on all tf-surfaces simultaneously, with well-definedness guaranteed by the
topological classification of connected viewable morphisms rather than by
generators and relations. The second is the construction of \emph{skein
categories}: the desired relations are imposed by definition in a quotient
category $\mathrm{STFS}_\alpha$ rather than verified from a complete
presentation, and consistency is then checked at the level of the quotient. This
is a strictly weaker condition than sufficiency of the relations for $\TFS$
itself.

Both mechanisms are well-adapted to the goals of~\cite{TFS} and rely on the
topological classification of morphisms. As a consequence, the question of
whether the listed relations are sufficient for $\TFS$ is not raised
in~\cite{TFS}. The present paper addresses this question and establishes the
affirmative answer.

\medskip

\medskip

\noindent\textbf{Organisation.}
Section~\ref{Sec:category-description} recalls the category $\TFS$, its monoidal generators, and
the three fundamental composites $b_1$, $b_2$, $b_3$, together with the complete
list of relations. Section~\ref{sec:normalform} introduces the topological
invariants of tf-cobordisms, defines the building blocks of the normal form, and
states the normal form theorem precisely. Section~\ref{Sec:Wild-to-normal}
establishes the sufficiency theorem by carrying out the explicit reduction
algorithm that transforms any connected viewable tf-cobordism into normal form
using only the listed relations.
\medskip

Throughout, we have included many figures in the hope that they will help
the reader clarify the various algorithms and ideas in this paper. As Lewis
Carroll reminds us:

\begin{quote}
``And what is the use of a book,'' thought Alice, ``without pictures or
conversation?''~\cite[Ch.1]{Alice}.
\end{quote}

\section*{Acknowledgments}  
The first-named author gratefully
acknowledges support from the LUROP Provost Research Fellowship at Loyola University Chicago.
The second-named author gratefully
acknowledges support from the College of Arts and Sciences at Loyola University Chicago for summer
research funding, as well as the AMS and Simons Foundation for support through an AMS-Simons PUI
grant.
Both authors would like to thank You Qi and Rafael Gonz\'alez D'Le\'on for helpful and motivating conversations. 
\section{The $\TFS$ category}\label{Sec:category-description}

\subsection{The Thin Flat Surface Category}

In this section we briefly review the category of thin flat surfaces ($\TFS$)
introduced in~\cite{TFS}.  Our goal is to establish the notation and conventions
that will be used throughout the remainder of the paper.  We recall the objects,
morphisms, symmetric monoidal structure, and graphical calculus in this category. 

The objects of $\TFS$ are disjoint unions of finitely many closed intervals.  Since
only the number of interval components is relevant up to isomorphism, we denote the
disjoint union of $n$ intervals by the integer $n$.  The morphisms of $\TFS$ are
thin-flat cobordisms, defined as follows.

\begin{definition}
\label{def:tfcob}
A \textbf{thin-flat cobordism} from $n$ to $m$ is an oriented compact surface $S$
with corners whose boundary decomposes into $\partial S = \partial_v S \cup \partial_h S$,
where $\partial_v S$ consists of $n$ incoming boundary intervals and $m$ outgoing
boundary intervals, and every boundary component is subdivided into alternating
horizontal and vertical boundary arcs.  Two such tf-cobordisms are considered
equivalent if they are diffeomorphic relative to the vertical boundary.
\end{definition}

\begin{definition}
\label{def:TFS}
The category $\TFS$ is a symmetric monoidal category with
\begin{enumerate}[(1)]
  \item Objects: disjoint unions of closed intervals, and
  \item Morphisms: thin flat surfaces between them.
\end{enumerate}
The symmetric monoidal structure comes from the permutation morphism $P$, where
$P \circ P = \id \otimes \id$.
\end{definition}

\begin{remark}
Every thin-flat cobordism admits a planar representation as an immersed surface in
$\RR \times [0, 1]$.  Like in~\cite{TFS}, in this paper we identify a tf-cobordism
with its planar representation.  Any self-overlaps of the immersion can be regarded
as purely virtual.  Equivalently, self-overlaps may be drawn by arbitrarily drawing
them as over- or under-crossings without changing the represented morphism.
\end{remark}

\begin{convention}
\label{conv:ltr}
Unlike the conventions of~\cite{TFS}, throughout this paper we draw morphisms from
left to right, with incoming intervals on the left and outgoing intervals on the right.
All figures and interval labels will follow this convention unless otherwise stated.

Each planar representation of a tf-surface inherits an orientation from its immersion
in $\RR \times [0, 1]$.  We label the incoming intervals from top to bottom by
$1, \ldots, n$ and the outgoing intervals from bottom to top by $1', \ldots, m'$.
\end{convention}

By drawing our diagrams from left to right, cobordisms have incoming interval
boundaries on the left and outgoing interval boundaries on the right.  

\begin{definition}
\label{def:intcycle}
The \textbf{interval cycle} of a boundary component is the cyclic ordering of the
vertical intervals encountered while traversing that boundary component according to
its induced orientation.  For the planar diagrams of this paper, the induced
orientation on each boundary component corresponds to a counterclockwise traversal.
\end{definition}

\begin{example}
\label{ex:cycle}
Consider a connected tf-cobordism whose unique boundary component has interval cycle
\[
  (1\ 2\ 1'\ 2').
\]
This indicates that, following the induced orientation, one encounters the incoming
intervals $1$ and $2$, followed by the outgoing intervals $1'$ and $2'$, before
returning to the starting point.  Since the ordering is cyclic, any cyclic rotation,
such as $(2\ 1'\ 2'\ 1)$, represents the same interval cycle.
\end{example}

\subsection{Generators and relations of the $\TFS$ category}
\label{sec:genrel}

The $\TFS$ category can be completely described in terms of generators and relations.
Figure~\ref{Fig:generators} illustrates a complete set of generators for $\TFS$.

\begin{center}
  \includegraphics[width=0.68\textwidth]{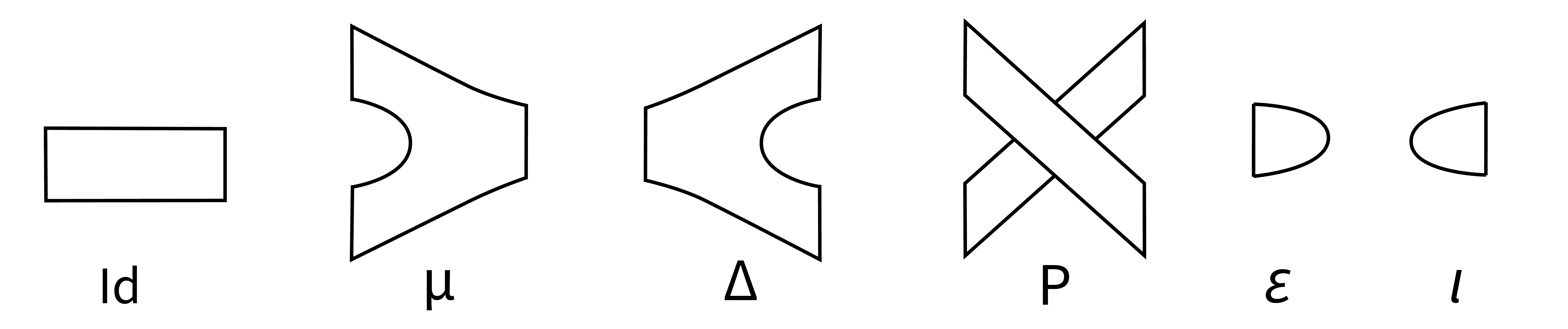}
  \captionof{figure}{Generators of the $\TFS$ category.  From left to right: Identity,
  multiplication $\mu$, comultiplication $\Delta$, permutation $P$, counit $\epsilon$,
  unit $\iota$.\label{Fig:generators}}
\end{center}

In the rest of this section, we discuss important composites and relations in the category.
\medskip

\begin{proposition}\label{Prop:relations1-10}
    The relations in Figure \ref{Fig:relations} are sufficient relations in the $\TFS$ category. 
    \begin{center}
  \includegraphics[width=0.99\textwidth]{Normal/Relations.pdf}
  \captionof{figure}{Sufficient relations in the $\TFS$ category.\label{Fig:relations}}

\end{center}

\end{proposition}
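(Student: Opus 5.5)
The plan is to prove sufficiency by reducing every word in the generators to a canonical word depending only on the diffeomorphism class of the tf-cobordism. Let $w$ be a composite of $\iota,\varepsilon,\mu,\Delta,P$ and identities, assembled with $\circ$ and $\otimes$, representing $S:n\to m$. First I will use the symmetric monoidal relations (naturality of $P$ with respect to every generator, $P\circ P=\id\otimes\id$, and the braid/hexagon relations in Figure~\ref{Fig:relations}) to separate $w$ into its connected components. Concretely, I will slide twists through the generators until $w$ equals $\pi_{\mathrm{out}}\circ(w_1\otimes\cdots\otimes w_k)\circ\pi_{\mathrm{in}}$. Here each $w_i$ represents a connected component and $\pi_{\mathrm{in}},\pi_{\mathrm{out}}$ are permutations written in $P$. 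Closed components with no intervals are scalars in $\End(0)$; these can be moved freely and handled separately. The permutations are normalised by the standard Coxeter presentation of $S_n$, which the twist relations realise. It then suffices to treat connected $S$.

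For connected $S$, the topological type relative to the vertical boundary is determined by four invariants: the number $p$ of boundary components meeting intervals, the cyclic interval orders $\lambda_1,\dots,\lambda_p$ on them, the number $c$ of boundary circles without intervals, and the genus $g$. These are the data on which a normal form $\NF(S)=\Gamma(c,g)\circ\bar\rho\circ K_p(\lambda)\circ\bar\sigma$ depends. I will reduce $w$ to this form in three stages.

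First, I will use (co)associativity, the (co)unit laws, and the Frobenius relations to push all $\mu$'s to the left of all $\Delta$'s, with twists interspersed. This gives a word of the form (tree of $\Delta$'s) $\circ$ (permutation and loops) $\circ$ (tree of $\mu$'s).

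Second, I will show that every middle segment of the form $\mu\circ(\text{twists})\circ\Delta$ reduces to a product of $b_1,b_2,b_3$ in $\End(1)$. This uses the relations expressing that $b_3$ is central and that $b_3^2=b_1b_3$, together with the relations for $b_1$ and $b_2$. Each of these elements can then be collected into the coupon $\Gamma(c,g)$.

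Third, I will check that the exponents collected this way equal $c$, $g$ and $p-1$. The check compares Euler characteristics and counts boundary components, since each relation preserves the surface. The words that remain are exactly the minimal cycle surfaces realising the cyclic orders $\lambda_i$, up to cyclic rotation, and rotation is absorbed by the relation $\varepsilon\circ\mu\circ P=\varepsilon\circ\mu$ (symmetry of the trace).

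The main obstacle is the second stage. A segment $\mu\circ\sigma\circ\Delta$ with a nontrivial permutation $\sigma$ can create genus, holes, or connectors depending on how $\sigma$ interleaves the strands. The relations must then determine which of these it is, without recourse to Cardy-type rewrites. My first approach is induction on the length of $\sigma$ in Coxeter generators. At each step I will use naturality of $P$ to move one crossing adjacent to a $\mu$ or a $\Delta$, and then apply one of the local identities $\mu\circ P\circ\Delta=b_3$, the genus identity, or the hole identity, tracking the effect on $(p,c,g)$. Uniqueness then follows: two words for diffeomorphic surfaces share the same invariants, hence reduce to the same $\NF$, so they are related by the listed relations.
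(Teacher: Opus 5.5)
\textbf{There is a genuine gap: your first stage cannot be carried out, and the later stages depend on it.}

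Suppose a connected composite has every $\mu$ applied before every $\Delta$, with only permutations and endomorphisms of $1$ in between. Then it has the form $\pi'\circ\Delta^{(m)}\circ e\circ\mu^{(n)}\circ\pi$ with $e\in\End_{\TFS}(1)$ connected. The boundary arcs through the crotches of the $\mu$-tree carry no other interval. So all incoming intervals are consecutive in a single cycle, and likewise all outgoing intervals. Moreover, a connected $e$ has interval cycles $(1~1')$ or $(1)(1')$, so the composite has at most two interval cycles.

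Three kinds of surface therefore have no representative of your shape:
\begin{itemize}
\item the saddle $S_1$, with cycle $(1~1'~2~2')$;
\item two identity strands joined by a connector, with cycles $(1~1')(2~2')$;
\item any surface with $p\ge 3$.
\end{itemize}
This is exactly Remark~\ref{saddle remark}: in a saddle the $\Delta$ and $\mu$ share a leg across a twist, so the Frobenius relation cannot pass one through the other. A Kock-style normal form ($\mu$-tree, loops, $\Delta$-tree) works for commutative $\mathbf{2Cob}$, but not here. Your own target $K_p(\lambda)$ contains saddles and connectors distributed between cycle surfaces, so it is not of the shape Stage 1 produces. Your Stage 3 count would also fail: $p-1$ connectors in one middle factor give $b_3^{p-1}=b_1^{p-2}b_3$, which has two interval cycles and $p-2$ extra holes.

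\textbf{Stage 2 is therefore aimed at the wrong residue.} The pieces trapped between $\Delta$'s and $\mu$'s are saddles $2\to 2$, not elements of $\End(1)$, and they do not reduce to products of $b_1,b_2,b_3$.

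The connectors also cannot be collected into the coupon. Grant that $b_3$ commutes with endomorphisms of $1$; the paper has to derive this (Lemma~\ref{lem: b1, b2, b3 commutativity}) rather than read it off relations 1--10. Even so, $b_3$ does not slide through pants: $\mu\circ(b_3\otimes\id)$ has cycles $(1)(2~1')$, while $b_3\circ\mu$ has $(1~2)(1')$. So the $p-1$ copies must stay between the cycle surfaces. This is why the paper needs the connector relations of Lemma~\ref{lem: b3 connector relations} and the tree moves of Corollary~\ref{cor:tree-order}.

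\textbf{What is missing is the paper's actual mechanism.} It keeps saddles as the irreducible local pieces and works as follows:
\begin{itemize}
\item normalise saddles with relations 11--12 (Proposition~\ref{saddle relations}, Corollary~\ref{cor: saddle representations});
\item remove excess pants case by case (Algorithm~\ref{alg:remove-excess});
\item factor $b_1$, $b_2$, $b_3$ out of saddle configurations;
\item arrange the connectors via $R$-moves;
\item normalise each minimal cycle (Algorithm~\ref{alg:minimal-cycle}).
\end{itemize}
Your induction on the Coxeter length of $\sigma$ correctly names this obstacle but gives no mechanism for overcoming it.
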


In Section~\ref{Sec:Wild-to-normal} we show that these relations are indeed sufficient.
Our strategy is to show that the morphisms of the $\TFS$ category admit a normal form
determined by topological invariants, and subsequently prove that the defining relations
are sufficient to reduce any surface to this normal form.

In the remainder of this section, we will define some additional relations and establish terminology. While the additional relations can be deduced from the ones in Proposition \ref{Prop:relations1-10}, the additional relations are useful to simplify the discussion in Section \ref{Sec:Wild-to-normal}.

Relations 11 and 12 in Proposition \ref{saddle relations} pertain to saddles; see Figure \ref{Fig:saddle-thin-and-real} for the thin flat surface representation of a saddle.

\begin{center}
  \includegraphics[width=0.65\textwidth]{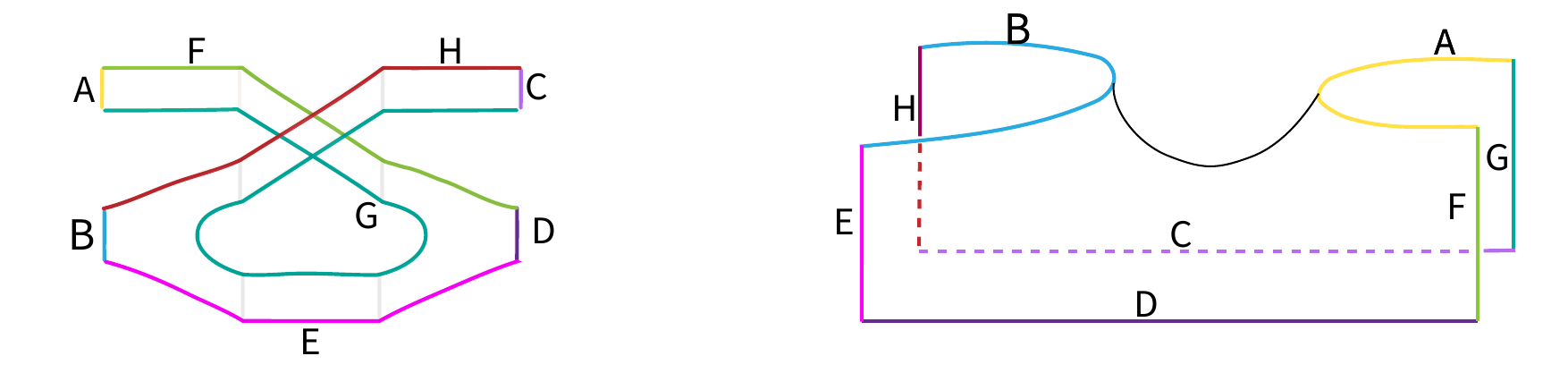}
  \captionof{figure}{Left: Saddle in terms of generators of the $\TFS$ category.
  Right: a saddle on a surface.  %
  \label{Fig:saddle-thin-and-real}}
\end{center}

Furthermore, we define the two fundamental saddle composites $S_1$ and $S_2$ (Figure~\ref{fig:S1/S2}) in this category.
\vspace{-10pt}
\begin{center}
    \includegraphics[width=0.4\textwidth]{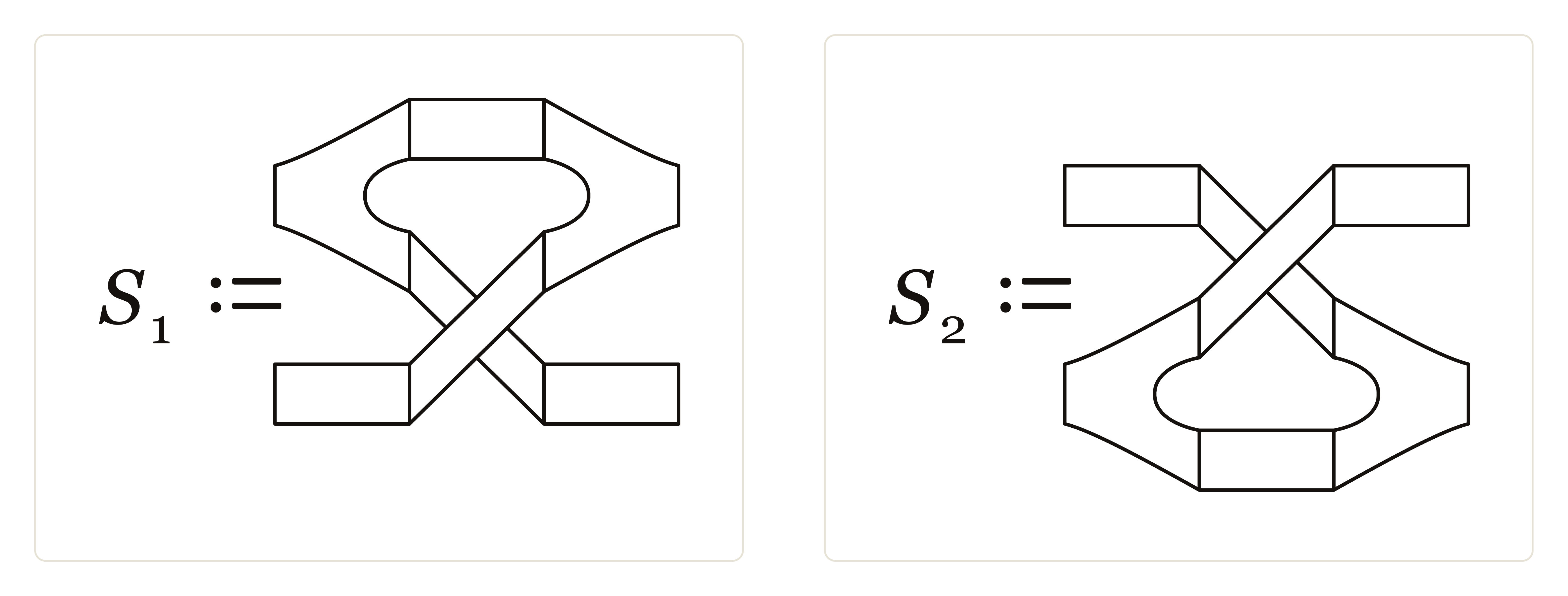}
    \captionof{figure}{Left: The $S_1$ saddle with the interval cycle ($1~1'~2~2'$). Right: The $S_2$ saddle with the interval cycle ($1~2'~2~1'$).}
    \label{fig:S1/S2}
\end{center}

\begin{remark} \label{saddle remark}
    Notice that the $\mu$'s and $\Delta$'s in $S_1$ and $S_2$ cannot be pulled past each other because they are connected via the same leg (the top legs in $S_1$ and the bottom legs in $S_2$) while non-connected legs are permuted past each other. This fact imposes restrictions on the representations of $S_1$ and $S_2$, which we discuss further in Corollary \ref{cor: saddle representations}.
\end{remark}

\begin{proposition}
\label{saddle relations}
    Relations 11 and 12 shown in Figure \ref{Fig:Relations11-12}  hold in the $\TFS$ category involving the saddles $S_1$ and $S_2$ and the permutation P:  
\begin{center}
\includegraphics[width=0.8\textwidth]{Normal/Relations11-12.pdf}
    \captionof{figure}{Relations 11 and 12:  $S_1 \circ P = S_2 = P\circ S_1$, and
        $S_2 \circ P = S_1 = P\circ S_2$. }\label{Fig:Relations11-12}
\end{center}
\end{proposition}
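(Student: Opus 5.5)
We need to prove $S_1\circ P = S_2 = P\circ S_1$ and $S_2\circ P = S_1 = P\circ S_2$. We will derive these algebraically from the relations of Figure~\ref{Fig:relations}, and check them topologically via interval cycles.

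The saddles are $S_1 = \Delta\circ\mu$ and $S_2$, the variant in which the multiplication and comultiplication share the bottom legs (Remark~\ref{saddle remark}). Concretely, $S_2$ is obtained from $\Delta\circ\mu$ by conjugating with twists so that the connected legs sit at the bottom. The plan is to write $S_2$ explicitly in terms of $\mu$, $\Delta$ and $P$, and then use the relations of Figure~\ref{Fig:relations}.

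The relations we need are:
\begin{itemize}
\item naturality of the twist with respect to $\mu$ and $\Delta$, i.e. sliding a $\mu$ or $\Delta$ through a crossing;
\item the involutivity $P\circ P=\id\otimes\id$;
\item the symmetric-Frobenius-type relations governing how $P$ interacts with $\mu$ and $\Delta$.
\end{itemize}
$\mu$ is not commutative, so $\mu\circ P\neq\mu$. Hence the identity must come from the Frobenius relation together with cyclic symmetry of the trace, not from commutativity.

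\textbf{Step 1: $S_1\circ P = S_2$.} Precomposing $\Delta\circ\mu$ with $P$ gives $\Delta\circ\mu\circ P$. Using the Frobenius relation $\Delta\circ\mu = (\id\otimes\mu)\circ(\Delta\otimes\id)$, rewrite $S_1$ as a "zigzag" composite. Then slide $P$ through the $\Delta\otimes\id$ layer using naturality. This moves the connected leg from the top to the bottom. Reassembling with the other Frobenius form $(\mu\otimes\id)\circ(\id\otimes\Delta)$ yields the defining expression of $S_2$.

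\textbf{Step 2: the remaining equalities.} The equality $P\circ S_1=S_2$ follows by the mirror argument, with $P$ slid through the output side. The equalities $S_2\circ P=S_1=P\circ S_2$ then follow formally: for instance $S_2\circ P = S_1\circ P\circ P = S_1$ by $P\circ P=\id\otimes\id$, and similarly on the other side.

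\textbf{Topological check.} As a consistency check, compare interval cycles. $S_1$ has cycle $(1\,1'\,2\,2')$. Precomposing with $P$ relabels the incoming $1\leftrightarrow 2$, giving $(2\,1'\,1\,2')$. This is the cyclic rotation $(1\,2'\,2\,1')$, the cycle of $S_2$. Both surfaces are connected of genus $0$ with one boundary component, so they are diffeomorphic rel vertical boundary. This confirms the identity in $\TFS$, even before the algebraic derivation.

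\textbf{Main obstacle.} The main obstacle is Step 1: getting the twist past the $\mu$ without commutativity. The twist genuinely changes which legs are joined, and this must be matched exactly by the Frobenius rewriting. I would first try doing it pictorially, tracking legs, and only then transcribe it into the relations of Figure~\ref{Fig:relations}.
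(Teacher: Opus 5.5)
There is a genuine gap, and it starts with the definition of the saddles. $S_1$ is not $\Delta\circ\mu$. That composite is a planar disk with interval cycle $(1~2~1'~2')$, in which the two incoming intervals are adjacent, whereas $S_1$ has the alternating cycle $(1~1'~2~2')$. As Remark~\ref{saddle remark} says, in a saddle the $\Delta$ comes first and one of its legs is glued to the \emph{same} leg of the $\mu$, while the other two legs cross. For example, $S_1=(\mu\otimes\id)\circ(\id\otimes P)\circ(\Delta\otimes\id)$ and $S_2=(\id\otimes\mu)\circ(P\otimes\id)\circ(\id\otimes\Delta)$. The Frobenius relation only handles the crossing-free ``different leg'' gluing, so it cannot turn a saddle into $\Delta\circ\mu$. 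With your definition the statement is in fact false: $\Delta\circ\mu\circ P$ and $P\circ\Delta\circ\mu$ have cycles $(1~1'~2'~2)$ and $(1~2~2'~1')$, which differ, and neither is a saddle cycle. Your topological check silently uses the correct cycle $(1~1'~2~2')$, so it contradicts your algebraic set-up. By itself the check only shows equality of surfaces. The paper needs relations 11--12 to be \emph{consequences} of relations 1--10, because they are later used as moves in the sufficiency algorithm. Your Step~2, deducing the other equalities from $P\circ P=\id\otimes\id$, is fine.

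Even with the correct $S_1$, Step~1 cannot succeed as sketched. It uses only the Frobenius relation, naturality of $P$ and $P\circ P=\id\otimes\id$, and all of these hold for any Frobenius algebra object in a symmetric monoidal category, where $S_1\circ P=S_2$ fails. On elements, with $\Delta(1)=\sum_i e_i\otimes e^i$, one gets $S_1\circ P(x\otimes 1)=\sum_i e_ix\otimes e^i$ and $S_2(x\otimes 1)=\sum_i e_i\otimes xe^i$. Applying $\id\otimes\varepsilon$ yields $x$ versus $\nu(x)$, where $\nu$ is the Nakayama automorphism. So the symmetry of the trace, equivalently of the copairing $P\circ\Delta\circ\iota=\Delta\circ\iota$, must actually be used; you mention it but never invoke it. A working algebraic route is as follows:
\begin{itemize}
\item Use the unit and Frobenius relations to write $\Delta=(\mu\otimes\id)\circ(\id\otimes(\Delta\circ\iota))$. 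This reduces the claim to $\sum_i e_ix\otimes e^i=\sum_i e_i\otimes xe^i$.
\item Prove that identity by applying symmetry of the copairing, then naturality of $P$, then the bimodule identity $\sum_i xe_i\otimes e^i=\sum_i e_i\otimes e^ix$ (the other Frobenius form), then naturality and symmetry again.
\end{itemize}
The paper instead exhibits an explicit chain of moves by relations 1--10 (Figure~\ref{Fig:saddle-rep-sequence}) that transforms one saddle representation into the other.
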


\begin{proof}
    See Figure \ref{Fig:saddle-rep-sequence}.
\end{proof}

\begin{corollary}
\label{cor: saddle representations}
    After reducing to the minimal number of pants (one $\mu$ and one $\Delta$) and quotienting out the equivalence in Proposition \ref{saddle relations}, only four representations each of $S_1$ and $S_2$ exist. These are shown in Figure \ref{Fig:S1/S2 representations}:
\begin{center}
    \includegraphics[width=0.7\textwidth]{Normal/Eight-saddles.pdf}
    \captionof{figure}{}
    \label{Fig:S1/S2 representations}
\end{center}
\end{corollary}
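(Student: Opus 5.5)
The plan is to reduce the classification of representations of $S_1$ and $S_2$ to a finite combinatorial enumeration. A saddle is connected with Euler characteristic $-1$ and boundary data $(2\to 2)$. Any minimal representation therefore uses exactly one $\mu$ and one $\Delta$, and no $\iota$ or $\varepsilon$: units and counits would have to be cancelled against pants via the unit and counit relations in Figure~\ref{Fig:relations} to reach the minimal count. What remains is permutations $P$ placed before, between, and after the two pants.

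The first step is to fix the order of the two pants. If $\mu$ came before $\Delta$, the composite $1\to 1$ part would factor through a single interval. So the minimal expression must have the form
\[
\pi_3 \circ (\text{stuff}) \circ \pi_1,
\]
with either $\Delta$ first (on one wire, alongside an identity) and then $\mu$ on two of the three resulting wires, or $\mu$ first and then $\Delta$. A saddle $2\to 2$ with one $\mu$ and one $\Delta$ has two possible shapes:
\begin{enumerate}[(a)]
\item $\mu\circ\Delta$ sandwiched, giving $(\Delta\otimes\id)\circ\ldots$ type composites;
\item $(\id\otimes\mu)\circ(\Delta\otimes\id)$ type composites.
\end{enumerate}
Shape (a) has three intermediate wires. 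In each shape I will list where $\mu$ can attach: to the left leg or right leg of $\Delta$ together with the free wire, and on which side. I will use naturality and interchange of $P$ with non-connected legs, as in Remark~\ref{saddle remark}, together with $P\circ P=\id$, to slide every remaining permutation to the outer ends $\pi_1,\pi_3\in\{\id,P\}$.

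Next I quotient by Proposition~\ref{saddle relations}. Since $S_1\circ P=S_2=P\circ S_1$, an outer $P$ on either side just toggles between $S_1$ and $S_2$. So within a fixed saddle type the outer permutations are determined, and only the internal attachment pattern remains. There are two choices of which $\Delta$ leg meets $\mu$ and two choices of whether the free wire enters from above or below. This yields $2\times 2=4$ patterns.

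I then compute the interval cycle of each pattern and check it against $(1\,1'\,2\,2')$ or $(1\,2'\,2\,1')$. Each of the four patterns, after possibly one outer $P$, realizes each saddle type exactly once, giving four representations each. These are the ones displayed in Figure~\ref{Fig:S1/S2 representations}.

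The main obstacle is the distinctness and exhaustiveness claim. Exhaustiveness needs the sliding argument to be careful: a permutation between a $\Delta$ leg and the $\mu$ leg attached to it cannot be slid away, by Remark~\ref{saddle remark}. That is precisely why the four patterns persist rather than collapsing. Distinctness should be argued at the level of diagrams, meaning the reduced words, not the morphisms, which are of course all equal. I would simply verify that no further application of $P^2=\id$ or the non-connected-leg interchange maps one pattern to another.
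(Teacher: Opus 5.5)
The main gap is that you never derive, from the listed relations, that the four representations of $S_1$ (resp.\ $S_2$) are equivalent. You set this aside with ``the morphisms, which are of course all equal.'' In a sufficiency argument that equality is exactly what cannot be taken for granted. It is also what the corollary is used for later: the $b_3$- and $b_1$-factorization algorithms invoke it to \emph{reduce} an arbitrary minimal saddle to $S_1$ or $S_2$ by relations.

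The paper's proof consists mostly of this derivation:
\begin{itemize}
\item For $S_{1a}$: write $S_1=P\circ S_2$ by Proposition~\ref{saddle relations}, with the twist on the right, and apply relation~8 to slide the top leg of $\Delta$ under the $\mu$.
\item For $S_{1b}$: do the mirror-image move on the left.
\item For $S_{1c}$: use relation~1 to pull the bottom legs of $\mu$ and $\Delta$ downward. This yields $S_1$ with one pre-composed and one post-composed twist, which Proposition~\ref{saddle relations} collapses to $S_1$.
\end{itemize}
Nothing in your plan produces these derivations. The distinctness check you propose points the wrong way. Once outer twists are traded via Proposition~\ref{saddle relations}, strand-sliding moves of the kind you intend to use (relations~8 and~1) are precisely what carry one representation to another. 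So ``no further move relates them'' is not the right target.

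The enumeration you do focus on is also only sketched.
\begin{itemize}
\item Your paragraph on shapes contradicts itself. You exclude $\mu$ before $\Delta$ and then list it as an option. Also, ``$\mu\circ\Delta$ sandwiched'' is a $1\to 1$ composite, and it is the $\Delta$-first shape, not shape (a), that has three intermediate wires.
\item The interval-cycle computations are promised rather than performed. These are what would sort your four attachment patterns into exactly one $S_1$- and one $S_2$-representative each.
\item The unslidable crossing is misdescribed. In $S_1$ and $S_2$ it lies between the two \emph{non-connected} legs (Remark~\ref{saddle remark}), not between a $\Delta$ leg and the $\mu$ leg glued to it.
\end{itemize}
The paper's count is more direct: $\Delta$ at the top or bottom of the left side, and $\mu$ at the top or bottom of the right side. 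You would need to justify that your parametrization (which leg of $\Delta$ meets $\mu$, free wire above or below) produces exactly these four configurations for each saddle type.
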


\begin{proof}
    First, notice that the $\mu$ and $\Delta$ of a saddle can either be on the top or the bottom of its respective side, resulting in a total of four possible $\mu$, $\Delta$ configurations. The four representations of each saddle exhaust all four of these possibilities. To go from $S_1$ to $S_{1a}$, apply the equivalence in Proposition \ref{saddle relations} and place the twist on the right side. Then, using relation 8, pull the top leg of the $\Delta$ down such that it goes under the $\mu$. To go from $S_1$ to $S_{1b}$, we perform the analogous procedure on the left side. To go from $S_{1c}$ to $S_1$, using relation 1, pull the bottom legs of the $\mu$ and $\Delta$ downward. This will give $S_1$ with one pre-composed and one post-composed permutation. By Proposition \ref{saddle relations}, we reduce to $S_1$. An analogous argument applies to prove $S_2$, $S_{2a}$, $S_{2b}$, and $S_{2c}$ are equivalent.
\end{proof}

Like~\cite{TFS}, we define three fundamental composites $b_1$, $b_2$, and $b_3$
diagrammatically in Figures~\ref{Fig:b1-hole}, \ref{Fig:b2-genus},
and~\ref{Fig:b3-connector}.

The first, $b_1$, is the ``hole'' tf-cobordism which creates an additional boundary
component without any intervals.  This can be viewed as an empty cycle.

The composite $b_2$ is a genus one morphism $1 \to 1$ without additional boundary components.  

Lastly, we refer to $b_3$ as the ``connector'' morphism, which, as we will see later,
allows connected surfaces to contain multiple interval cycles.

\begin{center}
  \includegraphics[width=0.13\textwidth]{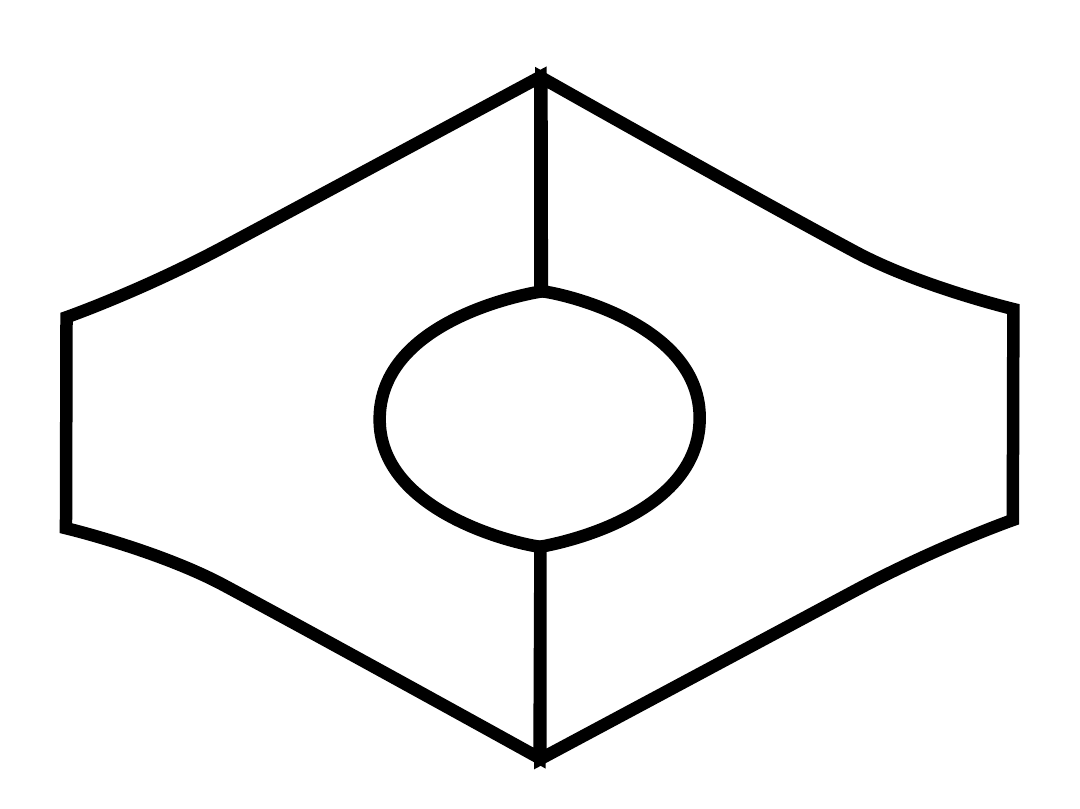}
  \captionof{figure}{Hole ($b_1$) in terms of generators of the $\TFS$ category.\label{Fig:b1-hole}}
\end{center}

\begin{center}
 \includegraphics[width=0.4\textwidth]{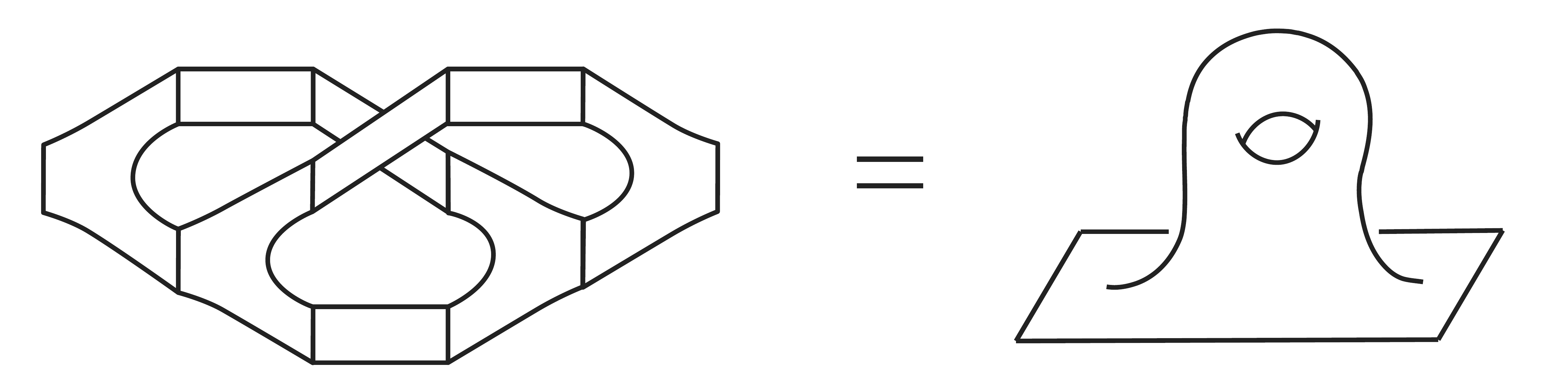}
  \captionof{figure}{Genus ($b_2$) in terms of generators of the $\TFS$ category and as
  a surface.\label{Fig:b2-genus}}
\end{center}

\begin{center}
  \includegraphics[width=0.47\textwidth]{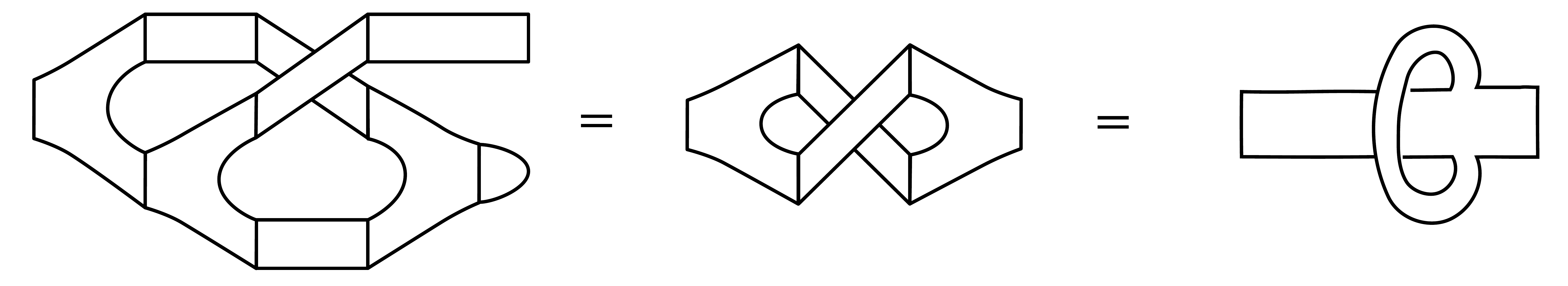}
  \captionof{figure}{The $b_3$ composite (connector) in two equivalent
  representations.\label{Fig:b3-connector}}
\end{center}

\begin{lemma}
\label{$b_2$ relation}
    The two representations of  $b_2$ in Figure \ref{Fig:two-genus} are equivalent.
\begin{center}

\includegraphics[width=0.4\textwidth]{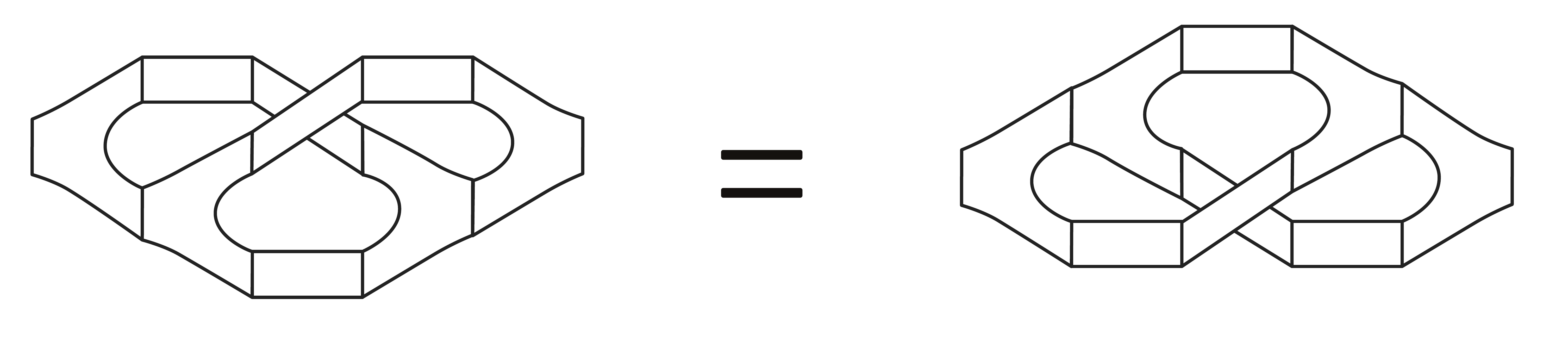}
    \captionof{figure}{}\label{Fig:two-genus}
\end{center}
\end{lemma}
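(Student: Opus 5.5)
The two pictures of $b_2$ in Figure~\ref{Fig:two-genus} cannot be read off from the source, so this plan relies on the standard description. The first representation is $b_2 = \mu \circ \Delta \circ \mu \circ \Delta$ with the two pairs of pants arranged so that the surface is a once-punctured torus with a single boundary circle, i.e.\ no extra hole. Concretely it is a saddle $S_1$ (or $S_2$) closed up by a $\mu$ and a $\Delta$ on either side. The second representation presumably differs by where the twist sits: either $P$ is placed between the inner $\Delta$ and $\mu$, or the roles of the top and bottom legs are exchanged, as in the $S_1$ versus $S_2$ pictures. The proof should therefore be a short chain of rewrites using the relations of Figure~\ref{Fig:relations} and the saddle relations of Proposition~\ref{saddle relations}, in the same style as Corollary~\ref{cor: saddle representations}.

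The first step is to isolate the middle part of each representation as a saddle. In each picture, the composite between the outer $\Delta$ and the outer $\mu$ is a $2\to 2$ morphism. It consists of one $\mu$ followed by one $\Delta$, possibly with one $P$. By Corollary~\ref{cor: saddle representations}, any such composite is one of $S_1, S_{1a}, S_{1b}, S_{1c}$ or $S_2, S_{2a}, S_{2b}, S_{2c}$. It can then be rewritten as the standard $S_1$ or $S_2$ up to pre- and post-composed twists. I will identify which saddle appears in each picture by computing interval cycles as in Figure~\ref{fig:S1/S2}.

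The second step is to compare the two pictures. Using Relations 11 and 12, $S_1\circ P = S_2 = P\circ S_1$, I convert the saddle in the second representation into the saddle in the first, at the cost of one permutation on either the input or the output side. That permutation sits directly after the outer $\Delta$ or directly before the outer $\mu$. I then absorb it using the (co)commutativity-with-twist relations of Figure~\ref{Fig:relations}. If the relations do not include plain commutativity $\mu\circ P=\mu$, which would fail in the non-commutative setting, I instead slide the twist through the outer pant using the naturality relations (relation 8 / relation 1, as used in the proof of Corollary~\ref{cor: saddle representations}). Moving the twist this way turns it into the other saddle type, so the net effect is $P\circ S_1$ composed with a twist, which equals $S_1$ again by Relations 11 and 12.

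The main obstacle is this absorption step. In $\TFS$, $\mu\circ P$ is not $\mu$ as a tf-cobordism, because it changes the interval cycle. So the twist cannot simply be deleted. It has to be pushed around the handle until it meets a second twist and cancels via $P\circ P=\id\otimes\id$. I would first try a direct computation: write both representations with twists, apply Relation 11 twice (once on each side of the saddle), and cancel the resulting pair of twists. As a sanity check, I will confirm that the interval cycle $(1\,1')$ of both surfaces agrees with the claimed equality.
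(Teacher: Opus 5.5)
Your identification of the two pictures is right: they are the handle $\mu\circ S\circ\Delta$ closed over $S_1$ and over $S_2$. This is how the lemma is used in Algorithm~\ref{alg:remove-excess}, to switch the saddle between $(1~1'~2~2')$ and $(1~2'~2~1')$. The gap is the ``absorption'' step. It is not merely hard: with the tools you list it cannot succeed.

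\begin{itemize}
\item Relations 11 and 12, $P\circ P=\id\otimes\id$, and the moves of Corollary~\ref{cor: saddle representations} all act inside the $2\to2$ piece between the outer $\Delta$ and the outer $\mu$. So they preserve that piece as a morphism $2\to2$. But it is $S_1$ in one picture and $S_2$ in the other, and $S_1\neq S_2$ because their interval cycles differ.
\item Concretely, for $\mu\circ P^{a}\circ S_i\circ P^{b}\circ\Delta$ the parity of $i+a+b$ is unchanged by every such move. Applying Relation 11 on both sides and cancelling the twists just returns $P\circ S_1\circ P=S_1$.
\item Nor can the leftover twist be slid ``through the outer pant''. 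It sits on the two legs of the \emph{same} $\Delta$ (or $\mu$). Naturality only moves a pant past a crossing with a different strand, and $P\circ\Delta\neq\Delta$, $\mu\circ P\neq\mu$, as you note yourself.
\end{itemize}

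``Pushing the twist around the handle'' is exactly the content of the lemma, and your proposal supplies no mechanism for it. Checking that both surfaces have interval cycle $(1~1')$ only confirms they are diffeomorphic. That is the premise, not a derivation from the relations.

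The missing idea is a move that breaks the separation between the outer pants and the saddle. You need the relations that let $\mu$'s and $\Delta$'s pass through and re-associate with each other (relations 2, 3, 4 in the paper's usage), applied across that boundary. The paper's proof does this with an explicit, admittedly long, chain of moves from relations 1--10, given in the Appendix (Figure~\ref{Fig:genus-relation}).

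To see why something nontrivial is required, re-associate the stacked $\mu$'s and $\Delta$'s with relations 3 and 4. This turns the two closed-up saddles into the two $b_3$-forms $\mu\circ(b_3\otimes\id)\circ\Delta$ and $\mu\circ(\id\otimes b_3)\circ\Delta$ of Corollary~\ref{cor: b2 representations}. So the lemma amounts to moving $b_3$ from one leg of the handle to the other. In a symmetric Frobenius algebra with Casimir element $\sum_i e_i\otimes e^i$, these two maps are $a\mapsto ah$ and $a\mapsto ha$, where $h=\sum_{i,j}e_ie_je^ie^j$. Their equality is the centrality of $h$, and the proof of that uses the Frobenius (bimodule) property of $\Delta$, not bookkeeping of twists.
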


\begin{proof}
    The proof that these two representations of $b_2$ are equivalent is given as a series of moves using the relations from Proposition \ref{Prop:relations1-10}.  However, the sequence is rather long, so we include it in the Appendix.
\end{proof}

\begin{corollary}
\label{cor: b2 representations}
    After reducing to the minimal number of pants (two $\mu$'s and
$\Delta$'s) and reducing saddles to either $S_1$ or $S_2$, 
$b_2$ has four representations as shown in Figure \ref{Fig:four-genus}: 

\begin{center}
    \includegraphics[width=0.4\textwidth]{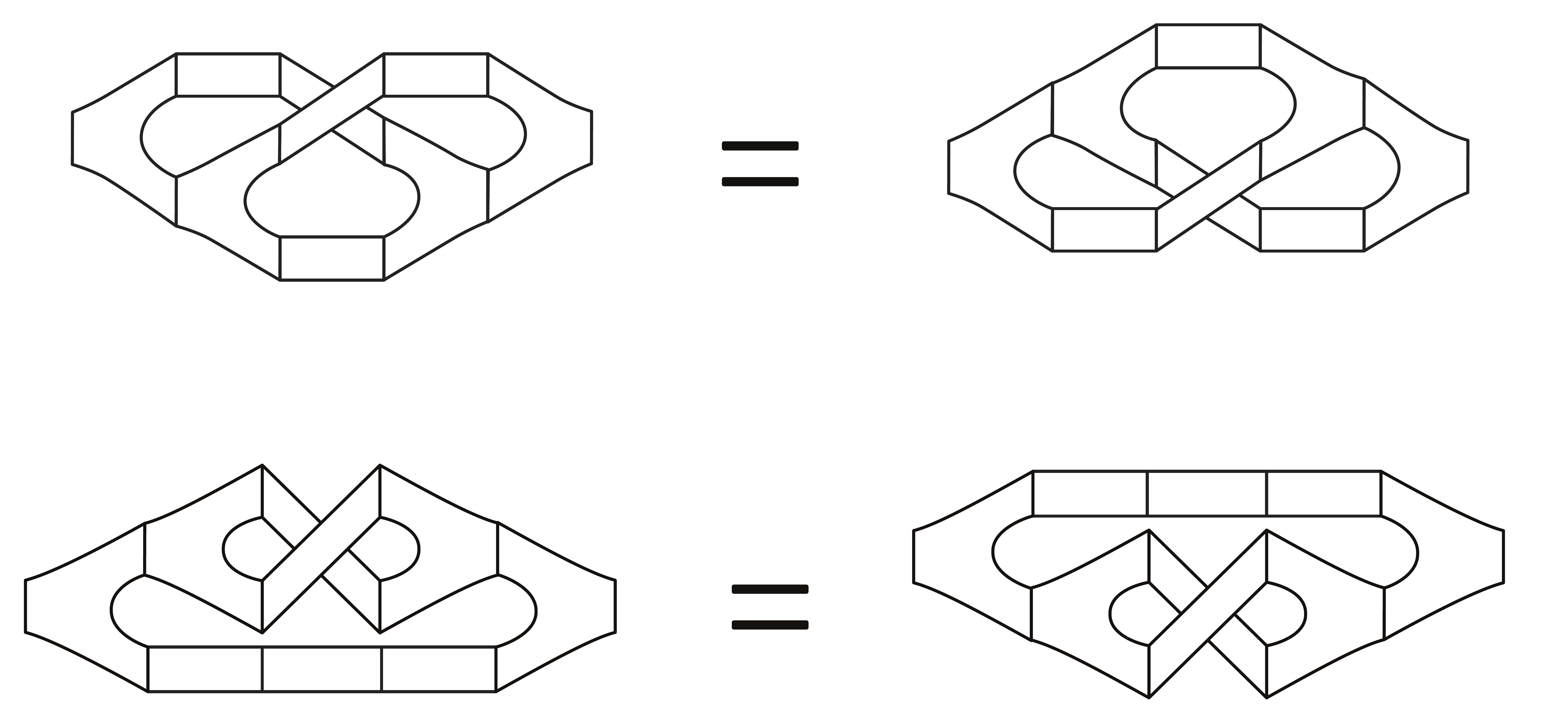}
    \captionof{figure}{Equivalent representations of the genus}\label{Fig:four-genus}
\end{center}
.
\end{corollary}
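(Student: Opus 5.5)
The plan mirrors the proof of Corollary~\ref{cor: saddle representations}. First I would describe what a minimal representation of $b_2$ looks like. It has exactly two $\mu$'s and two $\Delta$'s, and after the reduction the pants assemble into two saddles, each of which is either $S_1$ or $S_2$. In the genus surface, the two saddles are joined along both of their connecting legs, so the only freedom is which saddle type appears in each of the two positions. That gives at most $2\times 2=4$ configurations: $(S_1,S_1)$, $(S_1,S_2)$, $(S_2,S_1)$, $(S_2,S_2)$, with any twists between the saddles absorbed as described below. I would first check that each configuration actually yields a connected $1\to 1$ surface with genus one and a single boundary circle. This is a computation of interval cycles and Euler characteristic: $\chi=-2$ comes from four pants, and there is one boundary cycle $(1\,1')$, so the genus is $1$. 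The check shows that all four pictures in Figure~\ref{Fig:four-genus} represent $b_2$ topologically.

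Second, I would show the four pictures are equal in $\TFS$ using the relations. By Proposition~\ref{saddle relations}, $S_1\circ P=S_2=P\circ S_1$ and $S_2\circ P = S_1 = P\circ S_2$. So replacing one saddle by the other type costs a twist on one side of that saddle. The twist that appears between the two saddles can be moved into the adjacent saddle by the same relation. Alternatively, it can be moved across to change that saddle's type too, or it can be cancelled with $P\circ P=\id\otimes\id$. In this way I would connect $(S_1,S_1)\leftrightarrow(S_2,S_2)$ by inserting $P\circ P$ between the saddles and absorbing one $P$ into each saddle. To pass between the mixed configurations and the pure ones, I would use Lemma~\ref{$b_2$ relation}, which already identifies two distinct representations of $b_2$. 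The aim is to show those two lie in different classes under the twist-absorption move, so that together the moves connect all four.

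Third, I would argue exhaustiveness: no other representations occur. Any minimal representation decomposes into two saddles separated by a permutation of the two middle strands, which is either $\id$ or $P$. Each saddle is, by Corollary~\ref{cor: saddle representations}, one of $S_1,S_{1a},\dots,S_{2c}$, and these are already equal to $S_1$ or $S_2$. The $P$ is absorbed by Relations 11--12. This leaves exactly the four listed configurations.

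The main obstacle is the bookkeeping in the second step. The twist relations alone preserve a parity: the number of $S_2$'s plus the number of $P$'s, mod $2$. So they cannot link $(S_1,S_1)$ to $(S_1,S_2)$. That is precisely where Lemma~\ref{$b_2$ relation} (proved in the Appendix) must be invoked. I would verify carefully which of the four pictures the two representations in Figure~\ref{Fig:two-genus} correspond to, so that the lemma bridges the two parity classes.
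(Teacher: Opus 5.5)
Your proposal has a genuine gap: the structural decomposition your argument rests on does not exist.

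\textbf{The two-saddle decomposition is impossible.} Each of $S_1,S_2$ is a morphism $2\to 2$, so a composite $S\circ\pi\circ S'$ with $\pi\in\{\id,P\}$ is again $2\to 2$ and can never equal $b_2\colon 1\to 1$. Closing such a composite up into an endomorphism of $1$ would cost an extra $\Delta$ on the left and an extra $\mu$ on the right, giving six pants instead of four.

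In a minimal word for $b_2$ there are no units or counits. The first generator acting on the single incoming interval is therefore forced to be a $\Delta$, and the last one a $\mu$. These outer pants do not move, and all the freedom sits in the interior $2\to 2$ piece $X$ of $\mu\circ X\circ\Delta$, which contains exactly one $\mu$ and one $\Delta$. That interior is one of two things:
\begin{enumerate}
\item a saddle, which Corollary~\ref{cor: saddle representations} reduces to $S_1$ or $S_2$;
\item after applying relations 3 and 4 to the interior pants stacked against the outer ones, a copy of $b_3$ on the top or on the bottom strand.
\end{enumerate}
These are the four pictures of Figure~\ref{Fig:four-genus}. Consequently:
\begin{enumerate}
\item your list $(S_1,S_1),(S_1,S_2),(S_2,S_1),(S_2,S_2)$ does not describe them;
\item your exhaustiveness step (``two saddles separated by $\id$ or $P$'') is built on a decomposition that cannot occur;
\item you miss the two $b_3$-representations entirely. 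These are exactly the ones used later in Algorithm~\ref{alg:b2-factorization}(1)(a).
\end{enumerate}

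\textbf{Euler characteristic.} Your topological check is also off. In $\TFS$ each $\mu$ and $\Delta$ is a disk. A word with four pants and five internal gluing intervals therefore has $\chi=4-5=-1$, which is what genus one with a single boundary circle requires. Your value $\chi=-2$ with one boundary component would force genus $3/2$.

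\textbf{What survives.} Your instinct that twist relations alone cannot finish the job is right, but in the correct picture the obstruction is different. Passing from $\mu\circ S_1\circ\Delta$ to $\mu\circ S_2\circ\Delta$ via relations 11--12 leaves a $P$ next to one of the outer pants. Since the multiplication is non-commutative ($\mu\circ P\neq\mu$ and $P\circ\Delta\neq\Delta$ in $\TFS$), that twist cannot be absorbed. Removing it is precisely what Lemma~\ref{$b_2$ relation} supplies.

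The repaired argument is:
\begin{enumerate}
\item fix the outer $\Delta$ and $\mu$;
\item reduce the interior saddle to $S_1$ or $S_2$ and identify the two via Lemma~\ref{$b_2$ relation};
\item obtain the remaining two representations from these by relations 3 and 4.
\end{enumerate}
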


\begin{proof}
    Since the leftmost $\mu$ and the rightmost $\Delta$ of a $b_2$ cannot be moved, only the interior component between this $\mu$, $\Delta$ pair allows multiple representations. By Lemma \ref{$b_2$ relation} this interior component can be either a saddle composite, which can be reduced $S_1$ or $S_2$ by Corollary \ref{cor: saddle representations}. Any other representations of this interior component stem from applying relations 3 and 4 on the stacked $\mu$'s and $\Delta$'s, resulting in the last two composites featuring a $b_3$ either on the top or on the bottom. 
\end{proof}

\begin{lemma} \label{lem: b1, b2, b3 commutativity}
    The composites $b_1$, $b_2$, and $b_3$ pairwise commute by the relations in Figure \ref{Fig:relations}. 
\end{lemma}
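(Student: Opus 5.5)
We prove the three commutation relations $b_1b_2=b_2b_1$, $b_1b_3=b_3b_1$ and $b_2b_3=b_3b_2$ in $\End_{\TFS}(1)$. The same basic move underlies all three. Each $b_i$ has the shape $\mu\circ X_i\circ\Delta$: its leftmost generator is a $\mu$, or, for $b_1$, a unit-like cap feeding a $\mu$. By associativity and coassociativity (relations 3 and 4), the interior of one $b_i$ can be slid along a leg of the neighbouring composite. The plan is to establish one \emph{sliding lemma}: for any $X\in\End(1)$ built from generators and of the form $X=\mu\circ Y\circ\Delta$, the composite $b_i\circ X$ can be rewritten so that the $b_i$-part sits on a single leg strand of $X$.

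\textbf{Case $b_1$.} Write $b_1=\mu\circ(\id\otimes\iota')\circ\ldots$ following Figure~\ref{Fig:b1-hole}; it is a pair of pants capped off so as to create an empty boundary cycle. We then proceed in four steps.
\begin{enumerate}[(i)]
\item Use the unit and counit relations to write the hole as a small loop attached to a single strand.
\item Use (co)associativity to slide the attaching $\mu$ and $\Delta$ through the $\mu$ and $\Delta$ of $b_2$ or $b_3$.
\item Pass the loop across any permutation $P$ using naturality of $P$ (relations 7 and 8).
\item Reassociate on the other side.
\end{enumerate}
This gives $b_1\circ b_j=b_j\circ b_1$ for $j=2,3$. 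The point is that the hole carries no intervals, so moving it along a strand never changes interval cycles, and every step is one of the listed moves.

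\textbf{Case $b_2b_3=b_3b_2$.} This is the most delicate case. We use Corollary~\ref{cor: b2 representations}, whose last two representations of $b_2$ display a $b_3$ sitting inside the genus morphism, on the top or on the bottom strand. Writing $b_3=\mu\circ P\circ\Delta$, we expand $b_3\circ b_2$, merge the outer $\mu$ of $b_2$ with the $\Delta$ of $b_3$ via the Frobenius relation (relations 5 and 6), and then use the twist relations 11 and 12 (Proposition~\ref{saddle relations}) to move the permutation $P$ into the interior saddle. This converts the interior of $b_2$ from one of the representations in Corollary~\ref{cor: b2 representations} to another. The expected intermediate state is a genus-type composite whose interior carries an extra $b_3$ on one strand. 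Applying Lemma~\ref{$b_2$ relation} and the bottom/top interchange of Corollary~\ref{cor: b2 representations}, the extra $b_3$ can then be extracted on the opposite side, giving $b_2\circ b_3$.

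The main obstacle is this last step: keeping track of which legs are connected, as in Remark~\ref{saddle remark}, so that the $\mu$/$\Delta$ pair cannot be illegally passed. If the direct route fails, we first prove a lemma that $b_3$ slides along any single strand past a $\mu$ or a $\Delta$, namely
\[
\mu\circ(b_3\otimes\id)=b_3\circ\mu=\mu\circ(\id\otimes b_3).
\]
This follows from the Frobenius relation and naturality of $P$. Commutativity of $b_3$ with any composite built from generators, in particular with $b_1$ and $b_2$, then follows by induction on the number of generators.
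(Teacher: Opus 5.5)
\textbf{There is a genuine gap in the case $b_2b_3=b_3b_2$.} Your $b_1$ case is workable in outline, with two corrections. First, $b_1=\mu\circ\Delta$ contains no unit; indeed $\mu\circ(\id\otimes\iota)=\id$. Second, writing the hole as a loop on a strand, $\mu\circ(\id\otimes b_1\iota)=\mu\circ(b_1\iota\otimes\id)$, needs both Frobenius relations, not only the unit and counit relations.

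For $b_2b_3$, your ``direct route'' only names an expected intermediate state and never exhibits the moves. Those moves are exactly the content the paper supplies, as an explicit chain of relations in Figure~\ref{Fig:genus-past-b3}.

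Your fallback rests on a false identity. The composites $\mu\circ(b_3\otimes\id)$, $b_3\circ\mu$ and $\mu\circ(\id\otimes b_3)$ have interval cycles $(1)(2\,1')$, $(1\,2)(1')$ and $(2)(1\,1')$ respectively. So they are different morphisms of $\TFS$, no sequence of relations can identify them, and the induction built on this identity proves nothing.

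The same defect breaks your opening sliding lemma for $i=3$. Placing $b_3$ on a leg of $b_1=\mu\circ\Delta$ gives $\mu\circ(b_3\otimes\id)\circ\Delta$, which is $b_2$ by Corollary~\ref{cor: b2 representations}, not $b_3\circ b_1$. Your own remark about the hole explains why: a hole carries no intervals, so it can travel along strands, whereas $b_3$ separates interval cycles and cannot.

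The repair is never to move $b_3$, and to slide $b_2$ exactly as you slide $b_1$. Put $y=b_2\circ\iota$.
\begin{enumerate}[(i)]
\item Start from $b_2=\mu\circ(\id\otimes b_3)\circ\Delta$ and write $\Delta=(\mu\otimes\id)\circ(\id\otimes\Delta\circ\iota)$, using Frobenius and the unit. Associativity then gives $b_2=\mu\circ(\id\otimes y)$.
\item From the other representation $b_2=\mu\circ(b_3\otimes\id)\circ\Delta$ and the mirror Frobenius relation, you get $b_2=\mu\circ(y\otimes\id)$.
\end{enumerate}
Then
\[
b_3\circ b_2=\mu\circ P\circ\Delta\circ\mu\circ(\id\otimes y)=\mu\circ P\circ(\id\otimes b_2)\circ\Delta=\mu\circ(b_2\otimes\id)\circ P\circ\Delta=b_2\circ\mu\circ P\circ\Delta=b_2\circ b_3 .
\]
The steps use, in order, $\Delta\circ\mu=(\id\otimes\mu)\circ(\Delta\otimes\id)$, naturality of $P$, and associativity applied to $b_2=\mu\circ(y\otimes\id)$.

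The same computation with $z=b_1\circ\iota$ gives $b_1b_3=b_3b_1$. The remaining identity $b_1b_2=b_2b_1$ reduces by associativity to $\mu\circ(y\otimes z)=\mu\circ(z\otimes y)$, which is the two-sidedness of $z$.

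Compared with the paper's three diagram chases, this argument isolates the structural reason the lemma holds. $b_1$ and $b_2$ are multiplication by two-sided ``closed'' elements, and $b_3$ never has to move.
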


\begin{proof}

Figure \ref{Fig:hole-past-b3} provides a sequence of moves using $\TFS$ relations showing that $b_3 \circ b_1 = b_1 \circ b_3$.
    \begin{center}
    \includegraphics[width=0.75\textwidth]{Normal/Hole-past-b3.pdf}
    \captionof{figure}{The following moves show $b_3 \circ b_1 = b_1 \circ b_3$.}\label{Fig:hole-past-b3}
\end{center}

Figure \ref{Fig:hole-past-genus} provides a sequence of moves using $\TFS$ relations showing that $b_2 \circ b_1 = b_1 \circ b_2$.
\begin{center}
    \includegraphics[width=0.85\textwidth]{Normal/Hole-past-genus.pdf}
    \captionof{figure}{The following moves show $b_2 \circ b_1 = b_1 \circ b_2$.}\label{Fig:hole-past-genus}
\end{center}
Similarly, 
Figure \ref{Fig:genus-past-b3} provides a sequence of moves using $\TFS$ relations showing that $b_3 \circ b_2 = b_2 \circ b_3$.
\begin{center}
    \includegraphics[width=0.85\textwidth]{Normal/Genus-past-b3.pdf}
    \captionof{figure}{The following moves show $b_3 \circ b_2 = b_2 \circ b_3$.}\label{Fig:genus-past-b3}
\end{center}

\end{proof}

\begin{lemma} \label{lem: b3 connector relations}
    The three relations involving  $b_3$ in Figure \ref{Fig:b3-equivalences} hold in the $\TFS$ category.
 \begin{center}
    \includegraphics[width=0.99\textwidth]{Normal/b3-equivalences.pdf}
    \captionof{figure}{}\label{Fig:b3-equivalences}
 \end{center}

\end{lemma}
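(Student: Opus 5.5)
The figure with the three relations is not visible, so I will assume they are the standard $b_3$ identities mentioned in the introduction and in \cite[Propositions~2.1--2.2]{TFS}. These are: (i) $b_3$ can be slid through a multiplication or comultiplication leg, i.e.\ $\mu\circ(b_3\otimes\id)=b_3\circ\mu=\mu\circ(\id\otimes b_3)$ and dually for $\Delta$; (ii) $b_3^2=b_1\cdot b_3$; and (iii) the two representations of $b_3$ in Figure~\ref{Fig:b3-connector} agree, so that $b_3$ may equally be written $\mu\circ\Delta$-with-twist on either side. Each is an equality of composites in $\End_{\TFS}(1)$ or of small morphisms $2\to1$, $1\to2$. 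The proof therefore proceeds exactly as for Lemma~\ref{lem: b1, b2, b3 commutativity}: for each relation I exhibit an explicit finite sequence of moves from Figure~\ref{Fig:relations}, together with the already established Proposition~\ref{saddle relations}, Corollary~\ref{cor: saddle representations} and Lemma~\ref{lem: b1, b2, b3 commutativity}.

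\textbf{Step 1: the equivalence of the two representations of $b_3=\mu\circ P\circ\Delta$.} I would push the twist $P$ through $\Delta$ or $\mu$ using the naturality and symmetry relations for $P$ (relations 1 and 8), and then use the fact that the comultiplication of a symmetric Frobenius algebra satisfies $P\circ\Delta$ equal to $\Delta$ conjugated by the Nakayama-type move realised in Figure~\ref{Fig:relations}.

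\textbf{Step 2: the sliding relation (i).} Write $\mu\circ(b_3\otimes\id)=\mu\circ(\mu\otimes\id)\circ(P\otimes\id)\circ(\Delta\otimes\id)$. Apply associativity (relation 3) to regroup, and use Frobenius (relation 4 or 5) to turn $\mu\circ(\Delta\otimes \id)$ into $\Delta\circ\mu$. Then slide the remaining $P$ past the outer $\mu$ by naturality, which yields $b_3\circ\mu$. The second-leg version follows by composing with $P$ on the input and using $\mu\circ P$ together with Proposition~\ref{saddle relations}, and the $\Delta$ versions follow by the mirror-image argument. As a by-product, $b_3$ commutes with every generator, hence is central.

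\textbf{Step 3: the idempotent-type relation $b_3\circ b_3=b_1\cdot b_3$.} Expand $b_3\circ b_3$ and use Step 2 to move the second $\Delta$ adjacent to the first $\mu$. The interior then becomes $\Delta\circ\mu$ wrapped by twists, which is a saddle. Corollary~\ref{cor: saddle representations} lets me rewrite it as $S_1$ or $S_2$, and with relation 11/12 the twists cancel except for one closed loop $\mu\circ P\circ\Delta$ capped off. Counit and unit relations (relations 6 and 7) identify this closed part with $b_1=\varepsilon\circ\mu\circ\Delta\circ\iota$-type hole. Finally, commutativity from Lemma~\ref{lem: b1, b2, b3 commutativity} places $b_1$ in the required position.

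\textbf{Main obstacle.} I expect Step 3 to be the main obstacle. The difficulty is recognising the hole $b_1$ inside $b_3^2$ using only planar moves, because the twist must be cancelled against the saddle twist in exactly the right order. I would first try inserting $P\circ P=\id$ and then applying Proposition~\ref{saddle relations}. If that fails, I would fall back on an appendix-length explicit move sequence, as was done for Lemma~\ref{$b_2$ relation}.
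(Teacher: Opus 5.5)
\textbf{The proposal has a genuine gap: Step~2 sets out to prove an identity that is false in $\TFS$.}

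No sequence of relations from Figure~\ref{Fig:relations} can produce that identity, because all of those relations hold in $\TFS$. In particular, every relation preserves the invariants of Section~\ref{Sec:invariants}, including which intervals share a boundary component. The three composites in your (i) have different such data:
in $b_3\circ\mu$ the two inputs share a boundary component and the output is alone (cycles $(1\,2)$ and $(1')$);
in $\mu\circ(b_3\otimes\id)$ the cycles are $(1)$ and $(2\,1')$;
in $\mu\circ(\id\otimes b_3)$ they are $(2)$ and $(1\,1')$.
So these are three different tf-cobordisms. Algebraically, take the open TQFT given by $M_n(k)$ with its trace form. There $b_3(a)=\mathrm{tr}(a)\,1$, and the three composites send $a\otimes b$ to $\mathrm{tr}(ab)\,1$, $\mathrm{tr}(a)\,b$ and $\mathrm{tr}(b)\,a$ respectively.

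The move that fails is ``use Frobenius to turn $\mu\circ(\Delta\otimes\id)$ into $\Delta\circ\mu$''. That composite is not well typed, since $\Delta\otimes\id\colon 2\to 3$. The actual Frobenius relation $(\mu\otimes\id)\circ(\id\otimes\Delta)=\Delta\circ\mu$ never transports a $b_3$ from an input leg to the output. Consequently your by-product, that $b_3$ commutes with every generator, is also false. Sliding freely through legs is a property of $b_1$ and $b_2$; this is how the coupon is assembled in Algorithm~\ref{alg:coupon}. It is not a property of $b_3$, which by design changes the boundary-cycle structure. Relations that let pants pass a $b_3$ must preserve the cycle data. An example is $\mu\circ(b_3\otimes\id)=\mu\circ(\id\otimes b_3)\circ P$, where both sides have cycles $(1)$ and $(2\,1')$. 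Identities of this invariant-preserving kind are what Lemma~\ref{lem: equivalence of connecting three cycles} uses the present lemma for.

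\textbf{Step~3 depends on Step~2 and has a further type error.} The morphism $\varepsilon\circ\mu\circ\Delta\circ\iota$ is a closed morphism $0\to 0$, an annulus with no intervals. No commutation can turn the disconnected $b_3\otimes(\varepsilon\circ\mu\circ\Delta\circ\iota)$ into the connected surface $b_1\circ b_3$. The relation $b_3\circ b_3=b_1\circ b_3$ is true, but no sliding is needed to prove it:
in $\mu\circ P\circ\Delta\circ\mu\circ P\circ\Delta$ the inner $\mu$ and $\Delta$ are already adjacent, forming $\Delta\circ\mu$;
the hole is the circle formed by gluing the two isolated boundary circles along the middle interval;
it must be extracted as the endomorphism $b_1\in\End_{\TFS}(1)$ using Frobenius, associativity and naturality of $P$ inside the composite.

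Step~1 similarly appeals to an unspecified ``Nakayama-type move''. With a symmetric trace there is no nontrivial one, so you need to name an actual relation.

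The paper's proof consists of explicit move sequences: one for the first relation (Figure~\ref{Fig:proof-b3-equivalences}), its mirror image for the second, and a separate sequence for the third (Figure~\ref{Fig:proof-b3-equivalences-case3}). Your fallback plan is of this type. Before searching for moves, though, test each candidate relation against the interval cycles, the number of boundary components and the genus.
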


\begin{proof} We provide a sequence of move using relations in the category showing each of these three equivalences.
  \begin{enumerate}
      \item The first case requires the following moves shown in Figure \ref{Fig:proof-b3-equivalences}. 
\begin{center}
\includegraphics[width=0.85\textwidth]{Normal/Proof-b3-equivalences.pdf}
\captionof{figure}{}\label{Fig:proof-b3-equivalences}
 \end{center}
    \item The second equivalence is an analog of (1).
    \item The third equivalence can be shown using the sequence of moves in Figure \ref{Fig:proof-b3-equivalences-case3}. 
\begin{center}
\includegraphics[width=0.85\textwidth]{Normal/Proof-b3-equivalences-part3.pdf}
    \captionof{figure}{}\label{Fig:proof-b3-equivalences-case3}
 \end{center} 
 
 \end{enumerate}
      
\end{proof}

\subsection{Thin Flat Surface Classification and Invariants}
\label{Sec:invariants}

A tf-cobordism $C$ decomposes into $C = \bigsqcup_{i=1}^k C_i$ where
$C_1, \ldots, C_k$ are the connected components of $C$.  As explained in~\cite{TFS},
for each connected component of $C$, there are three invariants that uniquely determine
the component up to diffeomorphism relative to the boundary:
\begin{enumerate}[(1)]
  \item the interval cycle(s);
  \item the number of boundary components;
  \item the genus of the surface.
\end{enumerate}

A connected tf-cobordism is \textbf{viewable} if every boundary component contains at
least one interval.  A boundary component containing no intervals is called a
\textbf{hole}.  In what follows we work exclusively with viewable tf-cobordisms.

The interval cycles, together with the genus and the number of boundary components,
form the complete set of invariants for connected viewable thin-flat cobordisms.  We
describe these invariants more explicitly.

Let $f \colon n \to m$ be a connected viewable tf-cobordism.  Traversing each boundary
component of $f$ according to its induced orientation and recording the incoming and
outgoing intervals encountered, we obtain a collection of \emph{interval cycles}
$\lambda_1, \ldots, \lambda_p$.  Each cycle $\lambda_k$ is a cyclic ordering of a
subset of the boundary labels $\{1, \ldots, n, 1', \ldots, m'\}$, where unprimed
labels refer to incoming intervals and primed labels to outgoing ones.  We write $a_k$
for the number of incoming intervals in $\lambda_k$ and $a'_k$ for the number of
outgoing intervals, so that
\[
  \sum_{k=1}^{p} a_k = n
  \qquad \text{and} \qquad
  \sum_{k=1}^{p} a'_k = m.
\]
In addition to the cycle data, $f$ has two further invariants: its \emph{genus}
$g \geq 0$ and its number of \emph{holes} $c \geq 0$.

\begin{definition}
\label{def:minimal}
A thin-flat cobordism is \textbf{minimal} if it is genus $0$ and all boundary
components contain intervals, that is, $c = 0$.
\end{definition}

\subsection{Pairs of pants required for a minimal surface}
\label{Sec:pants-minimal-surface}

The number of multiplications $\#\mu$, and comultiplications $\#\Delta$, required to express a minimal
surface is determined by the block structure of its interval cycles.

\begin{definition}
\label{def:block}
A collection of $q$ intervals $d_1 \ldots d_q$ is a \textbf{block} if they are all
either outgoing or incoming and consecutively ordered in an interval cycle.
\end{definition}

\begin{remark}
Not all intervals in a block define a chain of adjacent interval pairs.  A block in a
cycle may be ``separated'' by other intervals depending on how the cycle is written;
for example, the block $a_1 a_2 a_3$ may appear as $(a_2~a_3 \ldots a_1)$.
\end{remark}

The required $\#\mu$ for an arbitrary interval cycle $(a_1 \ldots)$ corresponds to
adding the number of incoming adjacent interval pairs with the number of blocks composed
of outgoing interval(s) between the first and last block.  In particular, for a surface
with $n$ in-boundary intervals, this gives $\#\mu \geq n - 1$.  Similarly, the required
$\#\Delta$ for an arbitrary interval cycle corresponds to adding the number of outgoing
adjacent interval pairs with the number of blocks composed of incoming interval(s)
between the first and last block.  Thus, for a surface with $m$ out-boundary intervals,
$\#\Delta \geq m - 1$.

\section{A normal form}
\label{sec:normalform}

In~\cite{TFS} it is argued that a connected, viewable thin flat cobordism is comprised
of a ``coupon'' of powers of $b_1$ (holes) and $b_2$ (genus) and a minimal connected
component.  Furthermore, the copies of $b_1$ and $b_2$ can be placed anywhere on the
surface, producing diffeomorphic results.  In our normal form, we place the copies of
$b_1$ and $b_2$ in a coupon $b_1^i b_2^j$, with $i, j \in \mathbb{Z}^+$, on the top
leg at the right of the figure.

The minimal part of a viewable connected component describes a connected surface with
a number of circles marked with in- and out-going intervals, as in
Figure~\ref{Fig:circles-in-out}.  Each one of these circles with marked intervals is
described as a cycle.

\begin{center}
\includegraphics[width=0.25\textwidth]{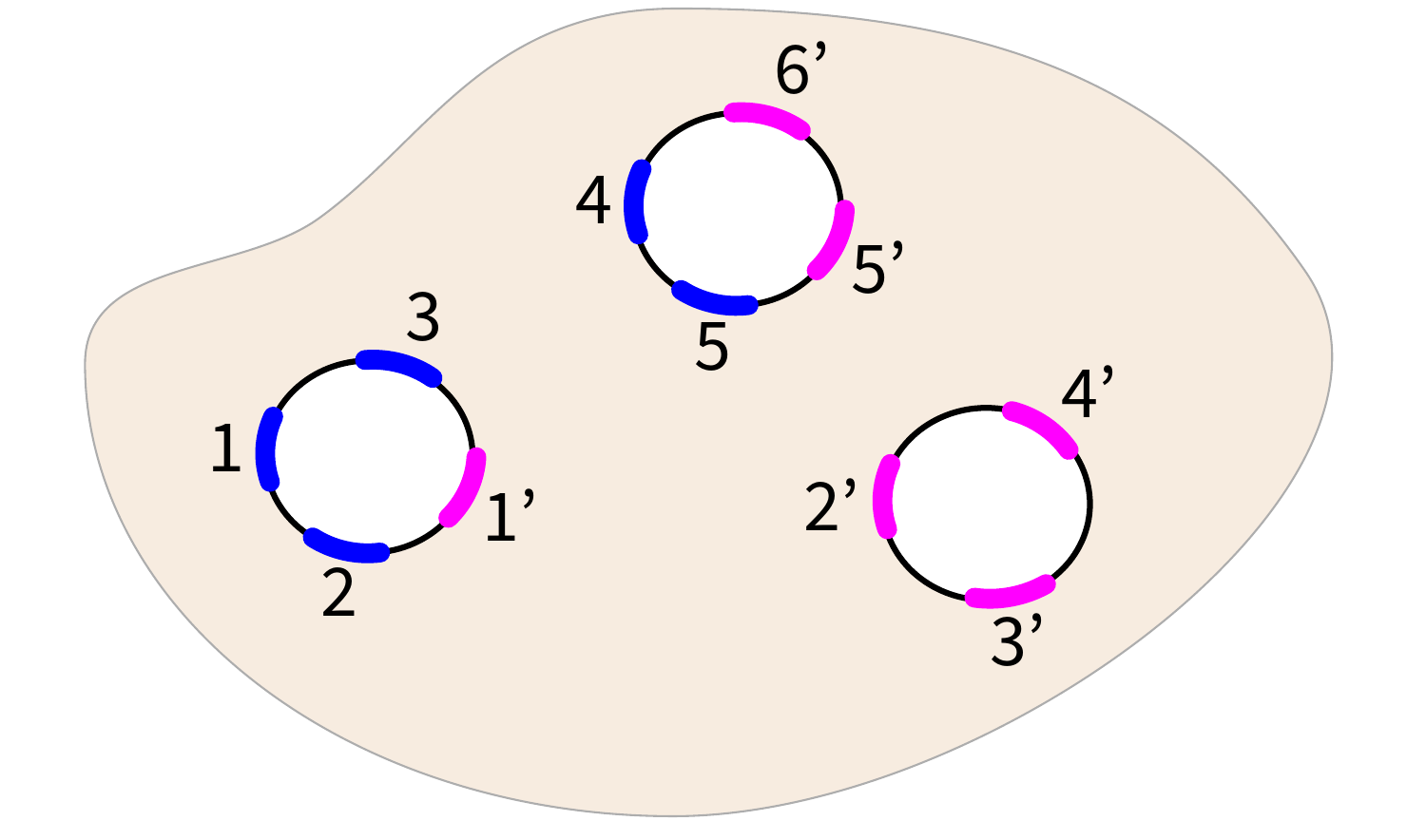}
    \captionof{figure}{Part of a minimal surface showing three cycles: $(1~2~1'~3)$, $(4~5~5'~6')$, and $(2'~3'~4')$.}\label{Fig:circles-in-out}
 \end{center}

The construction of our normal form follows a strategy in four stages:
\begin{enumerate}[(1)]
  \item Algorithm~\ref{alg:cycle-to-gen} expresses each cycle in terms of generators
        of the $\TFS$ category.
  \item We then create a connected surface out of several cycles without modifying the
        individual cycles; see Section~\ref{Sec:use-of-b3}.
  \item The order of the cycles is not a crucial part of the structure of our normal
        form.  However, we briefly discuss a choice of ordering for the cycles; see Section~\ref{sec:ordering} 
  \item The last step is adding the coupon of $b_1$'s and $b_2$'s.
\end{enumerate}

\subsection{The building blocks}
\label{sec:blocks}

We define four families of building blocks from which the normal form is assembled.
The first two are permutations of the source and target objects; the third is the
combinatorial heart of the construction; the fourth encodes the topology.

\begin{construction}[Input permutation]
\label{con:input-perm}
After fixing an ordering of the cycles, the \emph{input permutation}
$\bar\sigma(f) \colon n \to n$ is the morphism in $\TFS$ that rearranges the $n$
incoming intervals into \emph{block order}: for each $k = 1, \ldots, p$, the $a_k$
incoming intervals belonging to $\lambda_k$ are moved to the consecutive positions
\[
  a_1 + \cdots + a_{k-1} + 1, \quad \ldots, \quad a_1 + \cdots + a_k,
\]
and within this block they are ordered by the cyclic order of $\lambda_k$ starting
from the smallest label.  The morphism $\bar\sigma(f)$ is expressed as a composition
of permutation generators $P$ in $\TFS$.
\end{construction}

\begin{construction}[Output permutation]
\label{con:output-perm}
The \emph{output permutation} $\bar\rho(f) \colon m \to m$ is the morphism in $\TFS$
that reorders the $m$ outgoing intervals from the block order produced by the connected
core (defined below) to the labeling $1', \ldots, m'$ of the outgoing intervals as they
appear in $f$.  Like $\bar\sigma(f)$, it is a composition of permutation generators $P$.
\end{construction}

\begin{construction}[Minimal cycle surfaces]
\label{con:min-cycle}
For each $k = 1, \ldots, p$, the \emph{minimal cycle surface} $A(\lambda_k)$ is the
tf-cobordism
\[
  A(\lambda_k) \colon a_k \longrightarrow a'_k
\]
produced by Algorithm~\ref{alg:cycle-to-gen} applied to the cycle $\lambda_k$.  It is
the unique minimal tf-cobordism (up to the relations of $\TFS$) whose single boundary
component realizes the cycle $\lambda_k$; in particular, it is genus~$0$, has no holes,
and has exactly one boundary component.  By Section~\ref{Sec:pants-minimal-surface}, the
surface $A(\lambda_k)$ contains precisely $a_k - 1$ multiplications and $a'_k - 1$
comultiplications.
\end{construction}

\begin{construction}[Connected core]
\label{con:connected-core}
The \emph{connected core}
\[
  K_p(\lambda_1, \ldots, \lambda_p) \colon n \longrightarrow m
\]
is the connected tf-cobordism obtained from the minimal cycle surfaces
$A(\lambda_1), \ldots, A(\lambda_p)$ by applying $(p-1)$ copies of the connector
morphism $b_3$, following the procedure of Section~\ref{Sec:use-of-b3}.  
Specifically, the $(p-1)$ copies of $b_3$ connect $A(\lambda_2), \ldots, A(\lambda_p)$
directly to $A(\lambda_1)$, so that the $p$ minimal cycle surfaces are joined in a
star pattern centered at $A(\lambda_1)$; that any other arrangement of $(p-1)$
copies of $b_3$ yields the same morphism is Corollary~\ref{cor:tree-order}.

The
connection does not alter the interval structure of any individual cycle: each cycle
$\lambda_k$ retains its own boundary component in the resulting surface.  The outgoing
intervals of $K_p$ are arranged in blocks, with the $a'_k$ outgoing intervals of
$A(\lambda_k)$ appearing consecutively in positions $a'_1 + \cdots + a'_{k-1} + 1$
through $a'_1 + \cdots + a'_k$.
\end{construction}

\begin{remark}
\label{rem:connection}
The connection procedure depends on whether the cycles being connected have $\begin{cases} \textnormal{incoming} \\ \textnormal{outgoing} \end{cases}$
intervals.  When a cycle $\lambda_k$ has $\begin{cases} a_k = 0 \\ a'_k = 0 \end{cases}$, the $\begin{cases} \textnormal{unit initiating} \\ \textnormal{counit terminating} \end{cases}$  
$A(\lambda_k)$ is removed and $b_3$ is attached to the resulting free boundary before
connecting to the adjacent cycle; see Example~\ref{ex:connecting} and
Figure~\ref{Fig:cycle-b3}.  The four cases are illustrated in
Figure~\ref{Fig:four-cases-connect-b3}.
\end{remark}

\begin{construction}[Topology coupon]
\label{con:topology-coupon}
The \emph{topology coupon}
\[
  \Gamma(c, g) \colon m \longrightarrow m
\]
is the morphism that attaches $c$ holes and $g$ handles to the first outgoing interval
of the connected core.  It is defined as
\[
  \Gamma(c, g) \;:=\; (b_1^{c} \circ b_2^{g}) \otimes \id_{m-1},
\]
where $b_1, b_2 \in \End_{\TFS}(1)$ are the hole and genus endomorphisms
of~\cite{TFS}, and $\id_{m-1}$ is the identity on the remaining $m - 1$ outgoing
intervals.  When $m = 1$, this reduces to $b_1^{c} \circ b_2^{g} \colon 1 \to 1$.
Since $b_1$ and $b_2$ commute (Proposition~2.1 of~\cite{TFS}), the order of
composition within $b_1^{c} \circ b_2^{g}$ does not affect the result.
\end{construction}

\subsection{The normal form definition}
\label{sec:normalformdef}

\begin{definition}[Normal form of a connected viewable tf-cobordism]
\label{def:nf}
Let $f \colon n \to m$ be a connected viewable tf-cobordism with interval cycles
$\lambda_1, \ldots, \lambda_p$, where $\lambda_k$ involves $a_k$ incoming and $a'_k$
outgoing intervals, genus $g \geq 0$, and $c \geq 0$ holes.  Fix an ordering of the
cycles as in Section~\ref{sec:ordering}.  The \emph{normal form} of $f$ is the
tf-cobordism
\begin{equation}
\label{eq:nf}
  \NF(f)
  \;:=\;
  \Gamma(c,\, g)
  \;\circ\;
  \bar\rho(f)
  \;\circ\;
  K_p\!\bigl(\lambda_1, \ldots, \lambda_p\bigr)
  \;\circ\;
  \bar\sigma(f).
\end{equation}
For a general (not necessarily connected) viewable tf-cobordism $f = f_1 \otimes
\cdots \otimes f_c$ decomposed into connected components, the normal form is the
monoidal product
\[
  \NF(f) \;:=\; \NF(f_1) \otimes \cdots \otimes \NF(f_c).
\]
\end{definition}

Reading equation~\eqref{eq:nf} from right to left (that is, following the composition
from source to target):
\begin{enumerate}[\upshape (1)]
  \item $\bar\sigma(f)$ rearranges the $n$ incoming intervals into block order,
        grouping each cycle's incoming intervals consecutively.
  \item $K_p(\lambda_1, \ldots, \lambda_p)$ builds the connected minimal surface:
        each cycle is expressed as a minimal surface $A(\lambda_k)$ via
        Algorithm~\ref{alg:cycle-to-gen}, and the $p$ surfaces are joined using
        $(p-1)$ copies of $b_3$.
  \item $\bar\rho(f)$ reorders the $m$ outgoing intervals from block order to the
        labeling of $f$.
  \item $\Gamma(c, g)$ inserts $c$ holes and $g$ handles onto the first outgoing
        interval, completing the topological type of $f$.
\end{enumerate}

\subsection{Cycle ordering}
\label{sec:ordering}

The normal form as stated in Definition~\ref{def:nf} depends on the ordering of the
cycles $\lambda_1, \ldots, \lambda_p$.  Different orderings produce different
expressions for $\bar\sigma(f)$ and $\bar\rho(f)$, and a different arrangement of the
minimal surfaces within $K_p$.  The resulting tf-cobordisms are all related by
permutation relations in $\TFS$, so they represent the same morphism.

For the normal form to be a well-defined canonical representative (rather than a family
of equivalent expressions), we fix the following convention.

\begin{notation}\label{Notation1}
Given the interval cycles $\lambda_1, \ldots, \lambda_p$ of a connected viewable
tf-cobordism $f \colon n \to m$, let $\min(\lambda_k)$ denote the smallest label
appearing in $\lambda_k$ (using the ordering $1 < 2 < \cdots < n < 1' < 2' < \cdots
< m'$).  We order the cycles so that $\min(\lambda_1) < \min(\lambda_2) < \cdots <
\min(\lambda_p)$.  Within each cycle, the starting point is the interval with the
smallest label.
\end{notation}

With this convention, the permutations $\bar\sigma(f)$ and $\bar\rho(f)$ are uniquely
determined by $f$, and the normal form~\eqref{eq:nf} is a canonical expression in the
generators of $\TFS$.

\subsection{Minimal surface algorithm}
\label{sec:algorithm}

In this section we provide an algorithm that allows us to express any cycle of incoming
and outgoing boundaries in terms of generators of the $\TFS$ category.  Our goal is to
give a constructive procedure that converts an interval cycle into a composition of the
generators of $\TFS$; this algorithm is a key ingredient in establishing the normal form.

\begin{definition}
\label{def:adjacent}
Two intervals, $i$ and $j$, are \textbf{adjacent} if they are next to each other in an
interval cycle, that is, $(\ldots\, i\; j\, \ldots)$.
\end{definition}

\begin{algorithm}
\label{alg:cycle-to-gen}
Given a minimal TF-cobordism from $n$ intervals to $m$ intervals defined by an interval
cycle $(1 \ldots 1' \ldots)$, we express the surface in terms of the generators using
the following procedure.  We illustrate the various phases with the cycle
$(1\ 2\ 4\ 1'\ 3'\ 3\ 2')$.
\begin{enumerate}[(1)]
\item First, number each in-boundary on the left side from top to bottom in ascending
      order $(1, \ldots, n)$.
\item Next, number each out-boundary on the right side from bottom to top in ascending
      order $(1', \ldots, m')$.
\item Then, using the twists, order the left and right intervals by the order they
      appear in the interval cycle.
\end{enumerate}
\begin{center}
  \includegraphics[width=0.55\textwidth]{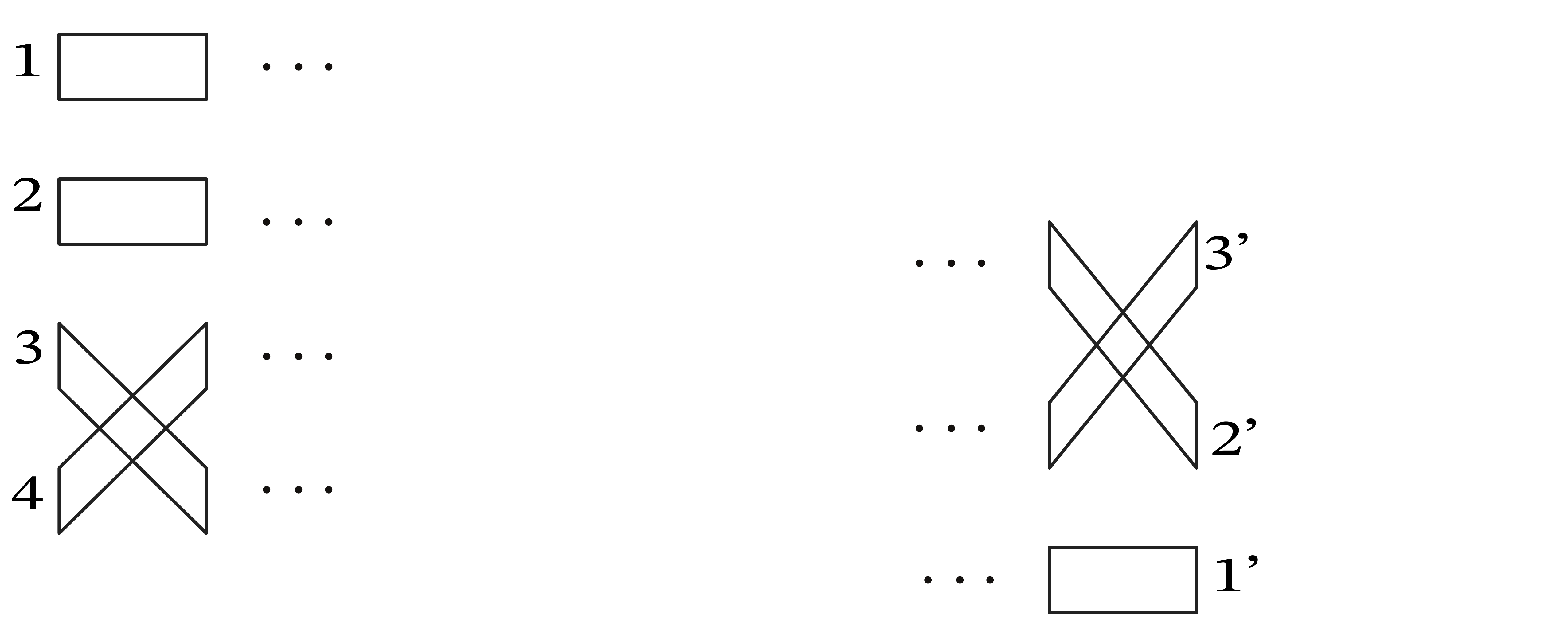}
  \captionof{figure}{Steps 1 to 3: organizing the boundaries}
\end{center}
\begin{enumerate}[(1)]
  \setcounter{enumi}{3}
\item If intervals on the same side are adjacent in the cycle, use a combination of
      pairs of pants and identities to merge these adjacent intervals to a single one.
\item Label the endpoints of each interval in the middle with arrows indicating the
      orientation and order of the cycle as in Figure~\ref{Fig:steps4-5}.
\end{enumerate}
\begin{center}
  \includegraphics[width=0.55\textwidth]{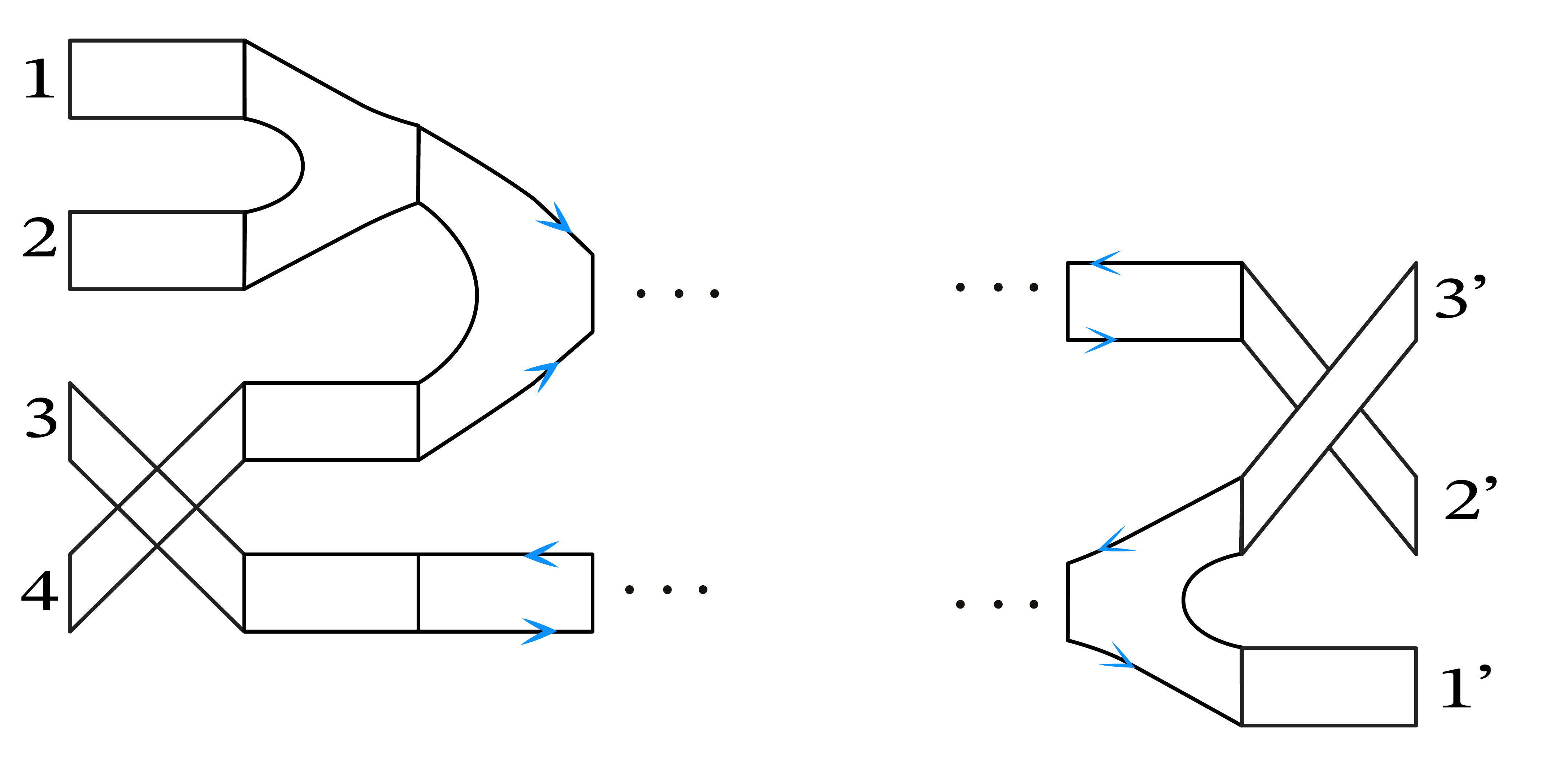}
  \captionof{figure}{Steps 4 and 5: joining consecutive boundaries from the same
  block.\label{Fig:steps4-5}}
\end{center}
\begin{enumerate}[(1)]
  \setcounter{enumi}{5}
\item Starting with the left intervals, find the arrow outgoing to $1'$ and connect
      the boundary component to the incoming arrow to $1'$.  Continue connecting these
      outgoing boundaries in numerical order until none are left.
\item Moving to the right intervals, if there are multiple outgoing boundary components,
      follow the same process as on the left until only one outgoing boundary component
      is left.  See Figure~\ref{Fig:steps6-7}.
\end{enumerate}
\begin{center}
  \includegraphics[width=0.55\textwidth]{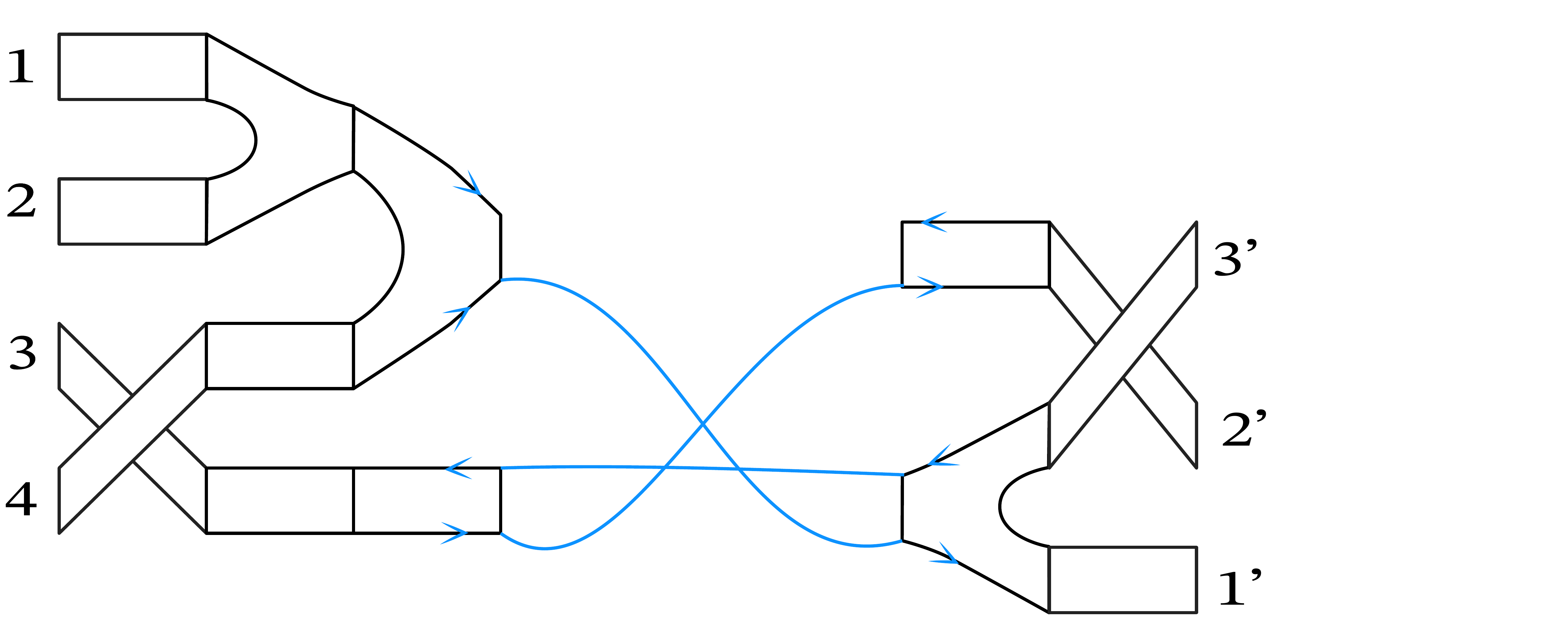}
  \captionof{figure}{Steps 6 and 7: joining boundaries except for the
  last.\label{Fig:steps6-7}}
\end{center}
\begin{enumerate}[(1)]
  \setcounter{enumi}{7}
\item For the last outgoing boundary, extend the boundary component towards the left
      in parallel to the boundary component below and trace through all the boundaries
      in this central part of the figure until it connects to the first incoming
      boundary component on the left.
\end{enumerate}
\begin{center}
  \includegraphics[width=0.5\textwidth]{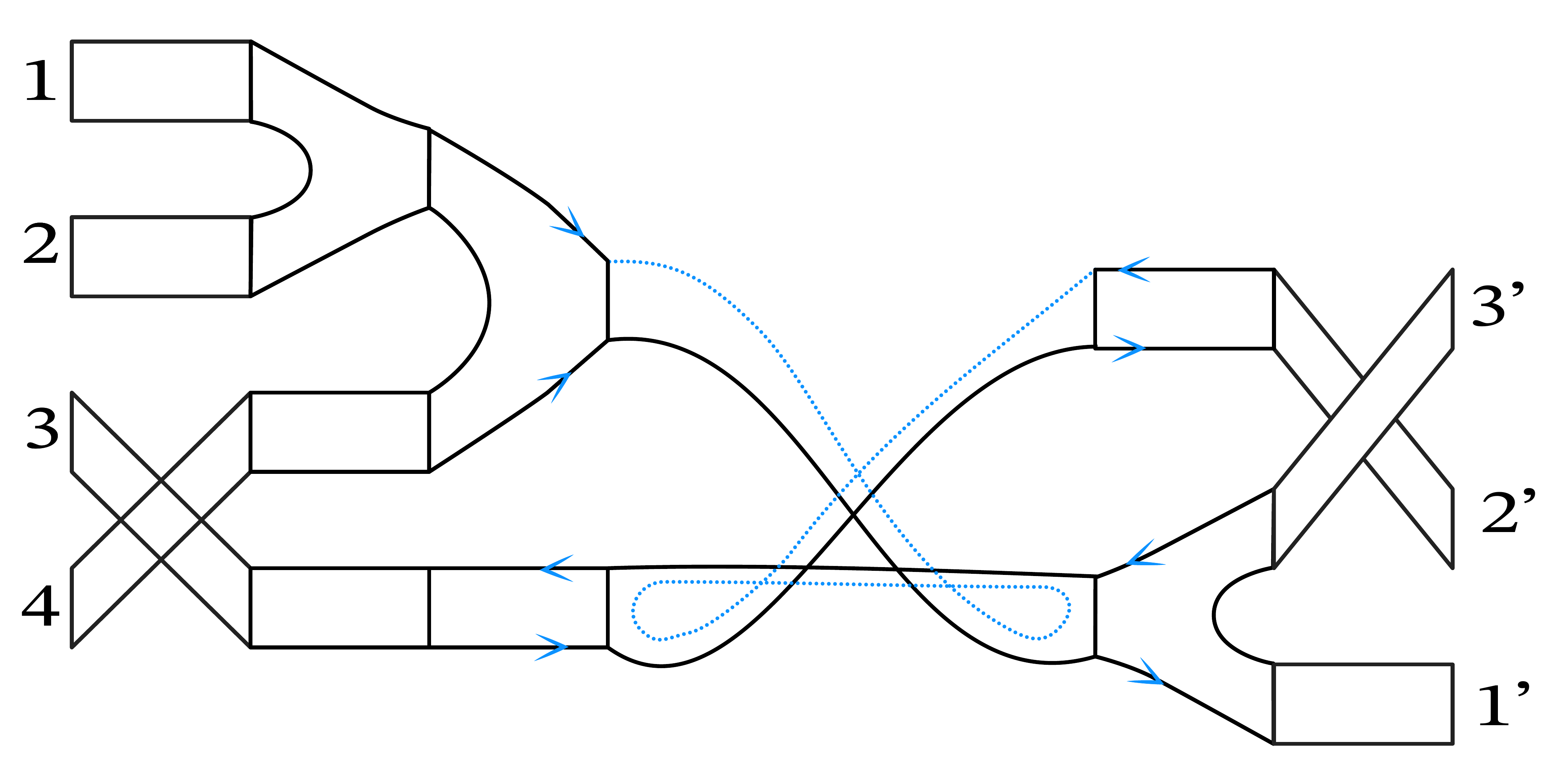}
  \captionof{figure}{Step 8: adding the last boundary component.\label{Fig:step8}}
\end{center}
Note that each time this last line traces along a vertical boundary, a multiplication
or co-multiplication and a twist are added to the figure.  Written purely in terms of
generators, the minimal surface from Figure~\ref{Fig:step8} is shown in
Figure~\ref{Fig:minimal-final}.
\begin{center}
  \includegraphics[width=0.55\textwidth]{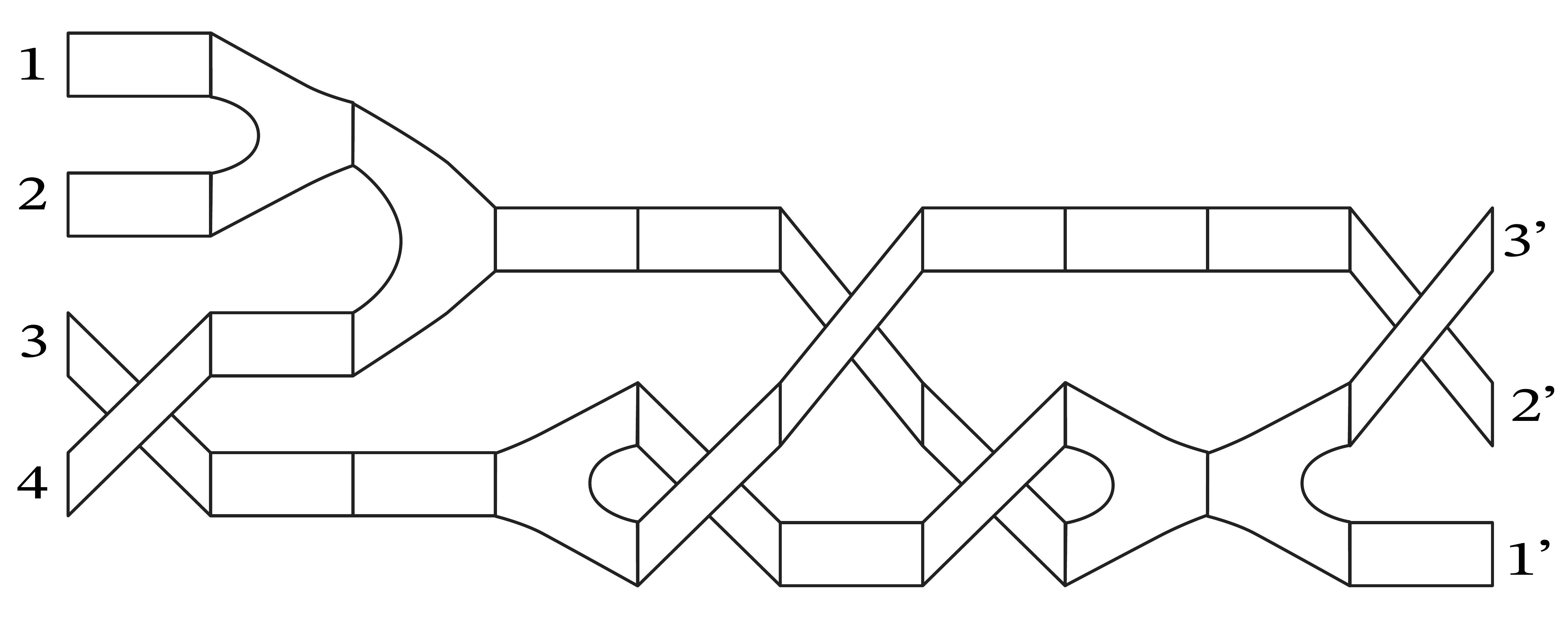}
  \captionof{figure}{Expression in terms of generators.\label{Fig:minimal-final}}
\end{center}
\end{algorithm}

\begin{remark}
The number of multiplications $\#\mu$ and co-multiplications $\#\Delta$ are invariants
of a minimal surface modulo the snake and capped pants relations.  Note that adding
additional $\mu$'s or $\Delta$'s either alters interval cycles or creates additional
boundary components.
\end{remark}

\subsection{Connecting cycles}
\label{Sec:use-of-b3}

Two thin flat surfaces expressed in terms of generators can become connected (without
modifying the structure of each cycle) by using copies of $b_3$ and additional
multiplications and co-multiplications.

There are several cases to consider according to whether the first and/or second cycles
being connected have outgoing or incoming boundaries.  See
Figure~\ref{Fig:four-cases-connect-b3}.

\begin{center}
  \includegraphics[width=0.85\textwidth]{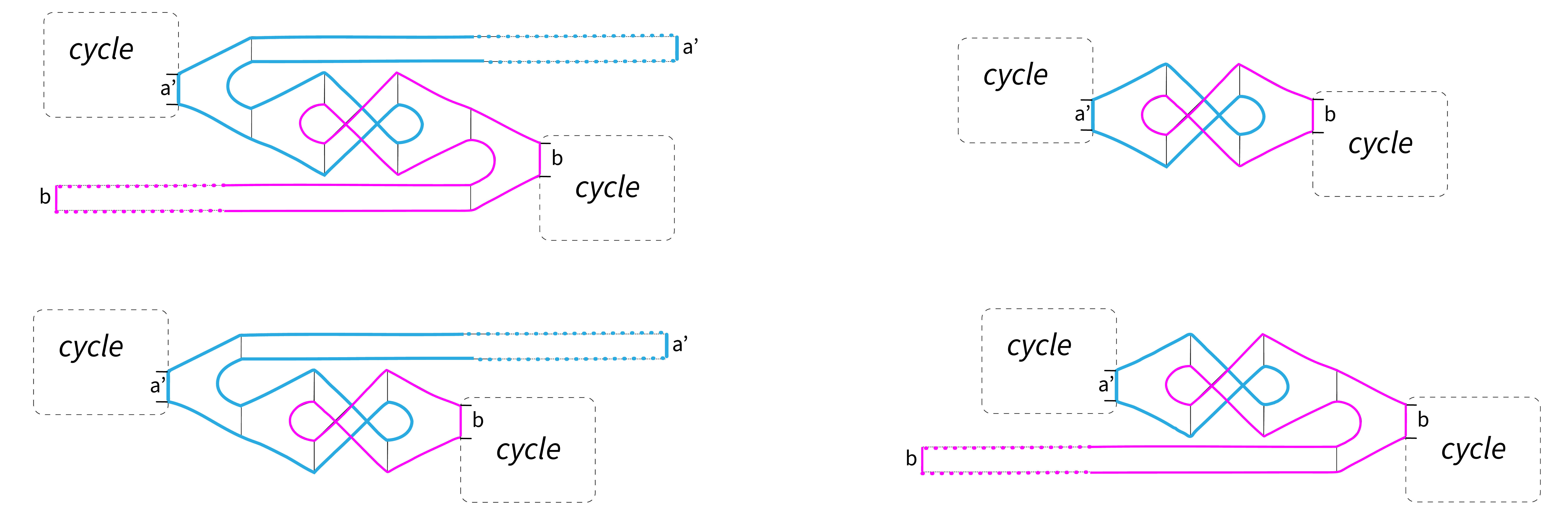}
  \captionof{figure}{Four cases of connecting cycles.  In every one of these cases the
  cycles remain unchanged, while the surfaces corresponding to each of them are now
  connected.\label{Fig:four-cases-connect-b3}}
\end{center}

Note that if a minimal surface corresponding to a cycle has no incoming boundary, it
will start with a unit; if it has no outgoing boundary, it will finish with a counit.
In either case, the unit or counit is removed from the minimal surface, and the copy of
$b_3$ is glued to the resulting free boundary.

\begin{example}
\label{ex:connecting}
The cycle $(1\ 3\ 2)$ does not have outgoing boundary.  In terms of generators it is
shown in Figure~\ref{Fig:cycle-b3}.  If we need to connect this surface to another one,
we delete the counit and glue the resulting boundary to the incoming boundary of $b_3$.
\end{example}

\begin{center}
  \includegraphics[width=0.43\textwidth]{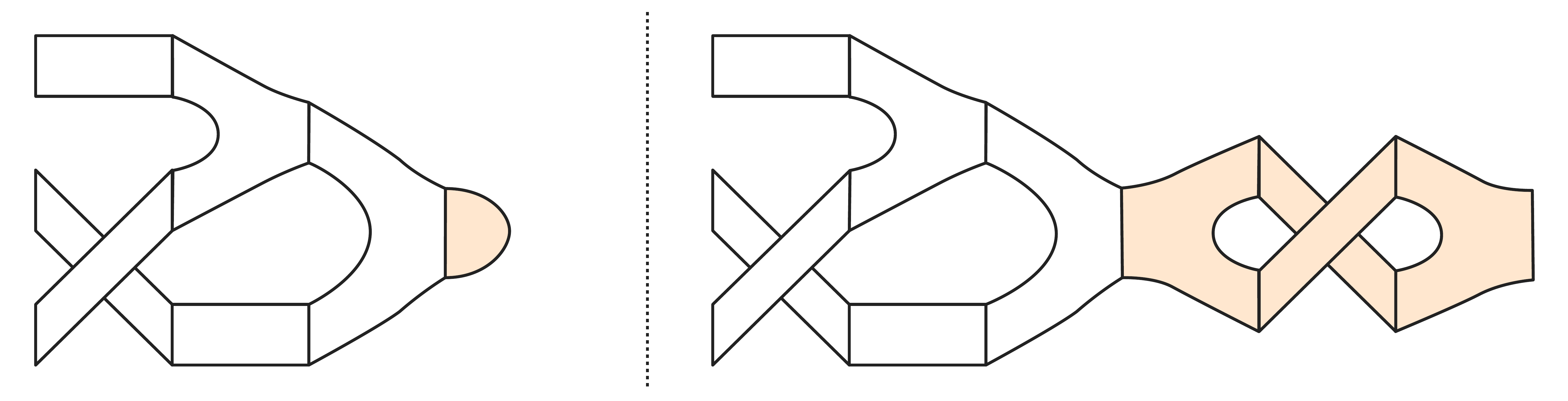}
  \captionof{figure}{Connecting a cycle with $b_3$.\label{Fig:cycle-b3}}
\end{center}

\medskip

\section{Sufficiency: form wild to normal}\label{Sec:Wild-to-normal}

This section proves that the relations of Section \ref{Sec:category-description} (numbered 1 through 10) are sufficient to reduce an arbitrary composite of generators to the normal form of \Cref{sec:normalformdef}. The argument proceeds in seven stages, each isolating one piece of the topological data of a connected tf-cobordism:

\begin{enumerate}
\item[(1)] Section \ref{subsec:excess}:count and remove every excess $\mu$ and $\Delta$ from the surface, leaving exactly the number of pants required by the topology;
\item[(2)] Section \ref{subsec:b3fact}: factor out the connector morphisms $b_3$ that join distinct interval cycles;
\item[(3)] Section \ref{sec: b1 factorization}: factor out the hole morphisms $b_1$;
\item[(4)] Section \ref{subsec:b2fact}: factor out the genus morphisms $b_2$;
\item[(5)] Section \ref{subsec:coupon}:move the factored $b_1$'s and $b_2$'s into the topology coupon at the end of the surface;
\item[(6)] Section \ref{subsec:b3order}: show that the order in which cycles are connected by $b_3$'s does not matter;
\item[(7)] Section \ref{subsec:mincycle}: arrange each remaining minimal cycle into the form produced by Algorithm \ref{alg:cycle-to-gen}.
\end{enumerate}

Together, Stages 1 through 7 reduce an arbitrary composite to the canonical form of Definition \ref{def:nf}.

\subsection{Removing Excess $\mu$'s and $\Delta$'s}\label{subsec:excess}

Consider a connected tf-cobordism $f \colon n \to m$. By traversing each boundary component of the surface, we record the interval cycles $\lambda_1, \dots, \lambda_p$, labeled following the convention of
Notation \ref{Notation1} so that $\min(\lambda_1) < \cdots < \min(\lambda_p)$, with $a_1, \dots, a_p$ incoming boundary intervals and $a'_1, \dots, a'_p$ outgoing boundary intervals, together with $c$ holes on the surface. Write $g$ for the number of $b_2$'s within the surface. Before removing the excess $\mu$'s and $\Delta$'s, we first count how many of each are required.

\subsubsection*{Counting the required pants}

Consider an arbitrary interval cycle $\lambda_i$, involving $a_i$ incoming intervals and $a'_i$ outgoing intervals. Viewed as an independent, minimal morphism $a_i \to a'_i$, the cycle can contain $\#\mu \geq a_i - 1$ and $\#\Delta \geq a'_i - 1$ (\Cref{Sec:pants-minimal-surface}). The remaining contributions to $\#\mu$ and $\#\Delta$ come from the topology of the connected surface as a whole:

\begin{itemize}
\item each non-interval boundary component (that is, each hole) can be factored out as a $b_1$. If there is a number $c$ of $b_1$'s, these contribute at least  $c$  additional $\mu$'s and $\Delta$'s;
\item each $b_2$ contributes a minimum of two $\mu$'s and two $\Delta$'s, so the $g$ many $b_2$'s contribute $2g$ additional generators of each type;
\item since the surface is connected, $(p-1)$ copies of the connector $b_3$ are required to join the interval cycles, contributing at least $(p-1)$ additional $\mu$'s and $\Delta$'s;
\item Lastly, the composite mechanism that connects one cycle to another can require more than simply a $b_3$ (as explained in section \ref{Sec:use-of-b3}). To count the additional $\Delta$'s required, we first count the number of cycles for which $a'_i = 0$, i.e., the number of cycles with no outgoing boundary intervals. Write $w$ for the number of such cycles. If disconnected from other cycles, a cycle with no outgoing intervals requires a counit, $\epsilon$, within the cycle. If such a cycle has to be connected to another one, Example \ref{ex:connecting} indicates that there is no need for additional $\Delta$ attached to the $b_3$. Conversely, for each cycle where $a'_i > 0$, a $\Delta$ is required to connect to the boundary component to the associated outgoing boundary. This results in at least $(p-1-w)$ additional $\Delta$'s required for the surface.

Similarly, we write $v$ for the number of interval cycles where $a_i = 0$.  The number of additional $\mu$'s required to connect cycles together is at least $(p-1-v)$.

\end{itemize}

Summing these contributions, the surface numbers of $\mu$'s and $\Delta$'s in any surface is bounded below by the  expressions in Equation \ref{eq:req-pants-bound}
\begin{equation}\label{eq:req-pants-bound}
\#\mu \geq \sum_{i=1}^p (a_i - 1) + c + 2g + 2(p - 1) - v, \qquad \#\Delta \geq \sum_{i=1}^p (a'_i - 1) + c + 2g + 2(p - 1) - w.
\end{equation}

This bound becomes sharp for the Normal Form of a $\TFS$ as defined in Definition \ref{def:nf}. 
Using the relations of the category described in Section \ref{Sec:category-description}, we will show in what follows that any tf-cobordism can be reduced to one which satisfies Equation \ref{eq:req pants}. 
\begin{equation}
\label{eq:req pants}
    \#\mu = \sum_{i=1}^{p} (a_i - 1) + c + 2g + 2(p - 1) - v;~ \#\Delta = \sum_{i=1}^{p} (a'_i - 1) + c + 2g + 2(p - 1) - w.
\end{equation}

\subsubsection*{The removal procedure}
\begin{algorithm}[Removing excess $\mu$'s and $\Delta$'s]\label{alg:remove-excess}
Apply the following steps to any connected tf-cobordism until it satisfies Equation \ref{eq:req pants}.
\begin{enumerate}
\item Apply relations 9 and 10, and relation 5, to any applicable composites.
\item For every remaining excess $\mu$, there must exist a composite $\Delta \circ \iota$. For each such composite, identify the $\mu$ attached to its right and apply relations 2 and 3 to pull it toward the $\Delta \circ \iota$. Exactly one of the following holds:
\begin{enumerate}[label=(\alph*)]
\item only the ``top leg" of the $\mu$ is glued to the ``bottom leg" of the $\Delta$, or vice versa: apply relation 2 and then relation 5, and the excess $\mu$ is eliminated;
\item both legs of the $\mu$ are connected to both legs of the $\Delta$, so the composite is $b_1 \circ \iota$: there must be another $\mu$ connected on the right; use relations 4 and 2 to pull this $\mu$ leftward, then relation 5 to eliminate it;
\item the $\mu$ is glued only to the ``same leg" as the $\Delta$: one of the three cases below occurs, and is resolved in Steps 3--5.
\begin{enumerate}[label=(\roman*)]
\item the composite is a saddle;
\item there is another $\Delta$ connected to both the $\mu$ and the $\Delta \circ \iota$, giving one of the two composites in \Cref{Fig:Excess}, box c-ii;
\item there is another $\Delta$ connected to both the $\mu$ and the $\Delta \circ \iota$, giving one of the two composites in \Cref{Fig:Excess}, box c-iii;.
\end{enumerate}
\end{enumerate}
\item \textbf{Resolving case (c)(i), the saddle.} If the saddle has cycle $(1~1'~2~2')$ and the $\mu$ and $\Delta$ are connected via their ``bottom legs", apply relation 11 so that the ``top legs" are glued. If instead the saddle has cycle $(1~1'~2~2')$ but the pants are connected via their ``top legs", apply relation 11 so that the ``bottom legs" are glued, placing the outer twist on the right. If the saddle has cycle $(1~2'~2~1')$, apply relation 12 to switch the legs connecting the $\mu$ and $\Delta$, placing any outer twists on the right. Finally, apply relation 10 and then relation 5 to eliminate the $\mu$ from the saddle.
\item \textbf{Resolving case (c)(ii).} This composite resembles composite (2) from the $b_3$-factorization of \Cref{subsec:b3fact}. Since we are within a single interval cycle, its two boundary components must be merged into one by a generator to the right; this can only happen if another $\mu$ connects the two outgoing intervals, producing $b_2 \circ \iota$. Identify the $\mu$ attached to the right of this composite. If it cannot be pulled past the $b_2$ using relations 3 and 2, use the $b_2$ relation to switch the saddle from $(1~2'~2~1')$ to $(1~1'~2~2')$ (or vice versa), then use relations 3 and 2 to pull the $\mu$ leftward. Apply relation 5 to eliminate it.
\item \textbf{Resolving case (c)(iii).} There must exist a $\mu$ connecting the $b_3$ and the identity, creating a $b_2$. Since the $\Delta \circ \iota$ composite corresponds to an excess $\mu$, identify this additional $\mu$ and pull it leftward using the same relations as in case (c)(ii); apply relation 5 to eliminate it.
\item Following a symmetric analog of this process, eliminate any remaining excess $\Delta$'s by manipulating all corresponding $\epsilon \circ \mu$ composites.
\end{enumerate}
\end{algorithm}

\begin{center}
  \includegraphics[width=1.05\textwidth]{Normal/Excess.pdf}
  \captionof{figure}{Flowchart for reducing excess $\mu$'s. The flowchart for reducing excess $\Delta$'s in an analog of this one.}{\label{Fig:Excess}}
\end{center}

\subsection{Factoring out $b_3$ }\label{subsec:b3fact}

\begin{algorithm}[Factoring out $b_3$]\label{alg:b3-factorization}
Apply the following steps to a connected surface with more than one interval cycle.
\begin{enumerate}
\item Identify a connected surface component $C_1$ in which each generator touches two interval cycle boundary components; see \Cref{fig:b_3 identification} for an example. Mark the leftmost and rightmost generators of $C_1$; these are $\Delta$'s on the left and $\mu$'s on the right, and will be called the \emph{endpoints}.

  \begin{center}
           \includegraphics[width=0.3\linewidth]{Normal/Component-C1.pdf}
    \captionof{figure}{The boxed composite is a $C_1$ surface component. }
    \label{fig:b_3 identification}
  \end{center}

\item Between the endpoints, check whether another surface component $C_2$ is connected to $C_1$ so that $C_2$ touches only one interval cycle boundary; see \Cref{fig:b3-c2} for an example.

\begin{center}
    \includegraphics[width=0.45\textwidth]{Normal/C2-component.pdf}
    \captionof{figure}{The boxed composite is a  $C_2$ component.}\label{fig:b3-c2}
\end{center}

\item Let $\ell$ be the number of additional boundary components within $C_1 \cup C_2$. For each such component, factor out a $b_1$.
\item The component $C_1 \cup C_2$ is \emph{self-contained} if all of its incoming and outgoing intervals lie in $C_1$. Two cases follow.
\begin{enumerate}[label=(\alph*)]
\item \textbf{$C_1 \cup C_2$ is not self-contained.} Then there is a saddle $k$ between the two endpoints such that at least two intervals are permuted to the exterior of $C_1$; see \Cref{fig:b3-not-contained} for an example.

\begin{center}

\includegraphics[width=0.45\textwidth]{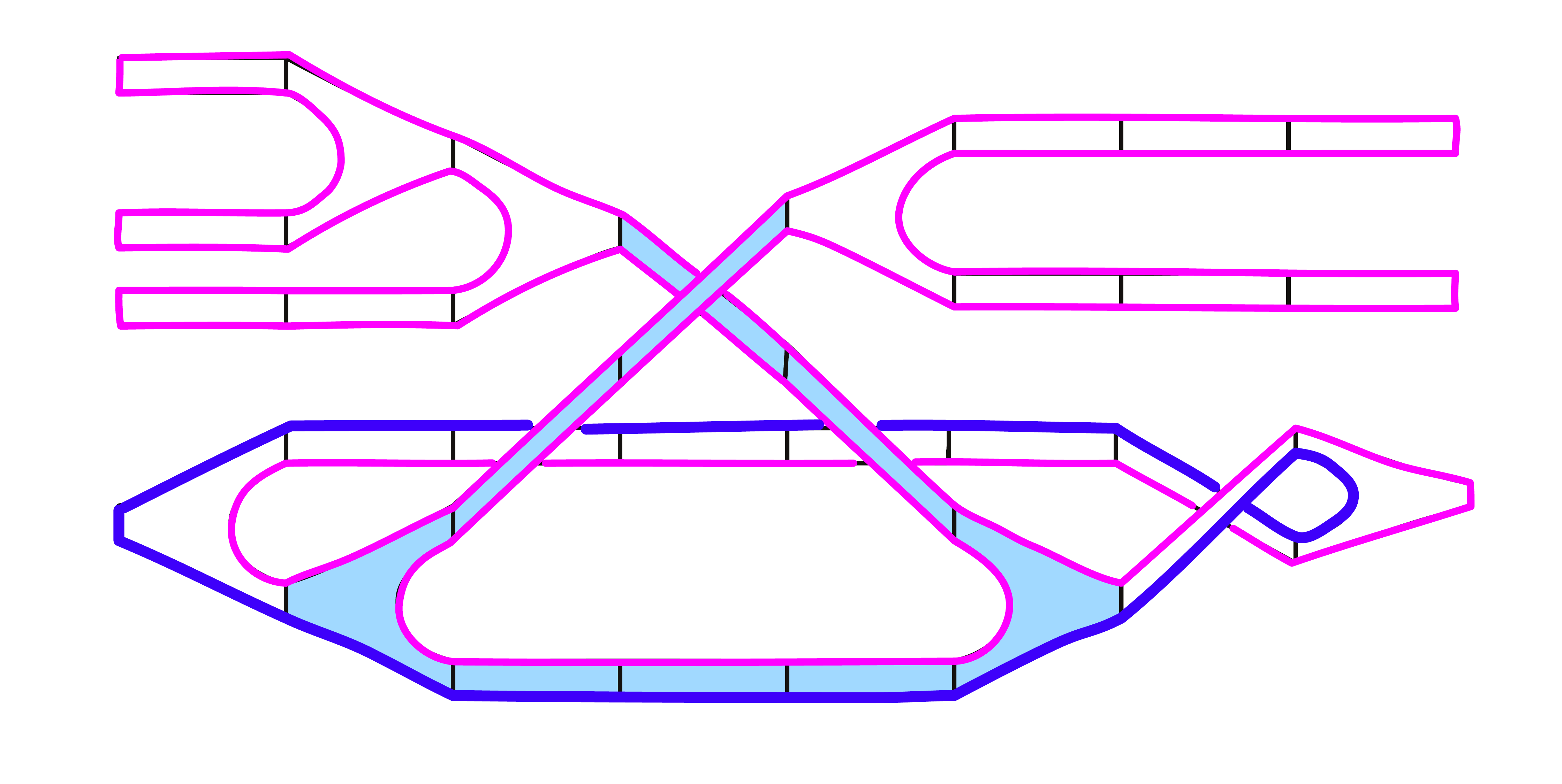}
    \captionof{figure}{The composite shaded in blue is the saddle that results in $C_1 \cup C_2$ being non-contained.}\label{fig:b3-not-contained}
\end{center}

\begin{enumerate}[label=\arabic*.]
\item Check whether any additional boundary components lie in the interior of $C_1 \cup C_2$, i.e.\ whether the hole is not permuted through the saddle. If so, apply Algorithm \ref{alg:b1-factorization} to factor out all such $b_1$'s. 
\item Look for any of the four representations of $b_2$'s from Corollary \ref{cor: b2 representations} and pull them outside of the endpoints using Lemma \ref{$b_2$ relation} together with relations 2, 3, and 4.

\item Move the endpoint(s) of $C_1$ past the saddle $k$. Using Corollary \ref{cor: saddle representations}, reduce the saddle to either $S_1$ or $S_2$. If both pants in $k$ touch both interval cycle boundaries, apply Proposition \ref{saddle relations}  so that only the $\Delta$ of $k$ touches both boundaries. Pull the left $\Delta$ endpoint of $C_1$ past $k$ using relations 2, 3, and 7, then past the $\mu$ endpoint to its right using relation 2.
\item After applying relation 2 where applicable, identify the new endpoints and repeat step (a) if the surface component is still not self-contained.
\end{enumerate}
\item \textbf{$C_1 \cup C_2$ is self-contained and $C_2 = \varnothing$.} Then the component is a morphism $j \to j$ for some $j \in \mathbb{Z}^+$. We consider different cases depending on the value of $j$:
\begin{enumerate}[label=\arabic*.]
\item If $j = 1$, the component is already a $b_3$.
\item If $j = 2$, the component is two conjoined saddles. Using relations 2, 6, and 7, pull the $\mu$'s and $\Delta$'s past each other so the two saddles sit side by side; this produces the four possible composites, up to saddle equivalence, shown in \Cref{fig:j2-possibilities}. 

   \begin{center}
 \includegraphics[width=0.5\linewidth]{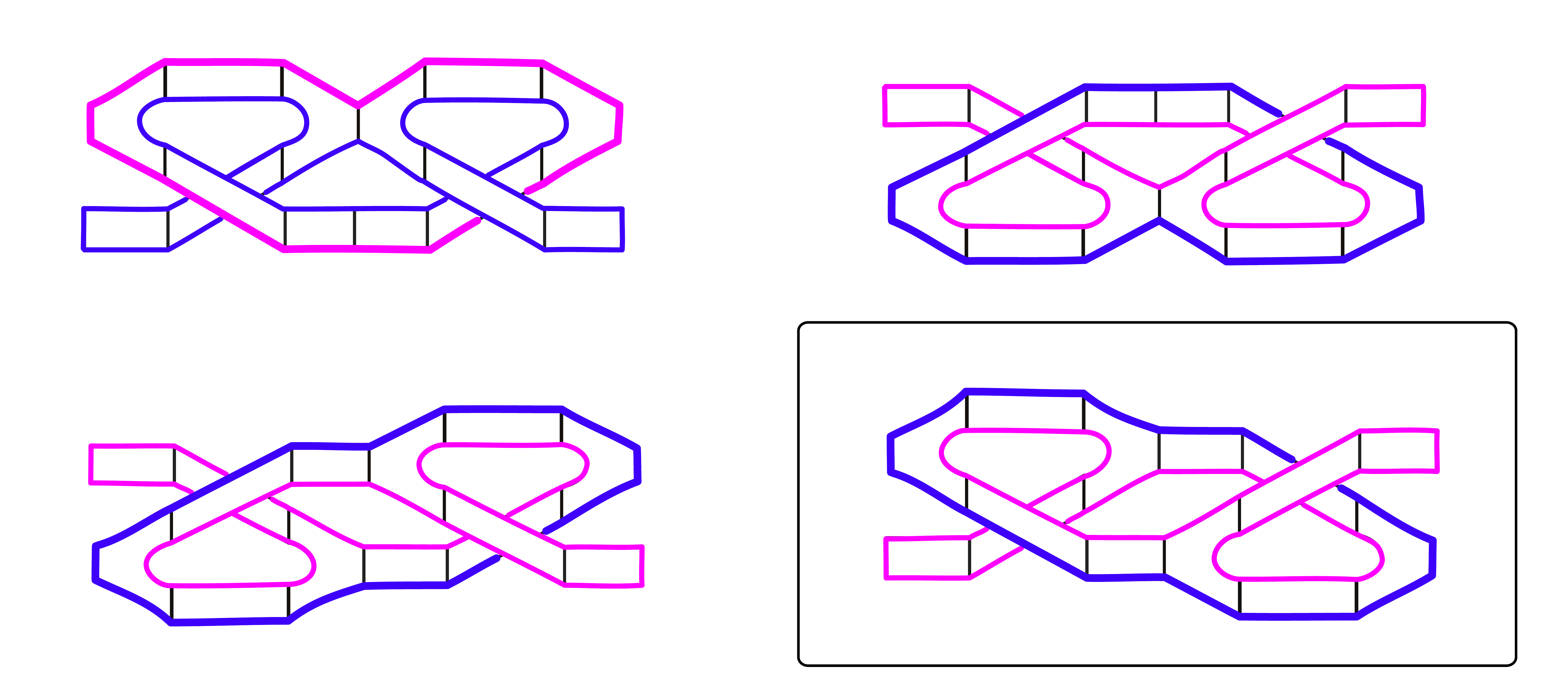}
    \captionof{figure}{Four side-by-side saddles after rearranging cojoined saddles}
    \label{fig:j2-possibilities}
\end{center}

Using relations 11 and 12 where applicable, switch to the bottom-right composite of \Cref{fig:j2-possibilities}, placing any outer twists on the left. On the right saddle, use relation 1 to swap the $\Delta$ with the identity above it; see \Cref{fig:j2-right-saddle}. 

\begin{center}
   \includegraphics[width=0.5\linewidth]{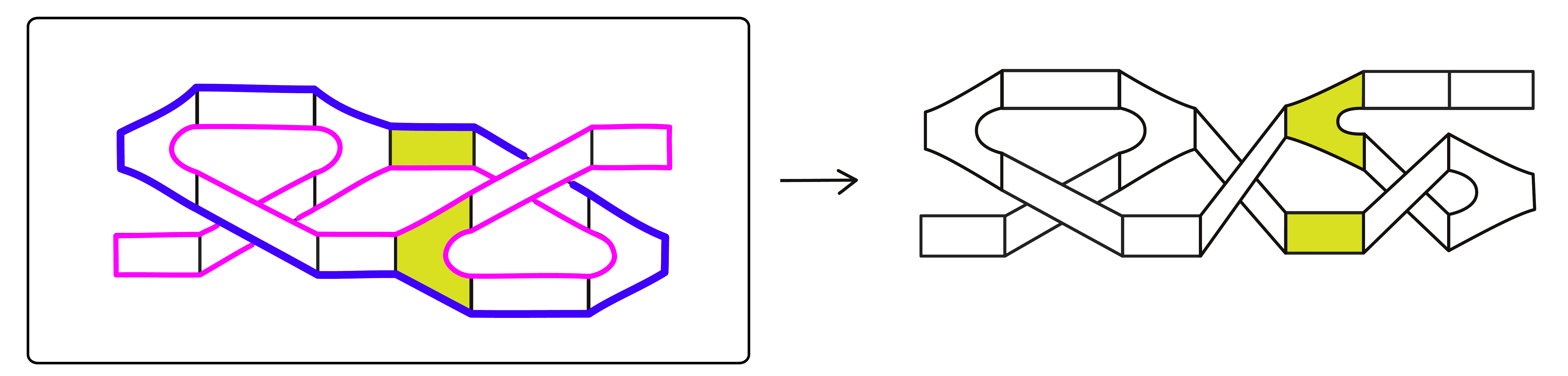}
    \captionof{figure}{Further transformations on the cojoined saddles}
    \label{fig:j2-right-saddle}
    
\end{center}

\medskip

Using relations 11 and 12, move the twist all the way to the left of the composite. Using relations 2, 6, and 7, pull the center $\mu$ to the right until it meets the rightmost $\mu$, and pull the $\Delta$ until it meets the leftmost $\Delta$, as in \Cref{fig:j2-final}.

\begin{center}
    \includegraphics[width=0.8\linewidth]{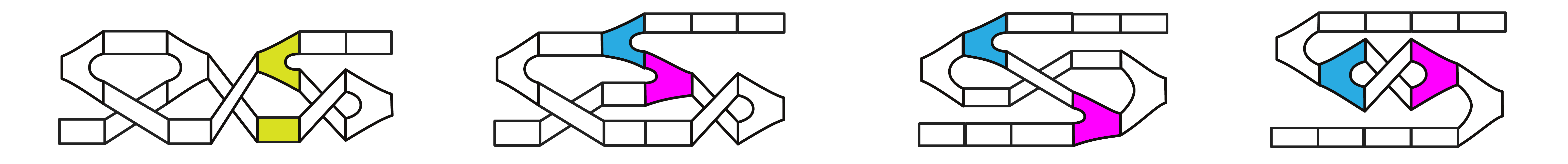}
    \captionof{figure}{Factoring out $b_3$}
    \label{fig:j2-final}
\end{center}

Finally, apply relations 3 and 4 to factor out the $b_3$.
\medskip

\item If $j > 2$, the component looks like the $j = 2$ case with $j - 2$ additional $\mu$, $\Delta$ pairs   of pants between two ``stacked" saddles; see \Cref{fig:jgt2}. 

\begin{center}
 \includegraphics[width=0.35\linewidth]{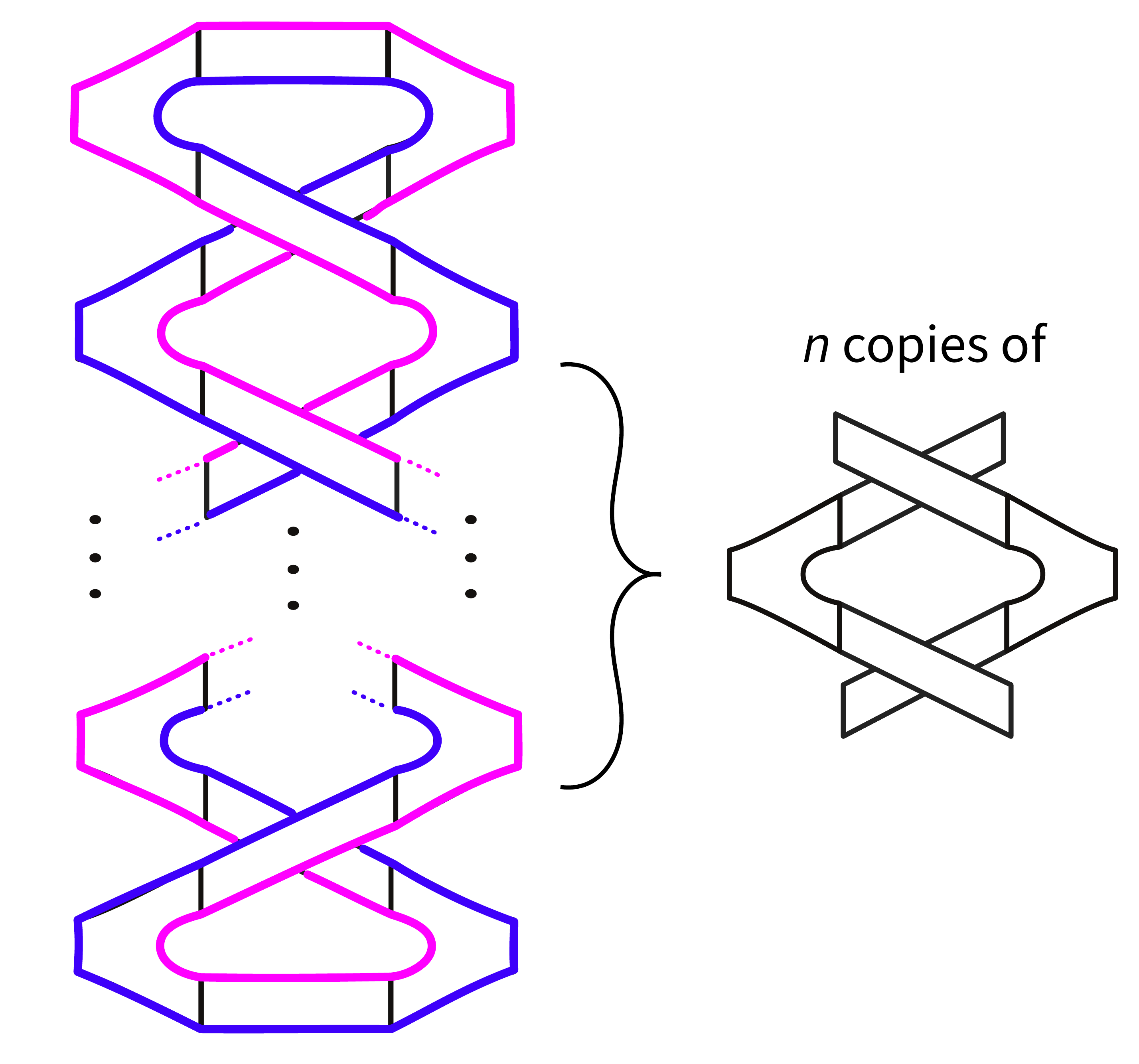}
    \captionof{figure}{Stacked saddles}
    \label{fig:jgt2}
\end{center}

Reduce to the $j = 2$ case as follows. Apply relations 6 and 7 to pull the $j - 2$ stacked $\mu$'s and $\Delta$'s to the top-left and top-right, so that relation 2 becomes applicable; see \Cref{fig:jgt2-pulling}. 

\begin{center}
   \includegraphics[width=0.35\linewidth]{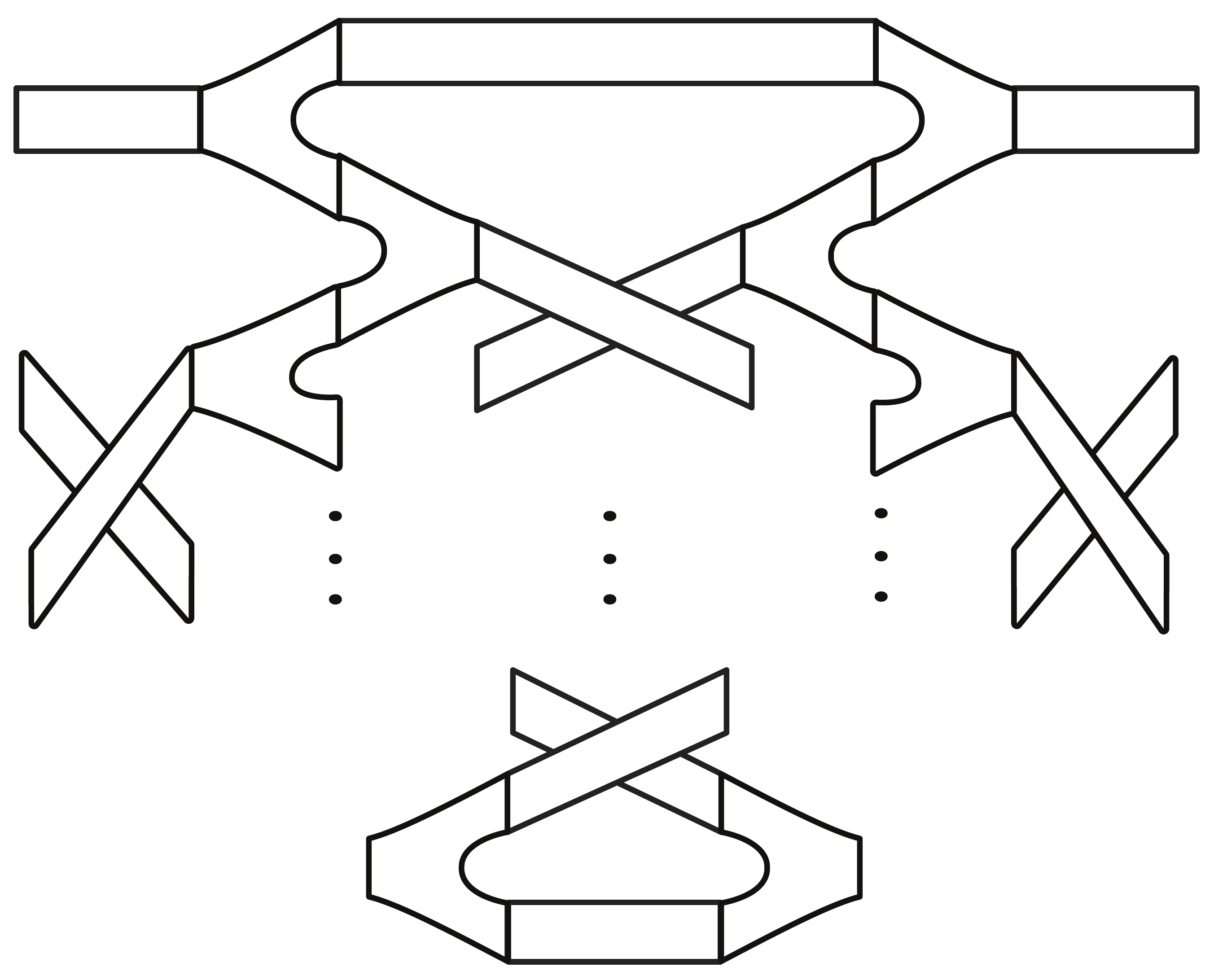}
    \captionof{figure}{Reorganizing stacked saddles}
    \label{fig:jgt2-pulling}

\end{center}

Apply relation 2 to all such composites; this will pull all $\mu$'s on the left side and all $\Delta$'s on the right side away from the saddles. Then, using relations 3, 4, 6, and 7, pull any remaining non-saddle $\mu$'s and $\Delta$'s in-between the two saddles to the left and right respectively. The resulting composite is the $j = 2$ case in the center, which we reduce accordingly as above.
\end{enumerate}
\end{enumerate}
\end{enumerate}
\end{algorithm}

The flowchart in Figure \ref{Fig: b3-factorization} gives an overview of Algorithm \ref{alg:b3-factorization}.

\begin{center}
   \includegraphics[width=0.9\linewidth]{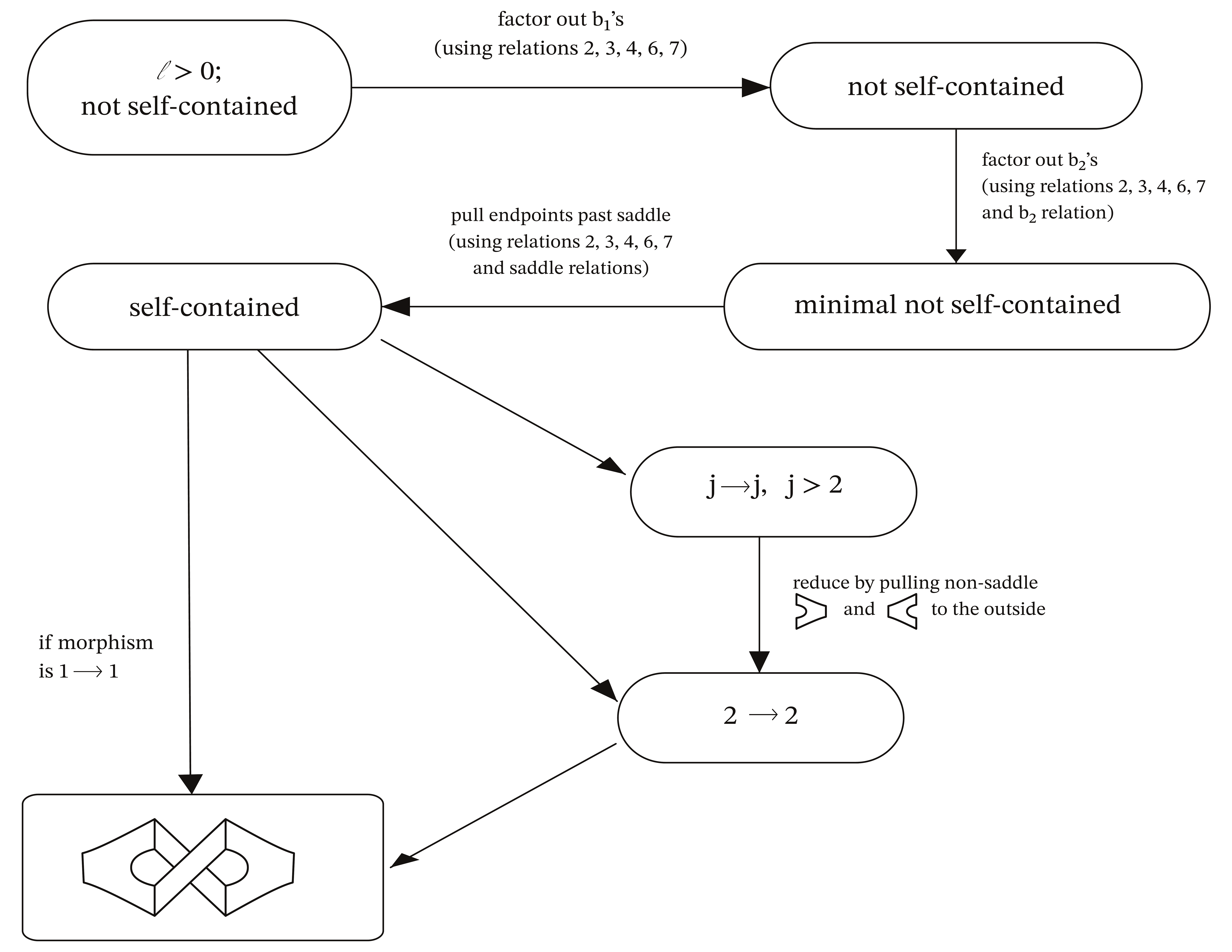}
    \captionof{figure}{Summary of procedure to factor $b_3$}
    \label{Fig: b3-factorization}

\end{center}

\subsection{Factoring out $b_1$} \label{sec: b1 factorization}

Each additional boundary component of an interval cycle corresponds to exactly one $b_1$; the following algorithm factors these out.

\begin{algorithm}[Factoring out $b_1$]\label{alg:b1-factorization}
Apply the following steps to an interval cycle with additional boundary components.
\begin{enumerate}
\item Mark all additional boundary components of the interval cycle.
\item Mark all $\mu$'s and $\Delta$'s where a marked boundary component goes on the ``inseam" of the pants;  see \Cref{fig:b_1 identification} for an example.

\
\begin{center}
     \includegraphics[width=0.55\linewidth]{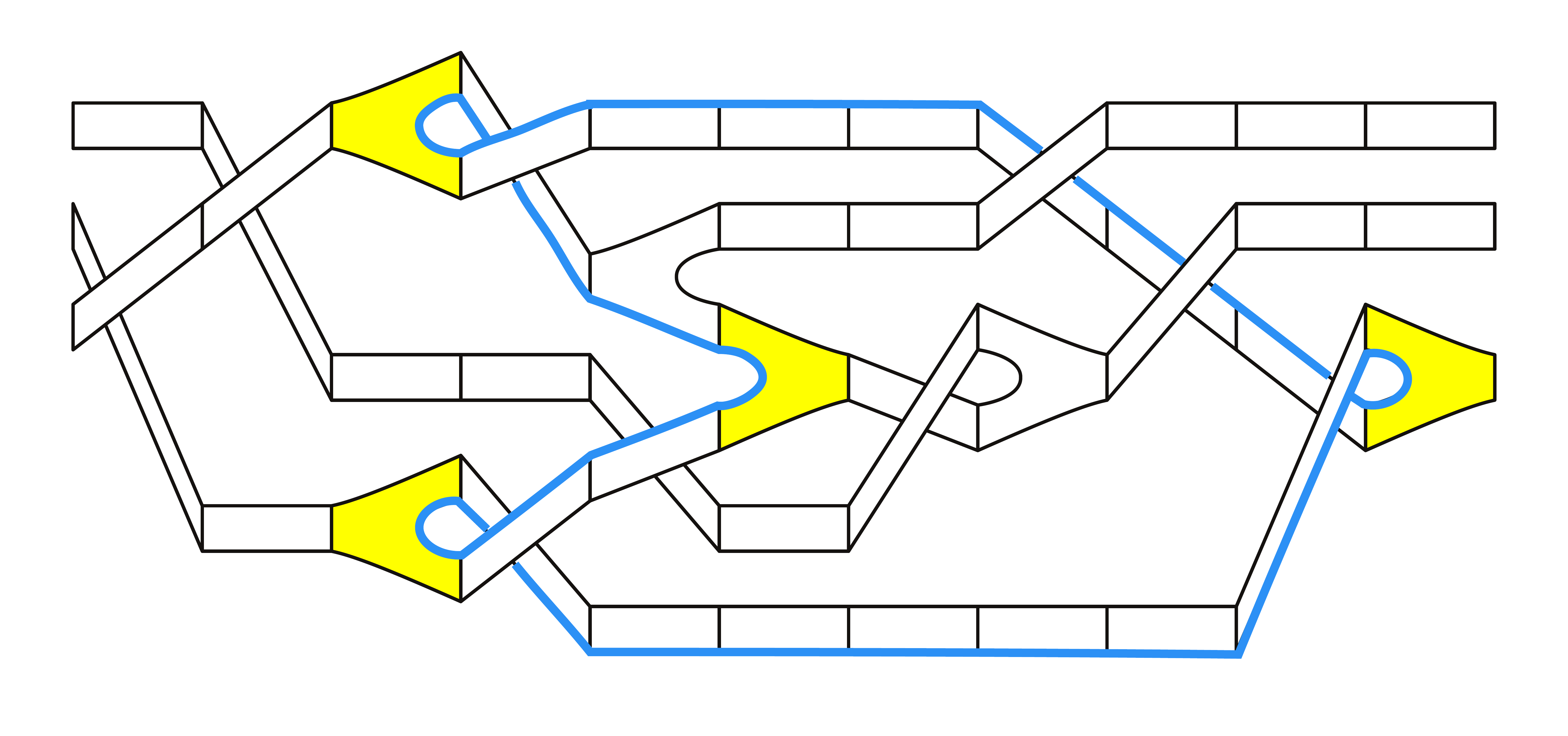}
    \captionof{figure}{The marked $\mu$'s and $\Delta$'s for this hole appear shaded in this figure}
    \label{fig:b_1 identification}
\end{center}

\item Exactly one of the following two situations occurs.
\begin{enumerate}[label=(\alph*)]
\item \textbf{Only one pair of pants is marked.} Then either a $\Delta \circ \iota$ or an $\epsilon \circ \mu$ lies inside the boundary component; see \Cref{fig:capped b3} for both possibilities. Apply relation 13 to factor out the $b_1$.

\begin{center}
        \includegraphics[width=0.4\linewidth]{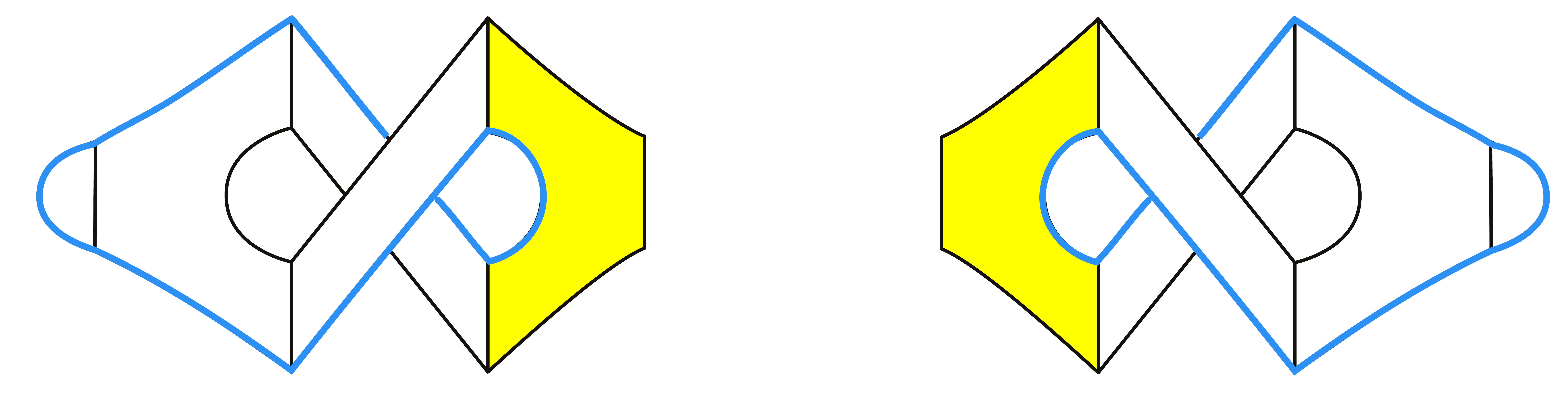}
    \captionof{figure}{Two possibilities with only one marked pant ($\Delta$ or $\mu$)}
    \label{fig:capped b3}
\end{center}

\item \textbf{Multiple pairs of pants are marked.} Then no $\Delta \circ \iota$ or $\epsilon \circ \mu$ composite is enveloped inside the boundary, so the number of marked $\mu$'s equals the number of marked $\Delta$'s. Reduce this common number to $1$ as follows:
\begin{enumerate}[label=\arabic*.]
\item Every marked $\mu$, $\Delta$ pair forms a saddle. Pick one such saddle and, using Corollary \ref{cor: saddle representations}, reduce it to either $S_1$ or $S_2$.
\item Apply Proposition \ref{saddle relations} to switch from a ``top leg"-connected saddle to a ``bottom leg"-connected saddle, placing any outer permutation morphisms away from the boundary component. This eliminates one marked $\mu$ and one marked $\Delta$.
\item Repeat steps 1--2 until exactly one marked $\mu$ and one marked $\Delta$ remain.
\end{enumerate}
Finally, pull the remaining $\mu$ and $\Delta$ together using relations 3, 4, 6, and 7,  and, if applicable, apply relation 1 to finalize the $b_1$ factorization. 
\end{enumerate}
\end{enumerate}
\end{algorithm}

\begin{center}
  \includegraphics[width=1.05\textwidth]{Normal/Factorization-b1.pdf}
  \captionof{figure}{Summary of the procedure to factor $b_1$s.}{\label{Fig:Excess}}
\end{center}

\subsection{Factoring out $b_2$}\label{subsec:b2fact}

    Before putting minimal cycle coupons into normal form, we must factor out all $b_2$'s from the cycle. By Corollary \ref{cor: b2 representations}, $b_2$'s have a total of four representations. Since we reduced to the minimum \#$\mu$ and \#$\Delta$ and factored out all $b_1$'s ($c$), we determine \#$b_2$ ($g$) by counting the remaining $\mu$'s and $\Delta$'s then solving for $g$ in equation \ref{eq:req pants}. 
    
\begin{algorithm}[Factoring out $b_2$]\label{alg:b2-factorization}
Suppose $g > 0$.
\begin{enumerate}
\item Check for any $b_3$'s on the interior of the cycle.
\begin{enumerate}[label=(\alph*)]
\item If any are present, each corresponds exactly to one of the two $b_3$-representations of $b_2$. Identify the $\Delta$ on the left and the $\mu$ on the right of the $b_3$ that connect via the same leg, and apply relations 2, 3, 4, and 7, where applicable, to pull these pants together and factor out the $b_2$.
\item If no $b_3$'s remain and $b_2$'s are still present, look instead for the two saddle-based representations: for each saddle within the interval cycle, check for a $\Delta$ to its left and a $\mu$ to its right that together create a $b_2$.
\end{enumerate}
\item Continue checking $b_3$'s and saddles until all $b_2$'s have been found and factored out.
\end{enumerate}
\end{algorithm}

\subsection{Topology coupon Construction}\label{subsec:coupon}

\begin{algorithm}[Assembling the topology coupon]\label{alg:coupon}
Apply the following two steps to move the factored $b_1$'s and $b_2$'s to the end of the surface.
\begin{enumerate}
\item \textbf{Placing the $b_1$'s.} To pull each $b_1$ past the minimal components of an interval cycle, use relations 2, 3, 4, and 7. If a $b_1$ meets a $b_2$ or $b_3$, Lemma \ref{lem: b1, b2, b3 commutativity} lets us move the $b_2$ (respectively $b_3$) past it. Apply these relations until all $c$ of the $b_1$'s sit at the end of interval $1'$.
\item \textbf{Placing the $b_2$'s.} To pull each $b_2$ past the minimal components, use the $b_2$ relation (Lemma \ref{$b_2$ relation}) together with relations 2, 3, 4, and 7 to move it past $\mu$'s and $\Delta$'s. By Lemma \ref{lem: b1, b2, b3 commutativity}, move all $b_2$'s past any $b_3$'s as well, until all $g$ of the $b_2$'s sit at the end of interval $1'$.
\end{enumerate}
\end{algorithm}

\subsection{Ordering of $b_3$'s }\label{subsec:b3order}

If $p>2$ i.e. there are more than two cycles in the connected surface, there are multiple ways to connect cycles via $b_3$'s. We denote the connection of two cycles, $\lambda_i,\lambda_j$, through a $b_3$ composite as $\{\lambda_i,\lambda_j\}$. For three cycles $\lambda_1, \lambda_2, \lambda_3$, there are three ways we can connect them: 
\begin{enumerate}
    \item \{\{$\lambda_1,\lambda_2$\}, \{$\lambda_1,\lambda_3$\}\};
    \item \{\{$\lambda_2,\lambda_3$\}, \{$\lambda_2,\lambda_1$\}\};
    \item \{\{$\lambda_3,\lambda_1$\}, \{$\lambda_3,\lambda_2$\}\}.
\end{enumerate}

\begin{remark}
    The way in which we can connect $p$ cycles with ($p-1$) $b_3$'s is the same as drawing a labeled tree on $p$ vertices where each vertex corresponds to a cycle and each edge corresponds to a $b_3$. The number of ways in which you can draw such a tree is given by Cayley's formula: $p^{p-2}$.
\end{remark}

\begin{lemma} \label{lem: equivalence of connecting three cycles}
    The three ways to connect three cycles $\lambda_1, \lambda_2,$  and $\lambda_3$ are equivalent by the relations.
\end{lemma}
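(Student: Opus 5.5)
The plan is to reduce the statement to a single local move and prove that move diagrammatically with the relations already available. Write $X_i$ for the minimal cycle surface $A(\lambda_i)$, with the modification of Remark~\ref{rem:connection} when it has no incoming or no outgoing intervals. The three arrangements (1)--(3) are the three labelled trees on $\{\lambda_1,\lambda_2,\lambda_3\}$, namely the stars centred at $\lambda_1$, $\lambda_2$ and $\lambda_3$. Any two of them differ by one edge swap: in (1), replace $\{\lambda_1,\lambda_3\}$ by $\{\lambda_2,\lambda_3\}$ and (2) results. By symmetry it therefore suffices to show one thing. If $\lambda_3$ hangs by a $b_3$ off a tube belonging to $\lambda_1$, and $\lambda_1,\lambda_2$ are already joined by a $b_3$, then the attaching $b_3$ can be slid off $\lambda_1$ onto $\lambda_2$.

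First I will put the composite into a convenient position. Using Lemma~\ref{lem: b1, b2, b3 commutativity} and the $b_3$ relations of Lemma~\ref{lem: b3 connector relations}, I move the connector $\{\lambda_1,\lambda_3\}$ so that it sits immediately beside the connector $\{\lambda_1,\lambda_2\}$, along a single strand of the $\lambda_1$ surface. The naturality of $b_3$ used here, its commutativity with all other morphisms, is also \cite[Propositions~2.1--2.2]{TFS}, and it lets the connector slide along any strand of $\lambda_1$ by relations 2, 3, 4, 6 and 7. Locally the picture then becomes a single strand of $\lambda_1$ carrying two consecutive comultiplication–multiplication junctions, one leading to the $\lambda_2$ piece and one to the $\lambda_3$ piece. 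In the cases $a'_k=0$ or $a_k=0$ the counit or unit has been replaced by a $b_3$ glued to the free end. Here I will run the same argument with that end as the attaching strand; this gives the four cases of Figure~\ref{Fig:four-cases-connect-b3}.

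The key step is a coassociativity/associativity argument. Expand each $b_3=\mu\circ P\circ\Delta$. The two stacked $\Delta$'s on the $\lambda_1$ strand can be reassociated by relation 4 (coassociativity), and the two $\mu$'s by relation 3, so that the $\lambda_3$ branch splits off the output leg of the $\lambda_2$-side pants rather than off $\lambda_1$ directly. Relations 6 and 7 then move the permutations past the pants, and the saddle relations of Proposition~\ref{saddle relations} convert the resulting saddle twist into the form required. The outcome should be exactly a $b_3$ joining $\lambda_2$ to $\lambda_3$, together with the original $b_3$ joining $\lambda_1$ to $\lambda_2$. As a consistency check, no $\mu$ or $\Delta$ is created or destroyed, so the count of Equation~\ref{eq:req pants} is preserved. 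Finally I slide the new connector back along the $\lambda_2$ strand to its standard position, using Lemma~\ref{lem: b3 connector relations} again.

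I expect the main obstacle to be the middle step. After the reassociation, the twists $P$ coming from the two connectors interleave, and one must check that they can be untangled into a single $P$ inside the new $b_3$ without changing any interval cycle. If a direct untangling fails, I would first factor the local piece as in Algorithm~\ref{alg:b3-factorization}, case (b) with $j=2$: two conjoined saddles are rearranged side by side, and the same moves of Figures~\ref{fig:j2-right-saddle}--\ref{fig:j2-final} re-factor the $b_3$ onto the other pair of cycles. The remaining cases, where some cycle has no incoming or no outgoing intervals, then follow by the analogous argument with the roles of $\mu$ and $\Delta$ reversed.
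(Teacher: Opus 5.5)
Your outer structure agrees with the paper's proof: a direct chain of relation moves when all three cycles have incoming and outgoing intervals, intermediate pants pushed aside with Lemma~\ref{lem: b3 connector relations} and relations 2--4, and the unit/counit cases by analogy. Reducing to a single slide is also harmless, since any two trees on three vertices differ by one edge swap. But that slide \emph{is} the lemma, and you never derive it. You say the outcome ``should be'' a $b_3$ joining $\lambda_2$ to $\lambda_3$ and leave the untangling of the twists open. The paper's proof consists precisely of that verification, namely the move sequences of Figures~\ref{fig: cycle connection equiv 1} and~\ref{fig: cycle connection equiv 2}.

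The mechanism you sketch does not supply this step. Re-bracketing stacked pants by relations 3 and 4 never moves a branch point across the twist of a connector. So afterwards the $\lambda_3$ foot still lies on the $\lambda_1$ side of the $\{\lambda_1,\lambda_2\}$ connector, and the tree is still the star at $\lambda_1$. The foot has to be carried through $b_3=\mu\circ P\circ\Delta$ itself:
\begin{itemize}
\item past its $\Delta$, by coassociativity;
\item through its $P$, by naturality;
\item past its $\mu$, by the Frobenius relation, which your key step does not invoke.
\end{itemize}
The leftover twist must then be removed using the symmetry encoded in Proposition~\ref{saddle relations}.

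That last part is not bookkeeping. The slide already contains the identity $(b_3\otimes b_3)\circ\Delta=(\id\otimes b_3)\circ\Delta\circ b_3$. Both sides are the connected genus-$0$ surface $1\to 2$ whose three boundary components each carry one interval. The left side is the star centred at the incoming interval's cycle, and the right side is the star centred at an outgoing one. This identity fails in a non-symmetric Frobenius algebra, where (co)associativity, the Frobenius relation, the (co)unit laws and naturality of $P$ all still hold. Take $A=M_2(k)$ with $\varepsilon(a)=\mathrm{tr}(Da)$ and $D$ invertible, diagonal and non-scalar. Then $b_3(a)=\mathrm{tr}(a)\,D^{-1}$, and the two sides evaluate to $\mathrm{tr}(aD^{-1})\,D^{-1}\otimes D^{-1}$ and $\mathrm{tr}(a)\,D^{-2}\otimes D^{-1}$. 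The missing content is therefore exactly how the symmetry relations dispose of the interleaved twists.

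Your fallback does not close the gap. Case (b), $j=2$, of Algorithm~\ref{alg:b3-factorization} applies to a self-contained block whose generators all touch the same two cycle boundaries, and it extracts a $b_3$ onto a single strand. The local piece here meets three cycle boundaries, and that procedure gives no control over which pair of cycles the extracted connector joins.

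There is also a problem in your positioning step. Lemma~\ref{lem: b1, b2, b3 commutativity} only says that $b_1,b_2,b_3$ commute with each other on one strand. \cite[Propositions~2.1--2.2]{TFS} are topological statements, so appealing to them inside a sufficiency argument is circular; the introduction makes exactly this point about $b_3$. Moving the connector feet along $\lambda_1$ has to come from Lemma~\ref{lem: b3 connector relations} together with relations 2, 3, 4, 6 and 7, as in the paper.
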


\begin{proof}

  Figures \ref{fig: cycle connection equiv 1} and \ref{fig: cycle connection equiv 2} (given in the Appendix) prove that (modulo relation 1) any $b_3$ connection between three cycles with both incoming and outgoing boundaries are equivalent. All other cases which require different connector composites (e.g. one cycle with only incoming boundaries, another with only outgoing, and a third with incoming and going) are analogous to this case. This is because each of the connector composites (see Figure \ref{Fig:four-cases-connect-b3}) admit a representation involving a saddle; by applying Proposition \ref{saddle relations} and pulling saddles past each other, we achieve the same equivalences for all cases. If any  pairs of pants lie in-between $b_3$ composites, we can pull the pants past them by Lemma \ref{lem: b3 connector relations} and relations 2, 3, and 4.
\end{proof}

\begin{definition}\label{def: Defining R-moves}
    Define the equivalence relation $R$ on a the three possible labeled trees on a vertex set $\{i,j,k\}$: 
    $\{\{i,j\}, \{i,k\}\} \thicksim_R \{\{j,i\}, \{j,k\}\} \thicksim_R \{\{k,i\}, \{k,j\}\}$. An \emph{R-move} swaps one of the three trees with other identified by $R$.
\end{definition}

\begin{lemma}\label{lem: tree equiv by local three vertex moves}
    Let $T$ be an arbitrary labeled tree on the vertex set $V = \{1, \dots, n\}$. Let $T'$ be the labeled tree on $V$ such that $E(T') = \{\{1,2\}, \dots, \{1,n\}\}$. Then there is a finite sequence of $R$-moves that transforms $T$ into $T'$. 
\end{lemma}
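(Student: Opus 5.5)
The plan is to induct on a potential function that measures how far $T$ is from the star $T'$. First I would reinterpret an $R$-move inside a larger tree. Suppose $T$ contains two edges $\{i,j\}$ and $\{i,k\}$ sharing the vertex $i$. Replace them by $\{j,i\},\{j,k\}$ or by $\{k,i\},\{k,j\}$, and leave all other edges of $T$ unchanged. This is an $R$-move in the sense of Definition~\ref{def: Defining R-moves} applied to the local configuration on $\{i,j,k\}$. Its effect is to delete one edge, say $\{i,k\}$, and insert $\{j,k\}$: the vertex $k$ is detached from $i$ and reattached to a neighbour $j$ of $i$.

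The result is again a labeled tree on $V$. Deleting $\{i,k\}$ splits $T$ into two components, one containing $k$ and the other containing $i$. The edge $\{i,j\}$ is not deleted, so $j$ lies in the component of $i$. Adding $\{j,k\}$ therefore reconnects the two components without creating a cycle.

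For the induction, define
\[
\Phi(T) \;=\; \sum_{v \in V} d_T(1,v),
\]
where $d_T$ is the path distance in $T$. Since every $v\neq 1$ has $d_T(1,v)\geq 1$, we get $\Phi(T)\geq n-1$. Equality holds exactly when every vertex is adjacent to $1$, that is, when $T=T'$. The cases $n\leq 2$ are trivial.

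Now suppose $T\neq T'$. Then some vertex has distance at least $2$ from $1$, and taking the vertex at distance $2$ on its geodesic to $1$ gives a vertex $k$ with $d_T(1,k)=2$. Write the geodesic as $1 - i - k$. The edges $\{i,1\}$ and $\{i,k\}$ share the vertex $i$, so I apply the $R$-move on $\{1,i,k\}$ that recentres this two-edge path at $1$. This replaces $\{i,1\},\{i,k\}$ by $\{1,i\},\{1,k\}$, and the new tree $\tilde T$ is obtained from $T$ by deleting $\{i,k\}$ and adding $\{1,k\}$.

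Next I check how distances change. Let $B$ be the branch of $T$ cut off by $\{i,k\}$, namely the vertices whose geodesic to $1$ passes through $k$. For $v\in B$ the geodesic to $1$ becomes one step shorter, so $d_{\tilde T}(1,v)=d_T(1,v)-1$. For every other vertex the geodesic to $1$ does not use $\{i,k\}$ and is unchanged. Since $k\in B$, we get $\Phi(\tilde T)\leq \Phi(T)-1$. Because $\Phi$ is a positive integer bounded below by $n-1$, repeating this step must terminate, and it can only terminate when $\Phi=n-1$, that is, at $T'$. This gives the required finite sequence of $R$-moves, of length at most $\Phi(T)-(n-1)$.

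The one point that needs care is the first step: confirming that an $R$-move, defined on three-vertex trees, may legitimately be applied to a sub-configuration of a larger tree with the other edges left unchanged. This is the reading needed to combine the lemma with Lemma~\ref{lem: equivalence of connecting three cycles}. The combinatorics after that is routine.
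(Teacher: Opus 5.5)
Your proof is correct and takes the same route as the paper: find a path $1 - i - k$, apply the $R$-move that recentres it at vertex $1$, and repeat until you reach the star $T'$. The paper only asserts that this repetition terminates, so your potential $\Phi(T)=\sum_v d_T(1,v)$ and your check that the local move keeps $T$ a tree supply what the paper leaves implicit; termination also follows from the simpler fact that each move raises the degree of vertex $1$ by one.
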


\begin{proof}
Consider $T \neq T'$. then there exist two vertices, $v_1, v_2 \in V$, with a path $P = (\{1,v_1\},\{v_1,v_2\})$. Since $P$ defines a subtree on three labeled vertices, we apply $R$ such that $(\{1,v_1\},\{v_1,v_2\}) \to (\{1,v_1\},\{1,v_2\})$. Repeating this process transforms $T$ to $T'$.
\end{proof}

\begin{remark}
    Lemma \ref{lem: tree equiv by local three vertex moves} may be viewed as a restricted analogue of connectivity results for trees under edge-exchanges; see, in particular \cite[Theorem K]{Cummins}. In this context, our results follow from establishing a correspondence between $R$-moves in Definition \ref{def: Defining R-moves} and our relations.
\end{remark}

\begin{corollary}\label{cor:tree-order}
    A connected tf-cobordism with interval cycles $\lambda_1, \dots, \lambda_p$ and an arbitrary arrangement of connector composites between cycles is equivalent to the arrangement: \{\{$\lambda_1,\lambda_2$\}, $\dots$, \{$\lambda_1, \lambda_p$\}\}.
\end{corollary}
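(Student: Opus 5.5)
The plan is to translate the statement into the combinatorial language of labeled trees and then apply Lemma~\ref{lem: tree equiv by local three vertex moves}, with Lemma~\ref{lem: equivalence of connecting three cycles} supplying the algebraic meaning of each $R$-move. After Stages 1--5 (Algorithms~\ref{alg:remove-excess}, \ref{alg:b3-factorization}, \ref{alg:b1-factorization}, \ref{alg:b2-factorization}, \ref{alg:coupon}), the connected surface consists of the minimal cycle pieces $A(\lambda_1),\dots,A(\lambda_p)$ joined by exactly $p-1$ connector composites. To such an arrangement we attach the graph $T$ with vertex set $\{\lambda_1,\dots,\lambda_p\}$ and one edge $\{\lambda_i,\lambda_j\}$ for each connector composite joining $\lambda_i$ and $\lambda_j$. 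Since the surface is connected, $T$ is connected. Since there are $p-1$ edges, $T$ is a tree. Relabelling the cycles by their indices, Lemma~\ref{lem: tree equiv by local three vertex moves} gives a finite sequence of $R$-moves $T=T_0\to T_1\to\cdots\to T_r=T'$, where $T'$ is the star centered at $\lambda_1$.

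The main step is to realise each $R$-move $T_s\to T_{s+1}$ by a sequence of relations on the corresponding composites. An $R$-move only changes a path $\{\lambda_1,\lambda_a\},\{\lambda_a,\lambda_b\}$ into $\{\lambda_1,\lambda_a\},\{\lambda_1,\lambda_b\}$; every other edge of the tree stays fixed. I would therefore proceed in three steps.
\begin{enumerate}[(i)]
\item Isolate the two connector composites involved, together with the three cycle pieces $A(\lambda_1)$, $A(\lambda_a)$, $A(\lambda_b)$, as a sub-composite. This is done by sliding all other connectors and pants out of the way, using Lemma~\ref{lem: b3 connector relations}, the commutativity of Lemma~\ref{lem: b1, b2, b3 commutativity}, and relations 2, 3, 4 and 7.
\item Apply Lemma~\ref{lem: equivalence of connecting three cycles} to this three-cycle sub-composite.
\item Slide the remaining connectors back into place.
\end{enumerate}
The edges attached to $\lambda_b$ from other cycles are unaffected, because the $b_3$ composites touching $\lambda_b$ only see its boundary component, and that boundary component is preserved; this uses the observation in Section~\ref{Sec:use-of-b3} that connecting does not modify the cycles. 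Composing these equalities along the sequence of $R$-moves shows that the original composite equals the star arrangement. This is exactly the arrangement used in $K_p$ (Construction~\ref{con:connected-core}).

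I expect the main obstacle to be step (i), the isolation. Other connectors may attach to $\lambda_1$, $\lambda_a$ or $\lambda_b$ at points of their boundary lying between the two connectors being moved. The three-cycle lemma is stated for a standalone configuration, so those extra attachments must be pushed aside first. I would handle this by moving every connector attached to a given cycle piece to one fixed end of that piece, for instance next to its last outgoing interval. This uses Lemma~\ref{lem: b3 connector relations}, which lets $b_3$ pass through $\mu$ and $\Delta$. Once the connectors are at that end, the extra attachments are disjoint from the local configuration.

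The degenerate cases also need care. These are cycles with $a_k=0$ or $a'_k=0$, where the connector is the modified composite of Remark~\ref{rem:connection}. For these I would rely on the remark in the proof of Lemma~\ref{lem: equivalence of connecting three cycles} that every connector composite in Figure~\ref{Fig:four-cases-connect-b3} admits a saddle representation, so that Proposition~\ref{saddle relations} reduces them to the generic case.
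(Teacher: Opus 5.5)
Your proposal is correct and follows the same route as the paper. You encode the connector composites as a labelled tree on the cycles, realise each $R$-move by Lemma~\ref{lem: equivalence of connecting three cycles}, and reach the star centred at $\lambda_1$ via Lemma~\ref{lem: tree equiv by local three vertex moves}. Your justification that the graph is a tree ($p-1$ edges, connected) and your isolation step for other connectors on the same cycles are details the paper's proof leaves implicit; it handles intervening pants only inside the proof of the three-cycle lemma. These additions fill in the argument rather than change it.
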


\begin{proof}
    Let $\Lambda = \{\lambda_1, \dots, \lambda_p\}$ be the collection of interval cycles of a connected tf-cobordism. Let $X = \{\{\lambda_1,\lambda_i\} \dots \{\lambda_j,\lambda_p\}\}$ be the collection of connected cycle pairs. Define a corresponding tree, $T$, where $V(T) = \Lambda$ and if $\{\lambda_c,\lambda_d\} \in X$, then $\{\lambda_c,\lambda_d\} \in E(T)$. Lemma \ref{lem: equivalence of connecting three cycles} corresponds exactly to $R$-moves defined in \ref{def: Defining R-moves} on $T$. By Lemma \ref{lem: tree equiv by local three vertex moves}, we can use Lemma \ref{lem: equivalence of connecting three cycles} on a grouping of three connected cycles to put $b_3$ connectors in normal form.
\end{proof}

\subsection{Minimal Cycle}\label{subsec:mincycle}

Lastly, we must put each minimal cycle in the form of Algorithm \ref{alg:cycle-to-gen}. Take a cycle with $a$ incoming intervals and $a'$ outgoing intervals. Let the incoming blocks of length greater than one have lengths $x_1 \dots x_t$ and let the outgoing blocks of length greater than one have lengths $x_1' \dots x_u'$.

\begin{algorithm}[Normalizing a minimal cycle]\label{alg:minimal-cycle}
Apply the following steps to place the required $\mu$'s and $\Delta$'s in each block.
\begin{enumerate}
\item If $\#\mu \neq \sum_{i=1}^t (x_i - 1)$, some block is missing a $\mu$.
\begin{enumerate}[label=(\alph*)]
\item Identify which block is missing a $\mu$, and find the $\mu$ attached to the previous block.
\item Identify the saddle point to which this $\mu$ is connected. 

\begin{enumerate}[label=--]
\item If the $\mu$ connects directly to the $\mu$ of the saddle, use relations 2, 3, and 6 to pull the $\mu$ past the saddle.
\item If the $\mu$ connects directly to the $\Delta$ of the saddle, use Proposition \ref{saddle relations} and Corollary \ref{cor: saddle representations} (according to which saddle is present) to flip the saddle so that the $\mu$'s are stacked together, placing outer twists on the left. Then apply relation 3 to pull the $\mu$ to the left until it reaches the $\Delta$ of the saddle, and apply relation 2 to pull the $\mu$ fully to the left side.
\end{enumerate}
\end{enumerate}
\item Following the symmetric process, if $\#\Delta \neq \sum_{i=1}^u (x_i' - 1)$, identify which block is missing a $\Delta$ and pull the associated $\Delta$ from the previous block in the cycle.
\end{enumerate}
\end{algorithm}

Notice that this leaves all non-block associated $\mu$'s and $\Delta$'s as saddles. Apply Proposition \ref{saddle relations} and Corollary \ref{cor: saddle representations} to reduce to $S_1$ and $S_2$.

\medskip

This completes Stage 7 of the reduction algorithm, and with it the proof of the
sufficiency theorem stated in the introduction. Combined with the normal form
theorem of Section~\ref{sec:normalform}, this gives the complete
generators-and-relations presentation of $\TFS$ from
Section~\ref{Sec:category-description}: two composites of $\iota, \varepsilon, \mu, \Delta, P$
represent the same morphism in $\TFS$ if and only if they are related by a
finite sequence of the relations of Figure~\ref{Fig:relations}, and $\TFS$ is
exhibited as the symmetric monoidal category freely generated by this
algebraic structure. 

\newpage
\appendix
\section{Details on relation moves }

\subsection{Moves in the proof of Proposition \ref{saddle relations}}

In Figure \ref{Fig:saddle-rep-sequence} we show a sequence of relations that prove the equivalence of two different representations of the saddle. The colors highlight which part of the figure is changing at each step.

    \begin{center}
      \includegraphics[width=0.75\textwidth]{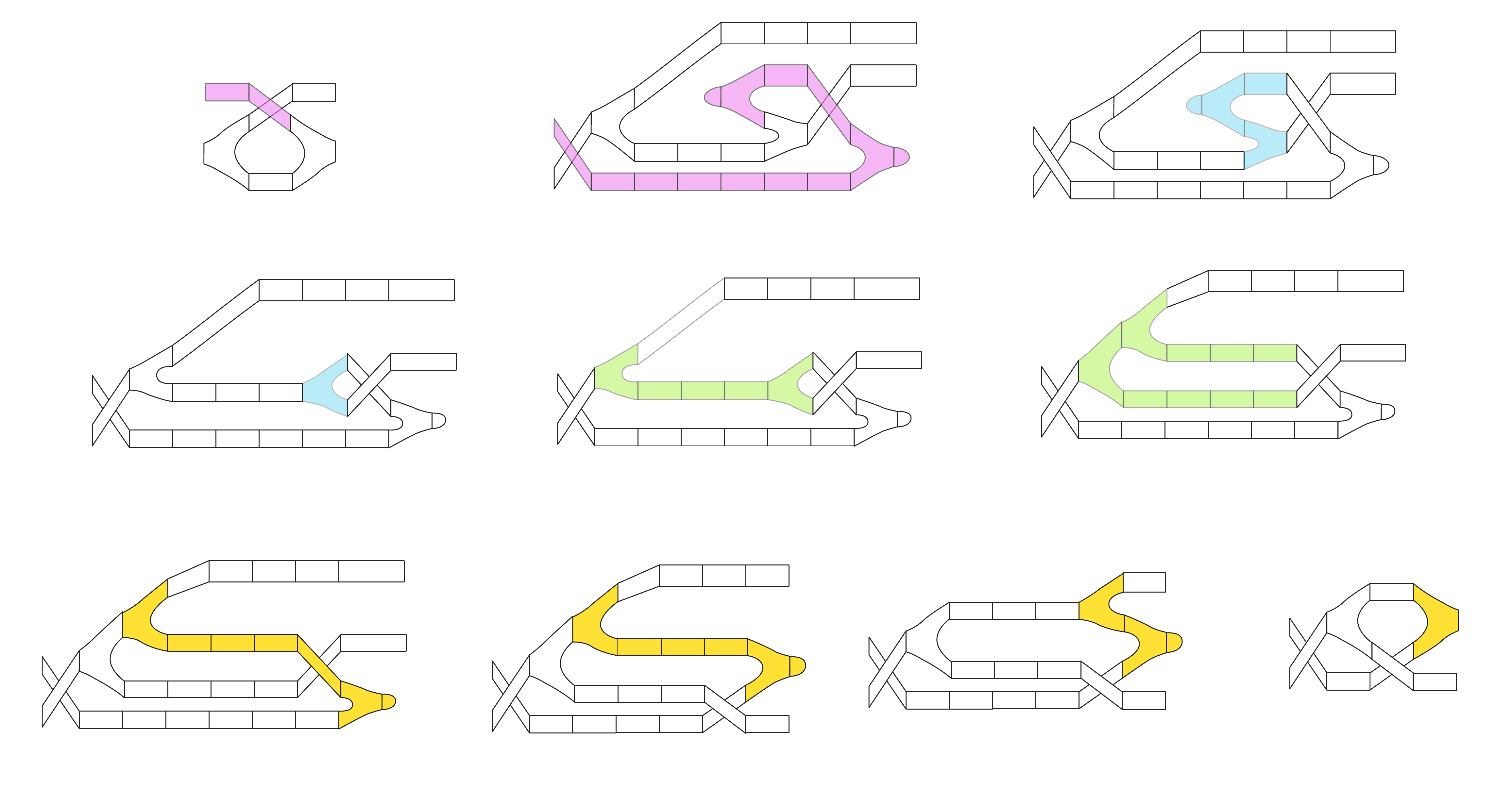}
      \captionof{figure}{Sequence of relations between two representations of the saddle\label{Fig:saddle-rep-sequence}}
    \end{center}


\subsection{Moves from the proof of Lemma \ref{$b_2$ relation}}
In Figure \ref{Fig:genus-relation} we show a sequence of relations that prove the equivalence of two different representations of the genus. 
The colors highlight which part of the figure is changing at each step.

    \begin{center}
      \includegraphics[width=0.85\textwidth]{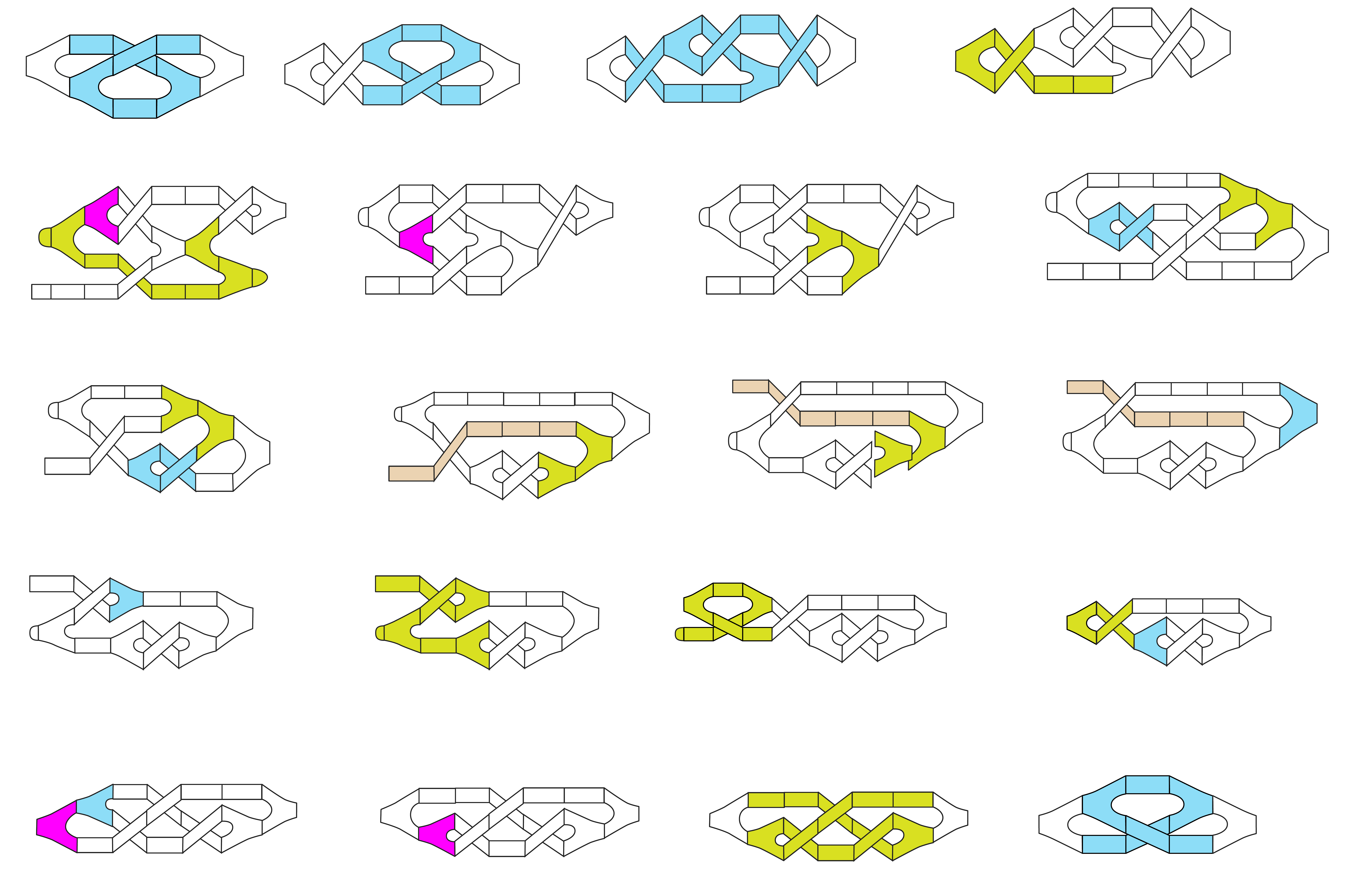}
      \captionof{figure}{Sequence of relations between two representations of the genus\label{Fig:genus-relation}}
    \end{center}

\newpage

\subsection{Figures referred to in the proof of Lemma \ref{lem: equivalence of connecting three cycles}} In Figures \ref{fig: cycle connection equiv 1} and \ref{fig: cycle connection equiv 2} we show a sequence of relations that prove the equivalence of the three different ways to connect three cycles. The colors highlight which components change at each step.

\begin{center}\label{fig: cycle connection equiv 1}
        \includegraphics[width=1.1\linewidth]{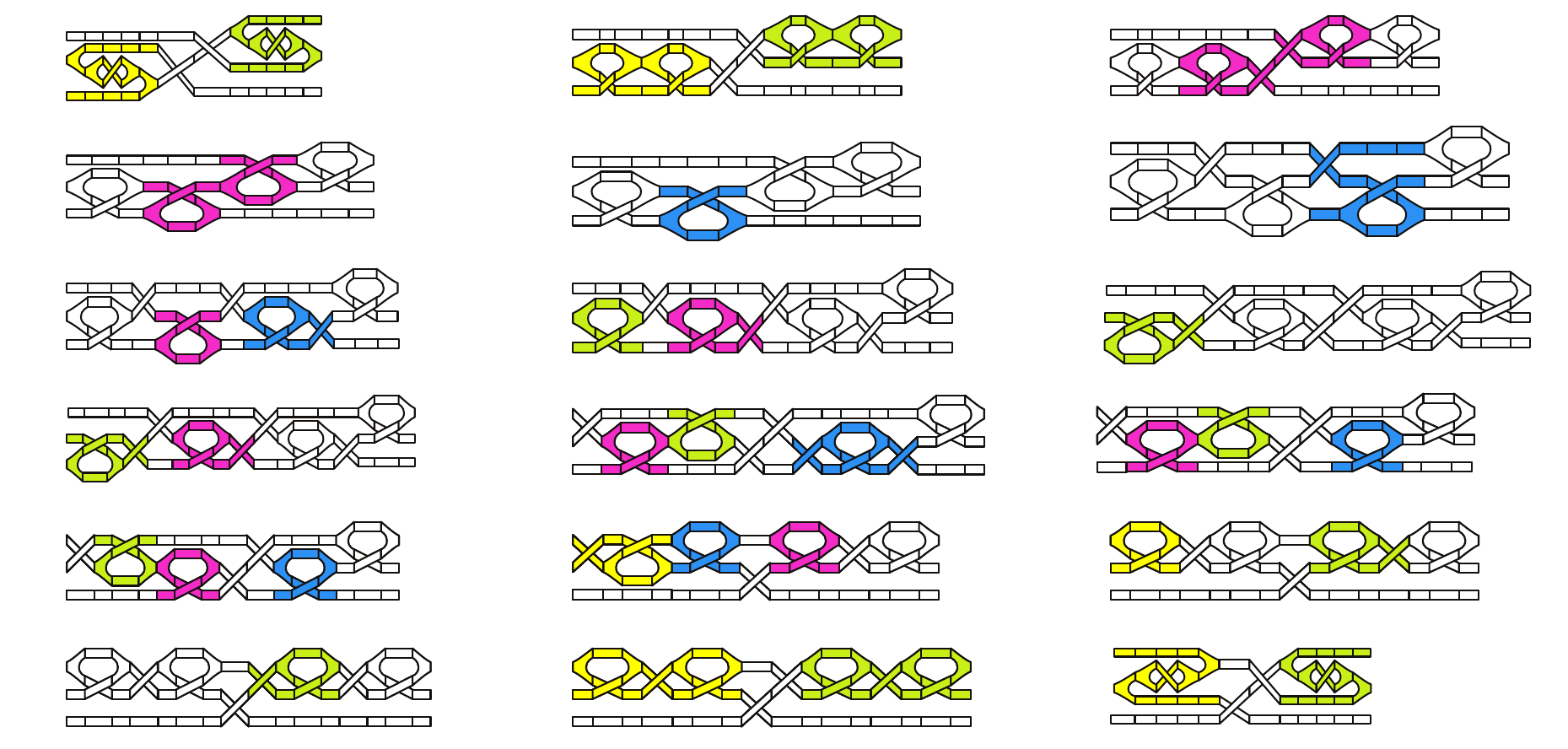}
    \captionof{figure}{}
    \label{fig: cycle connection equiv 1}
\end{center}

\begin{center}\label{fig: cycle connection equiv 2}
        \includegraphics[width=1\linewidth]{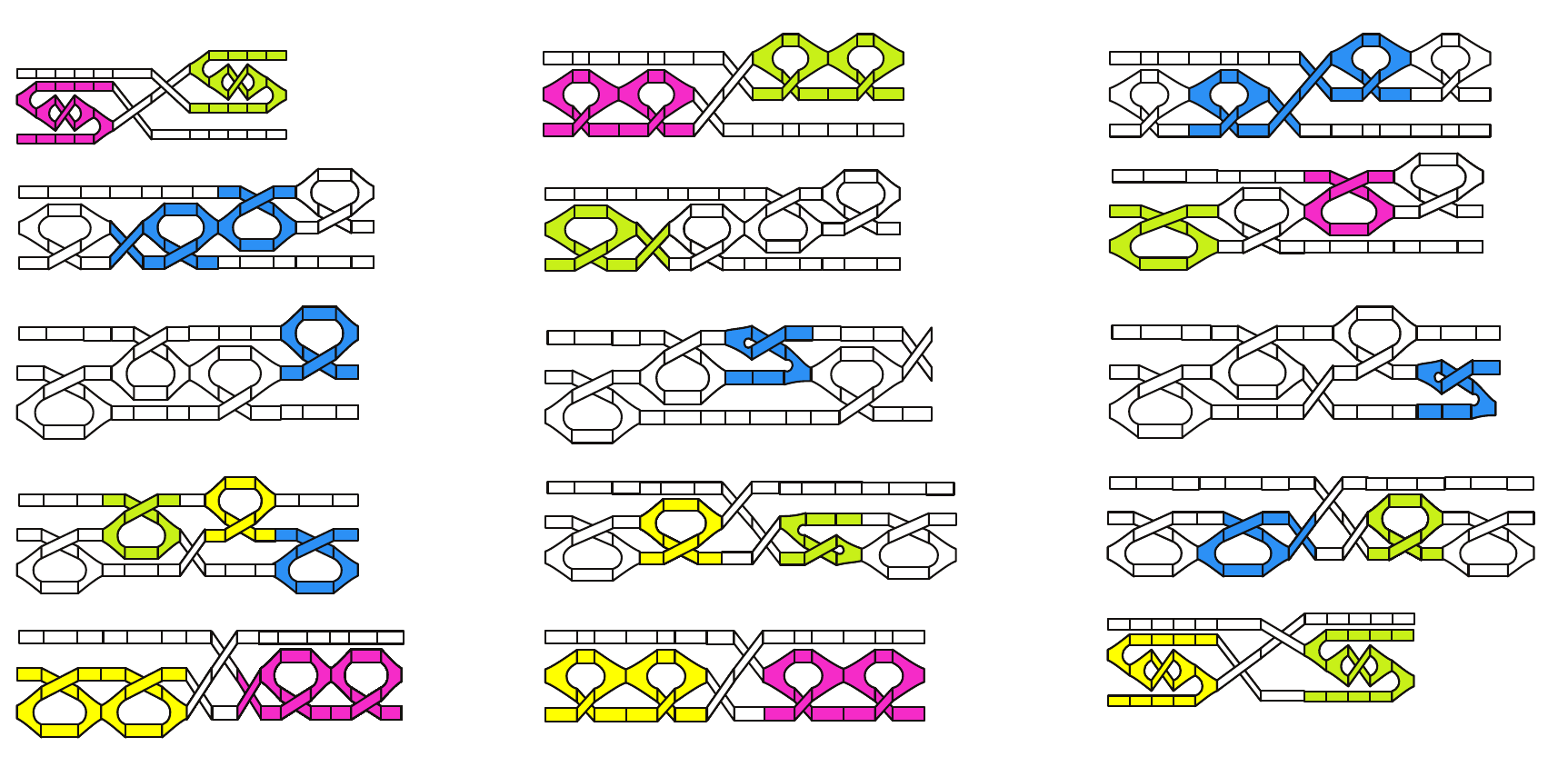}
     \captionof{figure}{}
     \label{fig: cycle connection equiv 2}
\end{center}

\newpage

\bibliographystyle{amsalpha}
{\footnotesize\bibliography{Normal/bib}}

\end{document}